%
%
%
%
%
\documentclass[letterpaper,12pt]{amsart}
\textwidth=16.00cm 
\textheight=22.00cm 
\topmargin=0.00cm
\oddsidemargin=0.00cm 
\evensidemargin=0.00cm 
\headheight=0cm 
\headsep=0.5cm
\textheight=630pt

%
%
\usepackage{graphicx}
\usepackage{latexsym}
\usepackage{array,delarray}
\usepackage{amsmath,amssymb, epsfig,verbatim}
\usepackage{color}
\usepackage{hyperref}
\usepackage{tikz}
\usetikzlibrary{matrix,arrows}

%
\newtheorem{thm}{Theorem}[section]
\newtheorem{lem}[thm]{Lemma}
\newtheorem{prop}[thm]{Proposition}
\newtheorem{cor}[thm]{Corollary}

\newtheorem{defn}[thm]{Definition}
\newtheorem{ex}[thm]{Example}

\newtheorem{rmk}[thm]{Remark}




\newcommand{\rr}{\mathbb{R}}
\newcommand{\cc}{\mathbb{C}}
\newcommand{\kk}{\mathbb{K}}



\newcommand{\calc}{\mathcal{C}}

\newcommand{\cm}{\mathcal{M}}


\newcommand{\rank}{\operatorname{rank}}

\newcommand{\ind}{\mathrel{\mbox{$\perp \kern-5.5pt \perp$}}}
\newcommand{\succdot}{\mathrel{\cdot \kern-0.6em >}}
\newcommand{\bi}{\leftrightarrow}

\newcommand{\newword}{\textbf}

\makeatletter
\renewcommand*\env@matrix[1][*\c@MaxMatrixCols c]{%
 \hskip -\arraycolsep
  \let\@ifnextchar\new@ifnextchar
  \array{#1}}
\makeatother

\setcounter{tocdepth}{1}
%
%
%
\begin{document}

\title{Matrix Schubert varieties and\\ Gaussian conditional independence models
}


\author{Alex Fink  \and Jenna Rajchgot \and Seth Sullivant 
}




\maketitle

\begin{abstract}
Matrix Schubert varieties are certain varieties in the affine space of 
square matrices which are determined by specifying rank conditions
on submatrices.  
We study these varieties for generic matrices, symmetric matrices,
and upper triangular matrices in view of two applications to algebraic
statistics: we observe that special conditional independence models for Gaussian
random variables are intersections of matrix Schubert varieties in the
symmetric case. Consequently, we obtain a combinatorial primary decomposition
algorithm for some conditional independence ideals. We also characterize the vanishing ideals of Gaussian graphical
models for generalized Markov chains.

In the course of this investigation, we are led to consider three related stratifications, which come from the 
Schubert stratification of a flag variety. We provide some combinatorial results, including describing the stratifications using the language of rank arrays and enumerating the strata in each case.

\end{abstract}

\tableofcontents

\section{Introduction}

An $m$-dimensional Gaussian random vector $X = (X_{1}, \ldots, X_{m})
 \sim  \mathcal{N}(\mu, \Sigma)$
has its conditional independence structure completely determined
by rank conditions on its covariance matrix $\Sigma$, which is an $m\times m$ symmetric positive definite matrix.
For a subset $A$ of $[m]:=\{1,2,\dots, m\}$, let $X_{A}  = (X_a)_{a \in A}$ be the subvector
of $X$ indexed by $A$.  For $A, B \subseteq [m]$ let $\Sigma_{A,B}$
denote the submatrix of $\Sigma = (\sigma_{i,j})_{i,j \in [m]}$
with row index set $A$ and column index set $B$; that is
$\Sigma_{A,B}  = (\sigma_{a,b})_{a \in A, b \in B}$. 

\begin{prop}\label{prop:GaussianCI}
Let $X \sim \mathcal{N}(\mu, \Sigma)$.  Then for disjoint subsets
$A,B,C \subseteq [m]$, the 
conditional independence statement $X_{A} \ind X_{B} \mid  X_{C}$ (read 
``$X_A$ is independent of $X_B$ given $X_C$'')
holds if and only if
\begin{equation}\label{eq:rank}
{\rm rank} \,  \Sigma_{A \cup C,  B \cup C}  =  \# C.
\end{equation}
\end{prop}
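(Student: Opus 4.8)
The plan is to reduce the proposition to the classical Schur-complement description of the conditional distribution of a Gaussian vector, and then to reinterpret the resulting identity as a rank drop.

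First I would recall two standard facts. (i) If $X \sim \mathcal{N}(\mu,\Sigma)$ and $C \subseteq [m]$, then $\Sigma_{C,C}$ is invertible, being a principal submatrix of the positive definite matrix $\Sigma$, and the conditional distribution of $X_{[m]\setminus C}$ given $X_C$ is Gaussian with covariance matrix the Schur complement $\Sigma_{D,D} - \Sigma_{D,C}\,\Sigma_{C,C}^{-1}\,\Sigma_{C,D}$, where $D = [m]\setminus C$. (ii) Jointly Gaussian random vectors are independent if and only if their cross-covariance block vanishes. Since $A, B, C$ are disjoint we have $A, B \subseteq D$, so extracting the rows indexed by $A$ and the columns indexed by $B$ from the conditional covariance matrix in (i) gives the conditional cross-covariance of $X_A$ and $X_B$; by (ii), therefore, $X_A \ind X_B \mid X_C$ holds if and only if
\[
  \Sigma_{A,B} \;=\; \Sigma_{A,C}\,\Sigma_{C,C}^{-1}\,\Sigma_{C,B}. \qquad (\star)
\]

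It then remains to show that $(\star)$ is equivalent to $\rank \Sigma_{A\cup C,\,B\cup C} = \#C$. Ordering the rows of $\Sigma_{A\cup C,\,B\cup C}$ as $A$ followed by $C$ and its columns as $B$ followed by $C$ yields the block form $\left(\begin{smallmatrix}\Sigma_{A,B} & \Sigma_{A,C}\\ \Sigma_{C,B} & \Sigma_{C,C}\end{smallmatrix}\right)$. Since $\Sigma_{C,C}$ is invertible, multiplying this matrix on the left by $\left(\begin{smallmatrix} I & -\Sigma_{A,C}\Sigma_{C,C}^{-1}\\ 0 & I\end{smallmatrix}\right)$ and on the right by $\left(\begin{smallmatrix} I & 0\\ -\Sigma_{C,C}^{-1}\Sigma_{C,B} & I\end{smallmatrix}\right)$ — both invertible, hence rank-preserving — produces the block-diagonal matrix with blocks $\Sigma_{A,B} - \Sigma_{A,C}\Sigma_{C,C}^{-1}\Sigma_{C,B}$ and $\Sigma_{C,C}$, so that
\[
  \rank \Sigma_{A\cup C,\,B\cup C} \;=\; \#C \;+\; \rank\bigl(\Sigma_{A,B} - \Sigma_{A,C}\Sigma_{C,C}^{-1}\Sigma_{C,B}\bigr).
\]
This equals $\#C$ exactly when $(\star)$ holds, which finishes the argument.

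The real content is entirely in facts (i) and (ii), which are classical, so I do not expect a genuine obstacle; the only care needed is the bookkeeping of the block decomposition and the observation that positive definiteness of $\Sigma$ is precisely what makes $\Sigma_{C,C}$ invertible, so that the Schur complement is defined and the elimination legitimate. If one prefers to avoid quoting the conditional-covariance formula, $(\star)$ also follows directly from the rank condition: $\rank \Sigma_{A\cup C,\,B\cup C} \ge \#C$ always, since $\Sigma_{C,C}$ is a nonsingular submatrix; when equality holds the $\#C$ independent rows indexed by $C$ span the row space, so each row indexed by $A$ is a linear combination of them, comparison of the $C$-columns forces the coefficient matrix to equal $\Sigma_{A,C}\Sigma_{C,C}^{-1}$, and comparison of the $B$-columns then gives $(\star)$.
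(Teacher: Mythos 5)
Your argument is correct, and it is the standard derivation: conditional independence for jointly Gaussian vectors is equivalent to the vanishing of the conditional cross-covariance $\Sigma_{A,B}-\Sigma_{A,C}\Sigma_{C,C}^{-1}\Sigma_{C,B}$, and the block elimination using the invertibility of $\Sigma_{C,C}$ identifies $\rank \Sigma_{A\cup C, B\cup C}$ with $\#C$ plus the rank of that Schur-type complement, so the two conditions match. Note, however, that the paper does not prove this proposition at all: it explicitly takes the rank condition as the working definition of $X_A \ind X_B \mid X_C$ and refers the reader to the cited reference (Drton--Sturmfels--Sullivant) for the derivation, which proceeds essentially as you do. Two small points worth making explicit if you write this up: the conditional covariance of $(X_A,X_B)$ given $X_C=x_C$ does not depend on $x_C$, which is what lets you conclude conditional independence (for all conditioning values) from a single matrix identity; and the case $C=\emptyset$ is covered trivially, since then the statement reduces to $\Sigma_{A,B}=0$, i.e.\ rank zero. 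Your closing remark that the rank condition alone forces $(\star)$ without quoting the conditional-covariance formula is also correct, though one still needs facts (i) and (ii) to connect $(\star)$ to probabilistic independence.
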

For the purposes of this paper,  Proposition \ref{prop:GaussianCI}
can be taken as the definition of the conditional independence
statement  $X_{A} \ind X_{B} \mid  X_{C}$, since this is the only
fact we will need in our study.  Note that we often
use $A \ind B \mid  C$ to denote the conditional independence statement
 $X_{A} \ind X_{B} \mid  X_{C}$.  A precise definition of conditional
independence structures and the derivation of Proposition
\ref{prop:GaussianCI} can be found in \cite{Drton2009}.
Since $\Sigma$ is a positive definite matrix, the
condition (\ref{eq:rank}) can be replaced with 
\begin{equation}\label{eq:determinants}
{\rm rank} \,  \Sigma_{A \cup C,  B \cup C}  \leq  \# C.
\end{equation}
without changing the resulting matrices that arise. Consequently, it is natural to use commutative algebra, in particular determinantal varieties, to study the conditional independence structure of a Gaussian random vector.

An important problem in the abstract theory of conditional
independence is to understand when collections of conditional independence
constraints imply other constraints.
For example, for Gaussian random variables, the
two constraints $X_1 \ind X_3$ and $X_1 \ind X_3 \mid  X_2$ imply that
either $X_1 \ind (X_2, X_3)$  or $(X_1, X_2) \ind X_3$  (see Example
\ref{ex:elementary} for a derivation).  While a complete
understanding of such implications is probably impossible
\cite{Sullivant2009}, one hopes that the study of determinantal
varieties and their intersections might shed light upon
conditional independence implications. 

In this direction, one associates to each conditional independence statement $A \ind B \mid  C$  the
\textbf{conditional independence ideal}
$$
J_{A \ind B\mid C}  =  \langle \#C +1 \mbox{ minors of } \Sigma_{A\cup C, B \cup C} \rangle  \subseteq \cc[\Sigma]
$$
and to a list of conditional independence statements, 
$$
\calc =  \{A_{1} \ind B_{1} \mid  C_{1}, A_{2} \ind B_{2} \mid  C_{2} , \cdots  \}
$$
the ideal
$$
J_{\calc}  =  J_{A_{1} \ind B_{1}\mid C_{1}}  + J_{A_{2} \ind B_{2}\mid C_{2}}  + \cdots.
$$
The set of conditional independence statements $\calc$ for Gaussian random variables will imply the conditional independence statement $A \ind B \mid  C$
if 
$$
J_{A\ind B \mid C}  \subseteq  \sqrt{ J_{\calc}}.
$$
Many implications have the form of a disjunction; i.e.~a collection
of conditional independence statements $\calc$ implies that one in a
list of other statements must hold.  Such implications are detected by
computing the primary decomposition of $\sqrt{J_{\calc}}$. 
Given the inherent difficulty of computing primary decompositions
of general ideals, it is natural to look for subfamilies of conditional
independence statements where the ideals $J_{\calc}$ are
radical and the primary decomposition is easy to compute.

In the present paper, we restrict to a family of conditional independence
ideals which define analogs of Fulton's matrix Schubert varieties (see \cite{Fulton}, \cite{Knutson2005}) for symmetric matrices. These \textbf{symmetric matrix Schubert varieties} are indexed by permutations in the symmetric group $S_m$ and are obtained by imposing rank conditions on North-East justified submatrices. These varieties are all reduced, and primary decomposition of their radical defining ideals can be computed using the combinatorics of Bruhat order on $S_m$. Consequently, we obtain a combinatorial algorithm for decomposing conditional independence ideals when we restrict to certain families of ``North-East'' conditional independence constraints.

A second and related problem concerns finding the vanishing ideals of 
\textbf{Gaussian graphical models}.  Specifically, let
$G = ([m], B,D)$ be a mixed graph with $B$ denoting
a set of bidirected edges $i \bi j$ and $D$ denoting a
set of directed edges $i \to j$.  
(This structure can be seen as an undirected graph $([m],B)$
and a directed graph $([m],D)$ sharing a vertex set.)
For the specific results of the present paper, we
will restrict to the case that the set of directed edges forms
a directed acyclic graph, i.e.\ there are no directed cycles.
This means we can reorder the vertices in such a way that
if there is an edge $i \to j \in D$ then $i < j$.

For each  edge $i \to j \in D$ introduce a parameter $\lambda_{ij} \in \mathbb{R}$.
Let $\Lambda$ be the $m \times m$ matrix such that
$$
\Lambda_{ij}  =  \left\{  \begin{array}{cl}
\lambda_{ij}  &  \mbox{ if }  i \to j \in D  \\
0  & \mbox{ otherwise}.
\end{array}  \right.
$$ 
Let $\mathbb{R}^{D}$ denote the set of all such matrices $\Lambda$.
Let $PD_m$ denote the set of $m \times m$ symmetric positive definite
matrices.  Let 
$$
PD(B) :=  \{  \Omega \in PD_m :   \Omega_{ij} = 0 \mbox{ if } i \neq j \mbox{ and }
i \bi j  \notin B  \}.
$$

The Gaussian random vector associated to the mixed graph $G$ with  $\Lambda \in \rr^D$
and $\Omega \in PD(B)$ has $X \sim \mathcal{N}(\mu, \Sigma)$ where
$$
\Sigma  =  (I - \Lambda)^{-T} \Omega (I - \Lambda)^{-1}
$$
and $A^{-T}$ is shorthand for $(A^{-1})^{T}$. 

The Gaussian graphical model associated to $G$ is the set of
such positive definite covariance matrices that can arise:
$$
\mathcal{M}_{G}  =  \{  \Sigma = (I - \Lambda)^{-T} \Omega (I - \Lambda)^{-1}:
\Lambda \in \mathbb{R}^{D},  \Omega \in PD(B) \}. 
$$
One studies the vanishing ideals of these models, denoted 
$$
J_{G}  =  \langle  f \in \mathbb{R}[\Sigma]:  f(\Sigma) = 0 \mbox{ for all } \Sigma \in 
\mathcal{M}_{G}  \rangle.
$$
For general mixed graphs $G$, we currently know no explicit list of ideal generators
of $J_{G}$. In this paper, we find an explicit list of generators in the case where $G$ is a \textbf{generalized Markov chain} (see Definition \ref{def:markovChain}). 
To do this, we identify the varieties $\mathbb{V}(J_{G})$ with symmetric matrix Schubert varieties. This is done by comparing parametrizations for Gaussian graphical models with parametrizations for symmetric matrix Schubert varieties (see Theorem \ref{thm:generalized}).


%

Symmetric matrix Schubert varieties play an important role in both of the above-described statistics problems, and so a substantial portion of this paper is devoted to the study of these varieties and related varieties. We prove results about these varieties that are interesting for their own sake, and not directly related to the conditional independence applications. The paper is structured as follows:

\begin{itemize}
\item In Section \ref{sect:flag}, we consider three spaces of matrices: generic $m\times m$ matrices,
$m\times m$ symmetric matrices, and $m\times m$ upper triangular matrices. 
Each of these spaces of matrices has a stratification naturally induced by the Schubert stratification of an appropriate flag variety. 
The determinantal defining ideal of each closed stratum is obtained by imposing rank conditions on North-East justified submatrices, South-West justified submatrices, and on vertical or horizontal bands of entries, and we write down the prime ideal defining each closed stratum. We also show that each poset of strata, ordered by inclusion, is isomorphic to a type $A$ or $C$ Bruhat interval (see Proposition \ref{prop:stratification}). This material follows naturally from well-known facts about Schubert varieties. We provide a complete explanation so that the reader need not be familiar with the theory of Schubert varieties. 
\item In Section \ref{sect:invertible}, we turn our attention to matrix Schubert varieties. 
These are well-understood in the setting of generic matrices (see \cite{Fulton}, \cite{Knutson2005}): in this case, the defining ideals are prime, the natural generators form a Gr\"obner basis for any diagonal term order, and the intersection of a collection of matrix Schubert varieties is a reduced union of others. We show that the same results hold for matrix Schubert subvarieties of upper triangular\footnote{The upper triangular Gr\"obner basis result can also be obtained by appealing to \cite{WooYongGrobner}.} and symmetric matrices. A key ingredient in our proof is to identify each matrix Schubert variety with a stratum from Section \ref{sect:flag} in order to show that the determinantal defining ideals are prime -- this is one of our main reasons for involving the more general stratification from Section \ref{sect:flag}.
We then give parametrizations of matrix Schubert varieties for symmetric matrices; our parametrizations are analogous to the 
 familiar parametrizations of (generic and upper triangular) matrix Schubert varieties by Chevalley generators (see Propositions \ref{prop:param up}, \ref{prop:param sym}).
\item Our two applications of symmetric matrix Schubert varieties to Gaussian conditional independence appear in Section \ref{sect:applications}. 
\item Though not necessary for our statistics applications, we end the paper with a further analysis of the combinatorics of the three stratifications from Section \ref{sect:flag}. 
Here we describe each defining determinantal ideal, and sums thereof, using certain rank arrays (see e.g.~\cite{Fulton}, \cite{Knutson2005}, \cite{KLS-positroid} for rank arrays in similar settings). 
We also enumerate
the ideals that can appear by permutations with restricted
positions, and give formulas for the number of these components
for generic matrices, symmetric matrices, and upper triangular matrices
(Propositions \ref{prop:Stirling}, \ref{prop:Genocchi}).
A surprise in the upper triangular case is that the number of 
components is enumerated by the median Genocchi numbers.
\end{itemize}

\begin{rmk}
Preliminary versions of this paper made significant use of the theory of Frobenius splitting, and many of our proofs can be rephrased in that language. The interested reader may see \cite[Chapter 2]{BrionKumar}, \cite{KLS}, and \cite{Knutson2009} for relevant related material on Frobenius splitting.
\end{rmk}


\section{Stratifying three spaces of matrices}\label{sect:flag}

Throughout the paper, we let $\mathbb{K}$ be a field. We focus on the following three spaces of matrices: $n\times n$ matrices $\textrm{Mat}_n(\mathbb{K})$, $n\times n$ symmetric matrices $\textrm{Sym}_n(\mathbb{K})$, and $n\times n$ upper triangular matrices, which we denote by $\textrm{Up}_n(\mathbb{K})$. In this expository section, we describe the three stratifications, one for each space of matrices, that are relevant for this paper. The stratifications are obtained by identifying each space of matrices with a Schubert cell or Schubert cell intersected with an opposite Schubert variety, and then stratifying by opposite Schubert varieties.
We begin by setting up some conventions and notation:

\begin{itemize}
\item Let $J_{n}$ be the $n\times n$ matrix with ones along the antidiagonal and zeros elsewhere, let $0_n$ be an $n\times n$ zero matrix, and let $X$, $\Sigma$, and $Y$ be $n\times n$ generic, symmetric, and upper triangular matrices of indeterminates. Define the following $2n\times 2n$ matrices:
\begin{equation}
\widetilde{X} := \begin{bmatrix} J_{n} & X \\ 0_n & J_n \end{bmatrix}, \>\> \widetilde{\Sigma} := \begin{bmatrix} J_n & \Sigma \\ 0_n & J_n \end{bmatrix}, \>\> \widetilde{Y} := \begin{bmatrix} J_n & Y \\ 0_n & J_n \end{bmatrix}. 
\end{equation} 
Let $x_{ij}$, $y_{ij}$, and $\sigma_{ij}$ be the $(i,j)$-entries of the matrices $X$, $Y$, and~$\Sigma$, respectively.
\item Let $v\in S_{2n}$, and let $w_0$ denote the longest word in $S_{2n}$, expressed in one line notation by $2n(2n-1)\cdots 321$. We let $P(v)$ denote the \textbf{permutation matrix} of $w_0v$, so that $P(v)$ is the matrix which has a $1$ in position $(i,j)$ if and only if $w_0v(i) = j$. 
Though this notation is non-standard, we felt it the best choice for our purposes.
\item Let $w_{\square}\in S_{2n}$ denote the ``square word'' permutation which is expressed in one-line notation by $(n+1)\cdots(2n)1\cdots n$, and let $w_{\rm up}\in S_{2n}$ denote the permutation expressed in one-line notation by $12\cdots n(2n)(2n-1)\cdots(n+1)$. Observe that 
\begin{equation}\label{eqn:permutations}
P(w_{\square}) = \begin{bmatrix}J_n&0_n\\0_n&J_n\end{bmatrix},\> \> P(w_{\rm up}) = \begin{bmatrix}0_n&J_n\\I_n&0_n \end{bmatrix}.
\end{equation}
\end{itemize}

Our three stratifications are explicitly described in the following proposition:

\begin{prop}\label{prop:stratification}
\begin{enumerate}
\item There is a prime ideal in $\mathbb{K}[X]$ associated to each $v$ in the Bruhat interval $[1,w_{\square}]\subseteq S_{2n}$ defined by
\[I_{\rm full}(v):=\langle \text{minors of size } (1+\rank P(v)_{[1,i], [j,2n]}) \textrm{ in } \widetilde{X}_{[1,i],[j,2n]} \mid i,j\in [2n]\rangle.
\]
The sum of any two of these ideals is radical and is an intersection of others of the same type, and the poset of these ideals, ordered by inclusion, is isomorphic to the Bruhat interval $[1,w_{\square}]\subseteq S_{2n}$.
\item 
Let $C_{n}$ denote the set of permutations in $S_{2n}$ which satisfy the following condition: if $v = a_1\dots a_{2n}$ in one-line notation, then $a_{i}+a_{2n+1-i} = 2n+1, 1\leq i\leq 2n$. 
There is a prime ideal in $\mathbb{K}[\Sigma]$ associated to each $v\in C_{n}\cap [1,w_{\square}]$ defined by
\[I_{\rm sym}(v):=\langle \text{minors of size } (1+\rank P(v)_{[1,i], [j,2n]}) \textrm{ in } \widetilde{\Sigma}_{[1,i],[j,2n]} \mid i,j\in [2n]\rangle.\]
The sum of any two of these ideals is radical and is an intersection of others of the same type, and the poset of these ideals, ordered by inclusion, is isomorphic to the type $C$ Bruhat interval $[1,w_{\square}]\cap C_n$.
\item There is a prime ideal in $\mathbb{K}[Y]$ associated to each $v$ in the Bruhat interval $[w_{\rm up}, w_{\square}]\subseteq S_{2n}$ defined by:
\[I_{\rm up}(v):=\langle \text{minors of size } (1+\rank P(v)_{[1,i], [j,2n]}) \textrm{ in } \widetilde{Y}_{[1,i],[j,2n]} \mid i,j\in [2n]\rangle.\]
The sum of any two of these ideals is radical and is an intersection of others of the same type, and the poset of these ideals, ordered by inclusion, is isomorphic to the Bruhat interval $[w_{\rm up},w_{\square}]\subseteq S_{2n}$.
\end{enumerate}
\end{prop}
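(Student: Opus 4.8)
The plan is to deduce all three parts from standard facts about Schubert varieties in the flag varieties of types $A$ and $C$: I would realize each of $\textrm{Mat}_n(\mathbb{K})$, $\textrm{Sym}_n(\mathbb{K})$, $\textrm{Up}_n(\mathbb{K})$ as a Schubert cell (or as the dense open stratum of a Richardson variety), realize each closed stratum as the trace on it of an opposite Schubert variety, and then translate. First I would record the elementary facts that $\det\widetilde{X}=(\det J_n)^2\neq0$, so $\widetilde{X}\in GL_{2n}(\mathbb{K})$; that $\widetilde{\Sigma}$ preserves the standard symplectic form on $\mathbb{K}^{2n}$, so $\widetilde{\Sigma}\in Sp_{2n}(\mathbb{K})$ (a one-line computation using $\Sigma=\Sigma^{T}$ and $J_n=J_n^{T}$); and that $\widetilde{Y}\in GL_{2n}(\mathbb{K})$. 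Next I would consider the map $\iota$ sending $X$ to the complete flag $R_1\subset R_2\subset\cdots\subset R_{2n}=\mathbb{K}^{2n}$ whose $p$-th term $R_p$ is spanned by the first $p$ rows of $\widetilde{X}$. The one-line notations fixed in \eqref{eqn:permutations} are exactly those making $\iota$ an isomorphism onto the $w_{\square}$-Schubert cell (for a suitable pair of opposite reference flags) and making the opposite Schubert variety $X^{v}$, for $v\in S_{2n}$, meet that cell in the locus cut out by the rank conditions $\rank\widetilde{X}_{[1,i],[j,2n]}\le\rank P(v)_{[1,i],[j,2n]}$; this is just the identity $\dim\bigl(R_i\cap\langle e_1,\dots,e_{j-1}\rangle\bigr)=i-\rank\widetilde{X}_{[1,i],[j,2n]}$, which converts incidence conditions on the flag into rank conditions on North-East submatrices of $\widetilde{X}$. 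Thus $\mathbb{V}(I_{\rm full}(v))$ is identified with $X^{v}$ intersected with the open cell. The same construction with $\widetilde{\Sigma}$ inside the type $C$ flag variety $Sp_{2n}(\mathbb{K})/(B\cap Sp_{2n})$ identifies $\textrm{Sym}_n(\mathbb{K})$ with the type $C$ $w_{\square}$-cell, using that the type $C$ Weyl group is precisely $C_n\subseteq S_{2n}$ and that for $v\in C_n$ the rank array $\rank P(v)$ encodes the symplectic incidence conditions; and restricting $\iota$ to $\textrm{Up}_n(\mathbb{K})$ identifies it with the dense open stratum of the Richardson variety obtained by intersecting $X^{w_{\rm up}}$ with the closure of the $w_{\square}$-cell, so that $\mathbb{K}[Y]$ becomes its coordinate ring and $I_{\rm up}(v)$ the image there of $I_{\rm full}(v)$.

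Granting these dictionaries, the three assertions become statements about opposite Schubert varieties. A Schubert variety is reduced and irreducible; it meets the open $w_{\square}$-cell in a scheme that is nonempty exactly when its index lies in the Bruhat interval below $w_{\square}$ (the standard nonemptiness criterion for Richardson varieties), which after restriction to the relevant index sets is exactly $v\in[1,w_{\square}]$, $v\in[1,w_{\square}]\cap C_n$, and $v\in[w_{\rm up},w_{\square}]$; and when nonempty that scheme is a dense open subscheme of the corresponding (integral) Richardson variety, hence is itself integral. Therefore each $I_{\rm full}(v)$, $I_{\rm sym}(v)$, $I_{\rm up}(v)$ is prime, provided one knows that the displayed minors generate the full ideal of the Schubert variety and not merely cut out its underlying set. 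For a pair of indices $v,v'$, the scheme $\mathbb{V}(I_{\rm full}(v)+I_{\rm full}(v'))$ is the trace on the cell of the scheme-theoretic intersection $X^{v}\cap X^{v'}$; since $X^{v}$ and $X^{v'}$ are closed and $B^{-}$-stable, their set-theoretic intersection is the union of the opposite Schubert varieties contained in both, namely $\bigcup\{X^{u}:u\ge v,\ u\ge v'\}$ (using $X^{u}\subseteq X^{v}\iff u\ge v$), and this intersection is in fact reduced; hence $I_{\rm full}(v)+I_{\rm full}(v')$ is radical and equals $\bigcap\{I_{\rm full}(u):u\ge v,\ u\ge v',\ u\le w_{\square}\}$ (a common upper bound not below $w_{\square}$ contributes the unit ideal; in the upper triangular case a common upper bound is automatically $\ge w_{\rm up}$). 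Finally $I_{\rm full}(v)\subseteq I_{\rm full}(v')$ if and only if $X^{v}\supseteq X^{v'}$ if and only if $v\le v'$, which gives the poset isomorphism onto $[1,w_{\square}]$; for the symmetric case one adds the standard fact that the type $C$ Bruhat order on $C_n$ is the restriction of the Bruhat order of $S_{2n}$, and for the upper triangular case one restricts to $[w_{\rm up},w_{\square}]$.

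The substantive inputs are two classical facts, to be cited in each type rather than reproved. First: a Schubert variety is defined scheme-theoretically — not just set-theoretically — by its incidence conditions; in type $A$ this is Fulton's theorem on matrix Schubert varieties \cite{Fulton} (see also \cite{Knutson2005}), and in general it follows from standard monomial theory or from Frobenius splitting (see, e.g., \cite[Ch.~2]{BrionKumar}), the type $C$ version taking care of $\textrm{Sym}_n(\mathbb{K})$. Second: all opposite Schubert varieties in a fixed flag variety are compatibly Frobenius split, so that their scheme-theoretic intersections are reduced — equivalently, finite sums of the radical ideals $I_{\rm full}(v)$ stay radical — which in type $A$ one may alternatively deduce from the antidiagonal Gröbner degeneration and the shellability of subword complexes of \cite{Knutson2005}.

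I expect the main obstacle to be not a single deep theorem but making the dictionary above precise and uniform across the three cases: verifying that the array $\rank P(v)$ records exactly the incidence conditions of $X^{v}$ after the ``doubling''; verifying, in the symmetric case, that the North-East rank conditions on $\widetilde{\Sigma}$ are compatible both with the symmetry $\sigma_{ij}=\sigma_{ji}$ and with the symplectic structure, so that cutting $Sp_{2n}(\mathbb{K})/(B\cap Sp_{2n})$ by the ``matrix of indeterminates'' slice does not alter the scheme structure; and confirming that the Weyl groups and Bruhat orders that appear are the claimed ones. By contrast, the purely set-theoretic content — that the rank conditions cut out the right varieties, and that their common refinement is indexed by common Bruhat upper bounds — is elementary linear algebra together with the combinatorics of Bruhat order, and the remaining scheme-theoretic content reduces entirely to the two classical inputs above.
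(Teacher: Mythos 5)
Your proposal is correct and takes essentially the same route as the paper: both identify $\textrm{Mat}_n(\mathbb{K})$, $\textrm{Up}_n(\mathbb{K})$, and $\textrm{Sym}_n(\mathbb{K})$ with Schubert cells (type $A$ for the first two, type $C$ for the third), realize the closed strata as Kazhdan--Lusztig varieties $X^v\cap X^\circ_{w_\square}$, and rely on the same classical inputs — scheme-theoretic defining equations in type $A$ (Fulton/Woo--Yong), the standard monomial theory transfer for type $C$, and compatible Frobenius splitting for reducedness of intersections — with Bruhat order giving the poset isomorphisms. The only differences are organizational: the paper obtains the symmetric-case equations by pulling back the type $A$ equations along $Sp_{2n}\subseteq SL_{2n}$ via a fiber-product diagram rather than asserting type $C$ incidence conditions directly, and it checks the descent of the intersection decomposition to $\mathbb{K}[Y]$ by an explicit essential-minors computation, but both points are covered by the inputs you cite (and you correctly flag them as the places needing care).
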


The proof of this proposition is just a matter of identifying each ideal with the scheme-theoretic defining ideal of a \textbf{Kazhdan-Lusztig variety} (i.e. an opposite Schubert variety intersected with a Schubert cell) in an appropriately chosen flag variety. We explain this for items (1) and (3) in Section \ref{sect:typeA}. We justify item (2) in Section \ref{sect:typeB}.

\begin{ex}
When $n=2$, the poset of those $I(v)_{\rm full} \subseteq \mathbb{K}[X]$ from Proposition \ref{prop:stratification} is pictured below:
\begin{center}
{\footnotesize
\begin{tikzpicture}
  \node[draw, align=center] (max) at (0,6) {$I_{\rm full}(1) = \langle 0 \rangle$};
  \node[draw, align=center] (h11) at (-3.5,4) {$I_{\rm full}(s_1) = \langle x_{12}\rangle$};
  \node[draw, align=center] (h12) at (0,4) {$I_{\rm full}(s_2) = \langle \textrm{det} ~X \rangle$};
  \node[draw, align=center] (h13) at (3.5,4) {$I_{\rm full}(s_3) = \langle x_{21} \rangle$};
  \node[draw, align=center] (h21) at (-5,2) {$I_{\rm full}(s_2s_1)$\\ $= \langle x_{11}, x_{12} \rangle$};
  \node[draw, align=center] (h22) at (-2.5,2) {$I_{\rm full}(s_1s_2)$\\ $= \langle x_{12}, x_{22} \rangle$};
  \node[draw, align=center] (h23) at (0,2) {$I_{\rm full}(s_1s_3)$\\$ = \langle x_{12}, x_{21} \rangle$};
  \node[draw, align=center] (h24) at (2.5,2) {$I_{\rm full}(s_2s_3)$\\$ = \langle x_{11}, x_{21} \rangle$};
  \node[draw, align=center] (h25) at (5,2) {$I_{\rm full}(s_3s_2) $\\$= \langle x_{21}, x_{22}\rangle$};
  \node[draw, align=center] (h31) at (-4.5,0) {$I_{\rm full}(s_2s_1s_2)$\\$ = \langle x_{11}, x_{12}, x_{22}\rangle$};
  \node[draw, align=center] (h32) at (-1.5,0) {$I_{\rm full}(s_2s_1s_3)$\\$ = \langle x_{11}, x_{12}, x_{21}\rangle$};
  \node[draw, align=center] (h33) at (1.5,0) {$I_{\rm full}(s_1s_3s_2)$\\$= \langle x_{12}, x_{21}, x_{22}\rangle$};
  \node[draw, align=center] (h34) at (4.5,0) {$I_{\rm full}(s_2s_3s_2)$\\$= \langle x_{11}, x_{21}, x_{22}\rangle$};
  \node[draw, align=center] (min) at (0,-2) {$I_{\rm full}(s_2s_1s_3s_2)$\\$= \langle x_{11}, x_{12}, x_{21}, x_{22}\rangle$};
  \draw (min) -- (h31) -- (h21) -- (h11) -- (max);
  \draw (min) -- (h32);
  \draw (min) -- (h33);
  \draw (min) -- (h34);
  \draw (h31) -- (h22) -- (h12);
  \draw (h32) -- (h21) -- (h12);
  \draw (h32) -- (h23) -- (h11);
  \draw (h32) -- (h24) -- (h12) -- (max);
  \draw (h33) -- (h22) -- (h11);
  \draw (h33) -- (h23) -- (h13) -- (max);
  \draw (h33) -- (h25) -- (h12);
  \draw (h34) -- (h24) -- (h13);
  \draw (h34) -- (h25) -- (h13);
\end{tikzpicture}
}
\end{center}
\noindent Here $s_i$ denotes the simple transposition $(i,i+1)$ and the covering relation is determined by ideal inclusion. It is easy to see that this poset is isomorphic to the Bruhat interval $[1,w_{\square} = 3412]\subseteq S_4$.  Observe also that the length of each permutation determines the codimension of the associated stratum in $\textrm{Mat}_n(\mathbb{K})$. This follows from the fact that each of our strata can be identified with an opposite Schubert variety intersected with a Schubert cell (a.k.a. a Kazhdan-Lusztig variety).

The ideals describing the closed strata of $\textrm{Sym}_n(\mathbb{K})$ and $\textrm{Up}_n(\mathbb{K})$ are described below (on the left and right respectively):

\begin{center}
{\footnotesize
\begin{tikzpicture}
  \node[draw, align=center] (max) at (-3.5,4) {$I_{\rm sym}(1) = \langle 0 \rangle$};
  \node[draw, align=center] (h11) at (-5,2.5) {$I_{\rm sym}(s_1s_3) $\\$= \langle \sigma_{12}\rangle$};
  \node[draw, align=center] (h12) at (-2,2.5) {$I_{\rm sym}(s_2)$\\$ = \langle \textrm{det} ~\Sigma \rangle$};
  \node[draw, align=center] (h32) at (-5,1) {$I_{\rm sym}(s_2s_1s_3)$\\$ = \langle \sigma_{11}, \sigma_{12} \rangle$};
  \node[draw, align=center] (h33) at (-2,1) {$I_{\rm sym}(s_1s_3s_2)$\\$ = \langle \sigma_{12}, \sigma_{22}\rangle$};
  \node[draw, align=center] (min) at (-3.5,-0.5) {$I_{\rm sym}(s_2s_1s_3s_2) $\\$= \langle \sigma_{11}, \sigma_{12}, \sigma_{22}\rangle$};

  \node[draw, align=center] (max2) at (3,4) {$I_{\rm up}(s_3) = \langle 0 \rangle$};
  \node[draw, align=center] (u11) at (1,2.5) {$I_{\rm up}(s_1s_3)$\\$ = \langle y_{12}\rangle$};
  \node[draw, align=center] (u12) at (3,2.5) {$I_{\rm up}(s_2s_3)$\\$ = \langle  y_{11} \rangle$};
  \node[draw, align=center] (u13) at (5, 2.5) {$I_{\rm up}(s_3s_2)$\\$ = \langle y_{22} \rangle$};
  \node[draw, align=center] (u31) at (1,1) {$I_{\rm up}(s_2s_1s_3)$\\$ = \langle y_{11}, y_{12} \rangle$};
  \node[draw, align=center] (u32) at (3,1) {$I_{\rm up}(s_1s_3s_2)$\\$ = \langle y_{12}, y_{22} \rangle$};
  \node[draw, align=center] (u33) at (5,1) {$I_{\rm up}(s_2s_3s_2)$\\$ = \langle y_{11}, y_{22}\rangle$};
  \node[draw, align=center] (min2) at (3,-0.5) {$I_{\rm up}(s_2s_1s_3s_2) $\\$= \langle y_{11}, y_{12}, y_{22}\rangle$};
  
  \draw (min2) -- (u31) -- (u11) -- (max2);
  \draw (u31) -- (u12);
  \draw (min2) -- (u32) -- (u11);
  \draw (u32) -- (u13) -- (max2);
  \draw (min2) -- (u33) -- (u13) -- (max2);
  \draw (u12) -- (max2);
  \draw (u33) -- (u12);
  
  \draw (min) -- (h32);
  \draw (min) -- (h33);
  \draw (h32) -- (h12) -- (max);
  \draw (h32) -- (h11) -- (max);
  \draw (h33) -- (h11);
  \draw (h33) -- (h12);
\end{tikzpicture}
}
\end{center}
The poset on the left is isomorphic to a Bruhat interval of type $C$; in particular, it is isomorphic to the poset of those $v\in S_4$ which commute with the longest word, and which lie in the interval $[1, w_{\square}]\subseteq S_4$. Here the order is induced by ordinary Bruhat order on $S_4$. Again, length of a permutation (with the type $C$ notion of length, so that, in particular, both $s_1s_3$ and $s_2$ have length $1$) determines the codimension of the corresponding stratum in $\textrm{Sym}_n(\mathbb{K})$. The poset on the right is isomorphic to the Bruhat interval $[w_{\rm up} = 1243, w_{\square} = 3412]\subseteq S_4$. In this case, if $v\in S_4$ has length $l(v)$ (for the usual type $A$ notion of length), then the codimension of the corresponding subvariety in $\textrm{Up}_n(\mathbb{K})$ is given by $l(v) - 1$. More generally, the codimension of the stratum of $\textrm{Up}_n(\mathbb{K})$ associated to $v\in S_n$ is given by $l(v) - \#(\textrm{entries below the diagonal})$.
\end{ex}

\begin{rmk}
In the next two subsections, we assume that the field $\mathbb{K}$ is algebraically closed for simplicity. This added assumption is harmless; every property that we need to justify in Proposition \ref{prop:stratification} may be checked over the algebraic closure of the base field instead of over the original base field.
\end{rmk}

\subsection{Type $A$ Schubert varieties and our stratifications of $\textrm{Mat}_n(\mathbb{K})$ and $\textrm{Up}_n(\mathbb{K})$}\label{sect:typeA}

Here we briefly recall some facts about Schubert varieties in type $A$. These well-known results explain items (1) and (3) of Proposition \ref{prop:stratification}. For more information, see one of the many references on the subject (eg. \cite{BrionNotes}, \cite{BrionKumar}, \cite{YoungTableaux}). 

Let $B^+$ (respectively $B^-$) denote the Borel subgroup of upper triangular (resp. lower triangular) matrices in $SL_{2n}(\mathbb{K})$. We will work in the flag variety $B^{-}\backslash SL_{2n}(\mathbb{K})$. \textbf{Schubert cells} are $B^{+}$-orbits and \textbf{Schubert varieties} are the closures of these orbits. Schubert cells and varieties are indexed by permutations in $S_{2n}$: we use $X_v^\circ$ to denote the Schubert cell $B^{-}\backslash B^{-}P(v)B^+$, and will use $X_v$ to denote its closure. 
Similarly, \textbf{opposite Schubert cells} are $B^{-}$-orbits, and \textbf{opposite Schubert varieties} are their closures. These too are indexed by permutations in $S_{2n}$, and 
we let $X^v_\circ$ denote the \textbf{opposite Schubert cell} $B^{-}\backslash B^{-}P(v)B^{-}$, and we let $X^v$ denote its closure. 

The collection of Schubert varieties (resp. opposite Schubert varieties) stratify the flag variety $B^{-}\backslash SL_{2n}(\mathbb{K})$; the intersection of any two Schubert varieties (resp. opposite Schubert varieties) is reduced, and is, scheme-theoretically, a union of other Schubert varieties (resp. opposite Schubert varieties). Containment of strata is determined by Bruhat order on $S_{2n}$. In particular, with respect to our (non-standard) conventions, the containment on opposite Schubert varieties is given by: \[X^v\subseteq X^w\textrm{ if and only if }v>w\textrm{ in Bruhat order}.\]

Of particular interest to us is the Schubert cell $X_{w_{\square}}^\circ$. This Schubert cell is isomorphic to the space of matrices
\[Full := \left\{ \begin{bmatrix}J_{n}&Z \\ 0_{n}& J_{n}\end{bmatrix} ~:~ Z\in Mat_{n}(\mathbb{K})\right\},\]
where, as above, $J_{n}$ denotes the $n\times n$ permutation matrix with $1$s along the antidiagonal and $0$s elsewhere.
The isomorphism from $Full$ to $X_{w_{\square}}^\circ$ is induced by the natural map $SL_{2n}(\mathbb{K})\rightarrow B^{-}\backslash SL_{2n}(\mathbb{K})$. 

The Schubert cell $X_{w_{\square}}^\circ$ has a stratification by opposite Schubert varieties $X^v$, induced by the stratification by opposite Schubert varieties on $B^{-}\backslash SL_{2n}(\mathbb{K})$. In particular, the (closed) strata of $X_{w_{\square}}^\circ$ are all those non-empty intersections $X^v\cap X_{w_{\square}}^\circ$. 
We note that a \textbf{Kazhdan-Lusztig variety} $X^v\cap X_{w_{\square}}^\circ$ is non-empty precisely when $v\leq w_{\square}$ in Bruhat order. 

Each Kazhdan-Lusztig variety is isomorphic to a subvariety of $Full$ obtained by imposing conditions on the ranks of all North-East justified submatrices of
\[ \widetilde{X} = \begin{bmatrix} J_n & X \\ 0_n & J_n \end{bmatrix}.
\]
In particular, by \cite{WooYongSingularities}, the ideal
\[I_{\rm full}(v)=\langle \text{minors of size } (1+\rank P(v)_{[1,i], [j,2n]}) \textrm{ in } \widetilde{X}_{[1,i],[j,2n]} \mid i,j\in [2n]\rangle\] is prime and it scheme-theoretically defines $X_v\cap X^{w_{\square}}_{\circ}$ as a subvariety of $Full$. 
In other words, 
\[X^v\cap X_{w_{\square}}^{\circ} \cong \textrm{Spec } \mathbb{K}[X]/I_{\rm full}(v).\] 
It is useful to note that the ideal $I_{\rm full}(v)$ has a much smaller generating set than the one given above. 
To a permutation matrix $P(v)$ for $v\in S_{2n}$, assign a $2n\times 2n$ grid with a $\times$ in box $(i,j)$ if and only if $P(v)$ has a $1$ in position $(i,j)$. The set of boxes in the grid that have a $\times$ neither directly north nor directly east is the \textbf{diagram} of $v$. Fulton's \textbf{essential set} $\mathcal{E}ss(v)$ is the set of those $(i,j)$ in the diagram such that neither $(i+1,j)$ nor $(i,j-1)$ is in the diagram of $v$. It follows from \cite[\S3]{Fulton} that
\[I_{\rm full}(v) = \langle (1+\rank P(v)_{[1,i], [j,2n]}) \textrm{ in } \widetilde{X}_{[1,i],[j,2n]} \mid (i,j)\in \mathcal{E}ss(v)\rangle.\] 
We refer to this smaller generating set as the set of \textbf{essential minors}.
\begin{proof}[Proof of item (1) of Proposition \ref{prop:stratification}]
This is covered by the background material presented above. In particular, we have that each $I_{\rm full}(v)$, $v\in [1,w_\square]$, is prime. The statement that the sum of any two of these ideals is radical and the intersection of others of the same type is the statement that the intersection of two Schubert varieties is a reduced union of other Schubert varieties. The statement that the poset of these ideals, ordered by inclusion, is isomorphic to the Bruhat interval $[1, w_\square]$ comes from the fact, also mentioned above, that $X^v\subseteq X^w$ if and only if $v>w$ in Bruhat order, together with the fact that $X^v\cap X_{w_{\square}}^\circ$ is non-empty precisely when $v\leq w_\square$ in Bruhat order.
\end{proof}

We next turn our attention to the upper triangular situation. 
Recall that $w_{\rm up}\in S_{2n}$ is the permutation which is expressed in one-line notation by $1\cdots n (2n)\cdots (n+1)$ or by the matrix 
$
P(w_{\rm up}) = \begin{bmatrix} 0_n & J_n \\ I_n &0_n \end{bmatrix}.
$
Then,
\[
\begin{array}{lll}
I_{\rm full}(w_{\rm up}) &= & \langle \text{minors of size } (1+\rank P(w_{\rm up})_{[1,i], [j,2n]}) \textrm{ in } \widetilde{X}_{[1,i],[j,2n]} \mid i,j\in [2n]\rangle\\ 
& = &\langle \textrm{minors of size }(n+1) \textrm{ in } \tilde{X}_{[1,n+i],[i+1,2n]}\mid 1\leq i\leq n-1\rangle \\
& = &\langle x_{ij} \mid j<i \rangle.
\end{array}
\]
Consequently, $I_{\rm full}(w_{\rm up})$ defines the space of matrices
\[
Up:= \left\{ \begin{bmatrix}J_{n} & Z \\ 0_{n} & J_{n}\end{bmatrix} ~:~ Z\in Mat_{n}(\mathbb{K}) \textrm{ is upper triangular }\right\},
\]
as a subvariety of $Full$, and $Up$ is isomorphic to the Kazhdan-Lusztig variety $X^{w_{\rm up}}\cap X_{w_{\square}}^\circ$. 
Because containment of opposite Schubert varieties is given by Bruhat order, the Kazhdan-Lusztig varieties which are isomorphic to subvarieties of $Up$ are those $X^v\cap X_{w_\square}^\circ$ with $v\in [w_{\rm up},w_{\square}]$. Using this, we prove item (3) of Proposition \ref{prop:stratification}.

\begin{proof}[Proof of item (3) of Proposition \ref{prop:stratification}]
We first check that each $I_{\rm up}(v)$ is prime. Since $I_{\rm full}(w_{\rm up}) =  \langle x_{ij}\mid j<i\rangle$, we have that $v\geq w_{\rm up}$ in Bruhat order if and only if $I_{\rm full}(v)$ contains $\langle x_{ij}\mid j<i\rangle$. 
It then follows from the definitions of $I_{\rm full}(v)$ and $I_{\rm up}(v)$ that
\[
\mathbb{K}[X]/I_{\rm full}(v)\cong \mathbb{K}[Y]/I_{\rm up}(v),
\]
for any $v\in [w_{\rm up}, w_{\square}]$. Since $I_{\rm full}(v)$ is prime, so too is $I_{\rm up}(v)$. 

Next, observe that if $v_1, v_2\in [w_{\rm up}, w_\square]$, then 
\begin{equation}\label{eq:radicalUp}
\mathbb{K}[X]/(I_{\rm full}(v_1)+I_{\rm full}(v_2))\cong \mathbb{K}[Y]/(I_{\rm up}(v_1)+I_{\rm up}(v_2))
\end{equation}
and so $I_{\rm up}(v_1)+I_{\rm up}(v_2)$ is radical since $I_{\rm full}(v_1)+I_{\rm full}(v_2)$ is radical. 
Furthermore, since we know that $I_{\rm full}(v_1)+I_{\rm full}(v_2)$ is an intersection of ideals $I_{\rm full}(w_1)\cap \cdots\cap I_{\rm full}(w_r)$ with each $w_i\in [w_{\rm up}, w_\square]$, equation \eqref{eq:radicalUp} yields  
\[
\mathbb{K}[Y]/(I_{\rm up}(v_1)+I_{\rm up}(v_2))\cong \mathbb{K}[X]/(I_{\rm full}(w_1)\cap \cdots\cap I_{\rm full}(w_r)).
\]
To see that $I_{\rm up}(v_1)+I_{\rm up}(v_2)$ can actually be written as $I_{\rm up}(w_1)\cap \cdots\cap I_{\rm up}(w_r)$, identify each upper triangular variable $y_{ij}$ with $x_{ij}\in \mathbb{K}[X]$, $i\leq j$. Observe that each $I_{\rm full}(w_i)$ is generated by the essential minors of $I_{\rm up}(w_i)$ (which involve only those $x_{ij}$ with $i\leq j$) along with the variables $x_{ij}$, $i>j$. So, the intersection $I_{\rm full}(w_1)\cap \cdots\cap I_{\rm full}(w_r)$ has a generating set which consists of generators of the intersection $I_{\rm up}(w_1)\cap \cdots\cap I_{\rm up}(w_r)$ along with the collection of $x_{ij}$, $i>j$. That is, 
\[I_{\rm full}(w_1)\cap \cdots\cap I_{\rm full}(w_r) = (I_{\rm up}(w_1)\cap \cdots\cap I_{\rm up}(w_r))+\langle x_{ij}\mid i>j\rangle,\]
and the desired result follows. 

Finally, observe that the poset of ideals $I_{\rm up}(v)$, $v\in[w_{\rm up}, w_{\square}]$, ordered by inclusion, is isomorphic to the Bruhat interval $[w_{\rm up}, w_{\square}]$. This follows immediately from the fact that the poset of ideals $I_{\rm full}(v)$, $v\in [w_{\rm up}, w_{\square}]$, ordered by inclusion, is isomorphic to the Bruhat interval $[w_{\rm up}, w_{\square}]$.
\end{proof}

\begin{rmk}\label{rmk:essMinorsUpper}
Each ideal $I_{\rm up}(v)$, $v\in [w_{\rm up}, w_\square]$, is generated by its essential minors. This follows since $I_{\rm full}(v)$, $v\in [w_{\rm up}, w_{\square}]$ is generated by its essential minors.
\end{rmk}


\subsection{Type $C$ Schubert varieties and our stratification of $\textrm{Sym}_n(\mathbb{K})$}\label{sect:typeB}

We now turn our attention to Schubert and opposite Schubert varieties in a type $C$ flag variety. We use \cite[Chapter 6]{StdMonTheory} as our reference.

Fix an integer $n\geq 1$ and let $E$ be the $2n\times 2n$ matrix
\[E := \begin{bmatrix}0 &J_n \\ -J_n &0 \end{bmatrix}. \] 
This matrix determines a non-degenerate, skew-symmetric bilinear form on $\mathbb{K}^{2n}$.
The \textbf{symplectic group} $Sp_{2n}(\mathbb{K})$ may then be defined as
\[
Sp_{2n}(\mathbb{K}) = \{A\in SL_{2n}(\mathbb{K}) \mid E(A^{t})^{-1}E^{-1} = A  \},  
\]
or, equivalently, as the fixed point set of the involution
\[
\sigma: SL_{2n}(\mathbb{K})\rightarrow SL_{2n}(\mathbb{K}),\quad
\sigma(A) = E(A^{t})^{-1}E^{-1}.
\]
For convenience, we will sometimes use $A^{-t}$ to denote $(A^t)^{-1}$.

As in \cite{StdMonTheory}, let $H = SL_{2n}(\mathbb{K})$ and let $G = Sp_{2n}(\mathbb{K}) = H^{\sigma}$. 
The Borel subgroups $B_H^+$ and $B_H^{-}$ of upper triangular and lower triangular matrices in $H$ are each stable under $\sigma$, and $B_G^+:=(B_H^+)^{\sigma}$ and $B_G^{-}:=(B_H^{+})^{\sigma}$ are each Borel subgroups of $G$. 
The associated Weyl group $C_n$ can be identified with the set of permutations $v\in S_{2n}$ which commute with the longest word $w_0$. That is, in one-line notation, elements in $C_n$ take the form $a_1\cdots a_na_{n+1}\cdots a_{2n}$ with $a_i+a_{2n+1-i} = 2n+1$ for each $1\leq i\leq 2n$. Bruhat order on $C_n$ is induced by Bruhat order on $S_{2n}$.

We work in the flag variety $B_G^{-}\setminus G$. Schubert cells are $B_G^+$ orbits and Schubert varieties are their closures. Opposite Schubert cells are $B_G^{-}$ orbits and opposite Schubert varieties are their closures. 
Schubert (resp. opposite Schubert) cells and varieties are indexed by Weyl group elements. 
In particular, for $v\in C_n$, we use $X_{G,v}^\circ$ to denote the Schubert cell $B_G^{-}\backslash B_G^{-}P(v)B^+_G$, and use $X_{G,v}$ to denote its closure. Similarly, we let $X^{G,v}_\circ$ denote the opposite Schubert cell $B^{-}_G\backslash B^{-}_GP(v)B^{-}_G$, and we let $X^{G,v}$ denote its closure. 
For $v,w\in C_n$, the inclusion $X^{G,v} \subseteq X^{G,w}$ holds if and only if $v\geq w$ in Bruhat order. 
Finally, the intersection of two opposite Schubert varieties is a reduced union of others (which is a general fact, not special to type $A$ or $C$, following from the Frobenius splitting of the flag variety \cite[Chapter 2]{BrionKumar}). 

\begin{thm}
Let $v\in C_n$. 
\begin{enumerate}
\item \cite[Proposition 6.1.1.1]{StdMonTheory} The Schubert cell $X_v^\circ$ is stable under $\sigma$ and, furthermore, $(X_{H,v}^\circ)^{\sigma} = X_{G,v}^\circ$.
\item \cite[Proposition 6.1.1.2]{StdMonTheory} Under the natural inclusion $B_G^-\backslash G \hookrightarrow B_H^-\backslash H$, there is the following (scheme-theoretic) equality:
\[
X^{G,v} = X^{H,v}\cap B_G^{-}\backslash G.
\]
\end{enumerate} 
\end{thm}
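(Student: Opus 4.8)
The plan is to derive both statements from the single structural fact that $\sigma$ is an order‑$2$ algebraic automorphism of $H=SL_{2n}(\mathbb{K})$ stabilizing $B_H^+$ and $B_H^-$, so that the symplectic flag variety $B_G^-\backslash G$ sits inside $B_H^-\backslash H$ as (a component of) the fixed‑point locus of the induced involution, still denoted $\sigma$. I would prove (1) first and then bootstrap to (2).

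For part (1): since $\sigma$ stabilizes $B_H^+$, its action on $B_H^-\backslash H$ permutes Schubert cells, and a direct matrix computation with $E$ shows the induced permutation of the index set is $v\mapsto w_0vw_0$ (equivalently, $\sigma$ induces the order‑$2$ diagram automorphism of $A_{2n-1}$), whose fixed set is exactly $C_n$; hence $X_{H,v}^\circ$ is $\sigma$‑stable for $v\in C_n$. To compute the fixed locus I would choose a representative $\dot v\in N_G(T_G)\subseteq G=H^\sigma$ of $v$ — a signed permutation matrix, which still represents $v$ in $S_{2n}$ and satisfies $\sigma(\dot v)=\dot v$ — and use the standard parametrization of $X_{H,v}^\circ$ by a product $\mathcal{U}_v$ of root subgroups of $H$ via $u\mapsto B_H^-\dot v u$. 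Because $\sigma$ fixes $\dot v$ and stabilizes the unipotent radicals $U_H^\pm$ (hence $\mathcal{U}_v$), this parametrization is $\sigma$‑equivariant, so $(X_{H,v}^\circ)^\sigma\cong\mathcal{U}_v^\sigma$. Running the identical construction inside $G$, with $B_G^\pm=(B_H^\pm)^\sigma$ and $U_G^\pm=(U_H^\pm)^\sigma$, parametrizes $X_{G,v}^\circ$ by the product of the corresponding root subgroups of $G$; since each such $G$‑root subgroup is the $\sigma$‑fixed locus of the one or two $H$‑root subgroups it is built from, this product is precisely $\mathcal{U}_v^\sigma$, and chasing through the two parametrizations shows $X_{G,v}^\circ$ and $(X_{H,v}^\circ)^\sigma$ are literally the same subset of $B_H^-\backslash H$.

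For part (2): the set‑theoretic equality follows from part (1) and its verbatim analogue for opposite Schubert cells (repeat the argument with $B_H^-$ in place of $B_H^+$): for $w\in C_n$ the opposite cell $X^{H,w}_\circ$ is $\sigma$‑stable with $(X^{H,w}_\circ)^\sigma=X^{G,w}_\circ$, and since $B_G^-\backslash G=\bigsqcup_{w\in C_n}X^{G,w}_\circ$, the intersection of $B_G^-\backslash G$ with $X^{H,w}_\circ$ is $X^{G,w}_\circ$ when $w\in C_n$ and is empty otherwise. Using $X^{H,v}=\bigsqcup_{w\ge v}X^{H,w}_\circ$ and the fact (stated in the excerpt) that Bruhat order on $C_n$ is induced from $S_{2n}$, intersecting with $B_G^-\backslash G$ extracts exactly $\bigsqcup_{w\in C_n,\ w\ge v}X^{G,w}_\circ=X^{G,v}$ as sets. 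The remaining issue — that the \emph{scheme-theoretic} intersection is reduced — I would settle with Frobenius splitting: there is a splitting of $B_H^-\backslash H$ that simultaneously compatibly splits $B_G^-\backslash G$ and every opposite Schubert variety $X^{H,v}$; granting this, the standard lemma that a scheme-theoretic intersection of compatibly split subvarieties is reduced (\cite{BrionKumar}), together with the set-theoretic identification and the irreducibility of $X^{G,v}$, gives the equality. An alternative is the standard-monomial route of \cite{StdMonTheory}: show the restriction map $H^0(B_H^-\backslash H,\mathcal{L})\to H^0(B_G^-\backslash G,\mathcal{L})$ is surjective and carries the standard monomial basis adapted to $X^{H,v}$ onto the one adapted to $X^{G,v}$, yielding equality of vanishing ideals directly.

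I expect the reducedness in part (2) to be the main obstacle: producing a \emph{single} Frobenius splitting of the ambient flag variety that is compatible at once with the sub-flag-variety $B_G^-\backslash G$ and with all the opposite Schubert varieties $X^{H,v}$. A smaller, characteristic-sensitive point arises already in part (1), where $\sigma$ may interchange pairs of root subgroups of $\mathcal{U}_v$ rather than fixing them individually, so that one must check $\mathcal{U}_v^\sigma$ is smooth of the expected type‑$C$ dimension — routine away from characteristic $2$, and handled in \cite{StdMonTheory} in general.
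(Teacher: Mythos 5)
The paper does not prove this theorem at all: both items are quoted directly from \cite{StdMonTheory} (Propositions 6.1.1.1 and 6.1.1.2), and the surrounding text of Section \ref{sect:typeB} only sets up notation so the citation applies. So there is no in-paper argument to compare with; what you have written is an attempted reconstruction of the cited result.

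As a reconstruction, your outline is reasonable and identifies the right pressure points, but it is not yet a proof. Part (1) is essentially sound: $\sigma$ stabilizes $B_H^{\pm}$, the induced action on the index set of cells is $v\mapsto w_0vw_0$ with fixed set $C_n$, a $\sigma$-fixed representative $\dot v$ can be chosen in $G$, and the root-subgroup parametrization of the cell is $\sigma$-equivariant, so $(X_{H,v}^\circ)^\sigma$ is the corresponding product of $\sigma$-fixed one-parameter subgroups; since a cell is an affine space there is no extra-component issue, and the only delicate case (roots fixed by the diagram automorphism, relevant in characteristic $2$) you correctly flag. Part (2) is where the real content lies, and your proposal defers exactly that content: the set-theoretic identity via the Bruhat decompositions of $B_G^-\backslash G$ and $X^{H,v}$ is fine, but the scheme-theoretic (i.e.\ reducedness/ideal-theoretic) statement rests on an unproved ingredient --- a single Frobenius splitting of $B_H^-\backslash H$ compatible simultaneously with $B_G^-\backslash G$ and with every opposite Schubert variety, or alternatively the surjectivity and basis-compatibility of restriction maps of sections as in standard monomial theory. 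Neither is routine; the second is precisely what \cite{StdMonTheory} establishes, and the first is additionally sensitive to characteristic $2$, where $\sigma$ has order equal to the characteristic and fixed-point arguments degrade, while the paper's setting allows an arbitrary (algebraically closed) field. So the honest conclusion is: your sketch would become a proof only after supplying the compatible-splitting or standard-monomial input, which is the substance of the propositions the paper chooses simply to cite.
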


The specific results that we will use now follow. 

\begin{cor}\label{cor:definingEqnsSym}
\begin{enumerate}
\item The space of matrices 
\[Sym := \left\{ \begin{bmatrix}J_n & Z \\ 0 & J_n\end{bmatrix} ~:~ Z\in Mat_{n}(\mathbb{K}) \textrm{ is symmetric }\right\}\]
is isomorphic to the Schubert cell $X_{G, w_\square}^\circ$.
\item Let $v\in C_n$ lie in the Bruhat interval $[1,w_{\square}]$. Then the ideal $I_{\rm sym}(v)\subseteq \mathbb{K}[\Sigma]$ is a non-trivial prime ideal which scheme-theoretically defines the intersection $X^{G,v}\cap X_{G,w_\square}^\circ$. Furthermore, the essential minors generate $I_{\rm sym}(v)$.
\end{enumerate}
\end{cor}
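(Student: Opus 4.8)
The plan is to deduce both parts from Propositions~6.1.1.1 and~6.1.1.2 of \cite{StdMonTheory} recalled just above, together with the type~$A$ results of Section~\ref{sect:typeA}; the key preliminary step is to make the involution $\sigma$ completely explicit on the Schubert cell $X^\circ_{H,w_\square}$.

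First I would transport $\sigma$ to $Full$. Since $B_H^-$ is $\sigma$-stable, $\sigma$ preserves $X^\circ_{H,w_\square}$, and under the isomorphism $Full\cong X^\circ_{H,w_\square}$ of Section~\ref{sect:typeA} (given by $\widetilde{X}\mapsto B_H^-\widetilde{X}$) it is carried to the map $\widetilde{X}\mapsto B_H^-\sigma(\widetilde{X})$ with $\sigma(\widetilde{X})=E\,\widetilde{X}^{-t}E^{-1}$. A direct block computation gives
\[
\sigma(\widetilde{X})=\begin{bmatrix}J_n&X^t\\ 0_n&J_n\end{bmatrix},
\]
so under the isomorphism $\sigma$ corresponds to the involution $X\mapsto X^t$ of $Mat_n(\mathbb{K})$, whose fixed subscheme is the linear subspace $\{\,X=X^t\,\}=Sym$. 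By Proposition~6.1.1.1 of \cite{StdMonTheory}, $(X^\circ_{H,w_\square})^\sigma=X^\circ_{G,w_\square}$; since the isomorphism $Full\cong X^\circ_{H,w_\square}$ intertwines the two involutions, it restricts to an isomorphism $Sym\cong X^\circ_{G,w_\square}$, giving part~(1).

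For part~(2) I would first identify the scheme-theoretic intersection. Because $X^\circ_{G,w_\square}\subseteq B_G^-\backslash G$, Proposition~6.1.1.2 of \cite{StdMonTheory} yields $X^{G,v}\cap X^\circ_{G,w_\square}=X^{H,v}\cap(X^\circ_{H,w_\square})^\sigma$. Inside the affine space $X^\circ_{H,w_\square}\cong Full$, with coordinate ring $\mathbb{K}[X]$, Section~\ref{sect:typeA} gives $X^{H,v}\cap X^\circ_{H,w_\square}=\operatorname{Spec}\mathbb{K}[X]/I_{\rm full}(v)$ and $(X^\circ_{H,w_\square})^\sigma=Sym=\operatorname{Spec}\mathbb{K}[X]/\langle x_{ij}-x_{ji}\rangle$, so the intersection is $\operatorname{Spec}\mathbb{K}[X]/(I_{\rm full}(v)+\langle x_{ij}-x_{ji}\rangle)$. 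Under the identification $\mathbb{K}[X]/\langle x_{ij}-x_{ji}\rangle\cong\mathbb{K}[\Sigma]$ the minors of $\widetilde{X}$ map to the minors of $\widetilde{\Sigma}$, so $I_{\rm full}(v)$ maps onto $I_{\rm sym}(v)$; hence $X^{G,v}\cap X^\circ_{G,w_\square}\cong\operatorname{Spec}\mathbb{K}[\Sigma]/I_{\rm sym}(v)$. Applying the same substitution to the essential-minor generating set of $I_{\rm full}(v)$ furnished by Section~\ref{sect:typeA} shows that $I_{\rm sym}(v)$ is generated by its essential minors.

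It remains to show $I_{\rm sym}(v)$ is prime and proper when $v\in C_n\cap[1,w_\square]$. Since $X^\circ_{G,w_\square}$ is dense open in the Schubert variety $X_{G,w_\square}$, the scheme $X^{G,v}\cap X^\circ_{G,w_\square}$ is an open subscheme of the Richardson variety $X^{G,v}\cap X_{G,w_\square}$, and for $v\le w_\square$ it is nonempty since it contains the nonempty open Richardson cell $X^{G,v}_\circ\cap X^\circ_{G,w_\square}$. Richardson varieties in $B_G^-\backslash G$ are reduced and irreducible (e.g.\ by Frobenius splitting, cf.\ \cite[Chapter~2]{BrionKumar}), so $X^{G,v}\cap X^\circ_{G,w_\square}$ is integral; therefore $I_{\rm sym}(v)$ is prime, and it is proper because the scheme is nonempty. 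I expect the one genuinely delicate point to be the scheme-theoretic compatibility of the two cited propositions with the passage to the $Full$-chart and the imposition of $x_{ij}=x_{ji}$; this causes no trouble precisely because the $\sigma$-fixed subscheme of $Full$ is the visibly reduced linear subspace $Sym$, so no characteristic-$2$ subtlety can arise. All the real geometric content is carried by the cited propositions and the type~$A$ results, and what remains is bookkeeping.
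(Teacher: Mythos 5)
Your proposal is correct and follows essentially the same route as the paper: part (1) via the same block computation showing $\sigma$ acts on $Full$ by $X\mapsto X^t$ together with \cite[Proposition 6.1.1.1]{StdMonTheory}, and part (2) via \cite[Proposition 6.1.1.2]{StdMonTheory} to identify the scheme-theoretic intersection with $\operatorname{Spec}\mathbb{K}[\Sigma]/I_{\rm sym}(v)$ (your presentation with $I_{\rm full}(v)+\langle x_{ij}-x_{ji}\rangle$ is the same as the paper's tensor-product/pullback computation), with primality coming from the reducedness and irreducibility of the type $C$ Kazhdan--Lusztig/Richardson intersections. Your only variation is phrasing the last step through openness in the Richardson variety, which is the same underlying Frobenius-splitting fact the paper invokes.
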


\begin{proof}
We begin with (1), and proceed by showing that $Sym$ is isomorphic to the invariants $(X_{H, w_\square}^\circ)^{\sigma}$. First recall the isomorphism
\[\pi_H: Full \rightarrow X_{H, w_\square}^\circ \] given by identifying a matrix $M$ in $Full$ with its coset $B^{-}M$. 
Observe that, given $x\in X_{H,w_\square}^\circ$, there is a unique $M\in Full$ such that $x = B_H^{-}M$.  It follows that $\sigma(x) = x$ if and only if $\sigma(B_H^{-}M) = B_H^{-}M$. Now, for any $b\in B_H^{-}$, we have that $\sigma(bM) = (E(b^t)^{-1}E^{-1})(E(M^{t})^{-1}E^{-1})$. Since both $B_H^{-}$ and $Full$ are stable under $\sigma$, it follows that $\sigma(B_H^{-}M) = B_H^{-}M$ if and only if $E(M^{t})^{-1}E^{-1} = M$. We now observe that the latter equality is the condition that $M\in Sym$: let $M\in Full$ so that
\[M = \begin{bmatrix}J_n&A\\0&J_n\end{bmatrix}
\]
for some $n\times n$ matrix $A$. Then,
\begin{eqnarray*}
E(M^t)^{-1}E^{-1}&=&\begin{bmatrix}0&J_n\\-J_n&0\end{bmatrix} \begin{bmatrix}J_n&A\\0&J_n \end{bmatrix}^{-t}\begin{bmatrix}0&J_n\\-J_n&0\end{bmatrix}^{-1}\\
&=&\begin{bmatrix}0&J_n\\-J_n&0\end{bmatrix} \begin{bmatrix}J_n&~~0\\-J_nA^tJ_n&~~J_n \end{bmatrix}\begin{bmatrix}0&-J_n\\J_n&0\end{bmatrix}\\
&=&\begin{bmatrix} J_n&A^t\\0&J_n\end{bmatrix},
\end{eqnarray*}
which is equal to $M$ if and only if $A = A^t$. Thus,  $E(M^{t})^{-1}E^{-1} = M$ if and only if $M\in Sym$. 
Consequently, the map $\pi_H: Full \rightarrow X_{H, w_\square}^\circ$ induces an isomorphism from $Sym$ to $(X_{H,w_\square}^\circ)^{\sigma}$.
%
By applying \cite[Proposition 6.1.1.1]{StdMonTheory}, we see that $Sym$ is isomorphic to the Schubert cell $X_{G, w_\square}^\circ$.

We now prove (2).
As in type $A$, $X^{G,v}\cap X_{G,w_\square}^\circ$ is non-empty precisely when $v<w_{\square}$ in Bruhat order on $C_n$, and such non-empty intersections are reduced and irreducible. 
Thus, each variety $X^{G,v}\cap X_{G,w_\square}^\circ$, which is a closed subvariety of the affine space $X_{G,w_\square}^\circ\cong Sym$, has a scheme-theoretic defining ideal which is prime. We now show that this prime ideal is indeed $I_{\rm sym}(v)\subseteq \mathbb{K}[\Sigma]$.

By \cite[Proposition 6.1.1.2]{StdMonTheory}, we have $X^{G,v} = X^{H,v}\cap B_G^{-}\backslash G$ (under the natural inclusion $B_G^-\backslash G \hookrightarrow B_H^-\backslash H$). Then, 
\[X^{G,v}\cap X_{G,w_\square}^\circ = (X^{H,v}\cap B_G^{-}\backslash G) \cap  X_{G,w_\square}^\circ = X^{H,v}  \cap  X_{G,w_\square}^\circ,\]
and we have the following pullback diagram:
\[
\vcenter{\hbox{\begin{tikzpicture}
\node (ul) at (0,2) {$X^{H,v}\cap X_{G,w_\square}^\circ$};
\node (ur) at (4,2) {$X^{H,v}\cap X^\circ_{H,w_\square}$};
\node (ll) at (0,0) {$X^\circ_{G,w_\square}$};
\node (lr) at (4,0) {$X^\circ_{H,w_\square}$};
\path[right hook-latex]
(ul) edge node[above]{} (ur);
\path[right hook-latex]
(ll) edge node[above] {} (lr);
\path[right hook-latex]
(ur) edge node[right] {} (lr);
\path[right hook-latex]
(ul) edge node[right] {} (ll);
\end{tikzpicture}}}
\]
%
Since $\mathbb{K}[X]$ and $\mathbb{K}[\Sigma]$ are the coordinate rings of the Schubert cells $X^\circ_{H,w_\square}$ and $X^\circ_{G,w_\square}$ respectively, and 
$\mathbb{K}[X]/I_{\rm full}(v)$ is coordinate ring of $X^{H,v}\cap X_{H, w_\square}^\circ$, we see that 
\begin{equation}\label{eq:genSet}
\mathbb{K}[X]/I_{\rm full}(v)\otimes_{\mathbb{K}[X]}\mathbb{K}[\Sigma]\cong \mathbb{K}[\Sigma]/I_{\rm sym}(v)
\end{equation}
is the coordinate ring of the affine variety  $X^{H,v}\cap X_{G,w_\square}^\circ$ ($=X^{G,v}\cap X_{G,w_\square}^\circ$). It also follows from \eqref{eq:genSet} that the essential minors of $I_{\rm sym}(v)$ form a generating set of $I_{\rm sym}(v)$, since the ideal $I_{\rm full}(v)$ is generated by its essential minors.
\end{proof}

\begin{proof}[Proof of item (2) of Proposition \ref{prop:stratification}]
The primality of $I_{\rm sym}(v)$ for $v\in [1,w_{\square}]\cap C_n$ is given by Corollary \ref{cor:definingEqnsSym}. The rest of the proof of item (2) of Proposition \ref{prop:stratification} is the same as the proof of the analogous statements in item (1) of Proposition \ref{prop:stratification}.
\end{proof}


\section{Matrix Schubert varieties and their analogs for symmetric and upper triangular matrices}\label{sect:invertible}

In this section, we focus our attention on Fulton's matrix Schubert varieties and their analogs for symmetric and upper triangular matrices.
In each of the generic matrix, symmetric matrix, and upper triangular matrix settings, the associated matrix Schubert varieties are a subset of the strata described in the previous section.
%
%
It turns out that there is a tight relationship between
the matrix Schubert varieties in the three different settings, and that analogs of many standard results on usual matrix Schubert varieties also hold in the cases of symmetric and upper triangular matrices. 
We explain this in this section.

Let $w \in S_{n}$ and let $w_0$ be the longest word in $S_n$. Like in the previous section, we let $P(w)$ be the $n \times n$
permutation matrix of $w_0w$ so that $P(w)$ has a $1$ in location $(i,j)$ iff $w_0w(i) = j$.  
Let $R(w)$ be the $n\times n$ matrix 
such that $R(w)_{ij} =  \sum_{k = 1 }^{i} \sum_{l = j}^{n} P(w)_{kl}$,
the number of $1$'s in $P(w)$ above and to the right.
To the permutation $w$, we associate ideals given by rank conditions
induced by the rank matrix $R(w)$
$$
J_{\rm full}(w) =  \sum_{ij} \langle  R(w)_{ij} + 1 \mbox{ minors of } X_{[1,i],[j,n]}  \rangle \subseteq \mathbb{K}[X].
$$
We can define ideals similarly among symmetric and upper triangular matrices:
$$
J_{\rm sym}(w)  =  \sum_{ij} \langle  R(w)_{ij} + 1 \mbox{ minors of } \Sigma_{[1,i],[j,n]}  \rangle \subseteq \mathbb{K}[\Sigma].
$$
$$
J_{\rm up}(w)  =  \sum_{ij} \langle  R(w)_{ij} + 1 \mbox{ minors of } Y_{[1,i],[j,n]}  \rangle \subseteq \mathbb{K}[Y].
$$

The ideal $J(w)_{\rm full}$ is often called a \textbf{Schubert determinantal ideal}. It is the defining ideal of a matrix Schubert variety. 
In analogy, we call $J_{\rm sym}(w)$ a \textbf{symmetric Schubert determinantal ideal} and call $J_{\rm up}(w)$ an \textbf{upper triangular Schubert determinantal ideal}.

\begin{prop}\label{prop:grobnerInvertible}
For any $w \in S_{n}$, each of the ideals $J_{\rm full}(w)$, $J_{\rm sym}(w)$, and $J_{\rm up}(w)$ are prime. 
Furthermore, with respect to any diagonal monomial order\footnote{By a diagonal monomial order, we mean any monomial order satisfying the property that the leading term of the determinant of a submatrix is the product of the entries along the diagonal of that submatrix.}, the essential minors form a Gr\"obner basis with squarefree initial terms.
\end{prop}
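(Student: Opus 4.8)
The plan is to reduce everything to statements already established in Sections~\ref{sect:flag} for the $2n\times 2n$ setting, together with the classical theory of Fulton's matrix Schubert varieties. First I would observe that for $w\in S_n$, embedding $w$ into $S_{2n}$ via $v := w_{\square}\cdot(\text{block form of } w)$ — concretely the permutation whose permutation matrix $P(v)$ is $\left[\begin{smallmatrix} J_n & 0_n\\ 0_n & P(w)\end{smallmatrix}\right]$ after the appropriate shift — places $v$ in the Bruhat interval $[1,w_{\square}]\subseteq S_{2n}$ (respectively in $C_n\cap[1,w_{\square}]$ when we want the symmetric statement, which is why $w_{\square}$ was chosen to be a type $C$ element). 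A direct computation of the rank array shows that $\rank P(v)_{[1,i],[j,2n]}$ on the relevant range of indices recovers $n + R(w)_{i-n,\,j-n}$, so that the essential minors of $I_{\rm full}(v)$ in $\widetilde X$ are, after the harmless column/row operations coming from the two $J_n$ blocks, exactly the generators of $J_{\rm full}(w)$ in $X$ (and likewise $I_{\rm sym}(v)\leftrightarrow J_{\rm sym}(w)$, $I_{\rm up}(v)\leftrightarrow J_{\rm up}(w)$). Primality of $J_{\rm full}(w)$, $J_{\rm sym}(w)$, $J_{\rm up}(w)$ then follows immediately from the primality of $I_{\rm full}(v)$, $I_{\rm sym}(v)$, $I_{\rm up}(v)$ proved in Proposition~\ref{prop:stratification}, since the coordinate rings agree.

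For the Gr\"obner basis assertion, I would handle the three cases by a transfer argument rooted in the generic case. For $J_{\rm full}(w)$ this is Knutson--Miller's theorem: the Fulton generators (the essential minors) form a Gr\"obner basis for any diagonal term order, with squarefree — indeed, prime-monomial — initial ideal, and the initial ideal is the Stanley--Reisner ideal of a shellable complex (see \cite{Knutson2005}). For $J_{\rm up}(w)$: the variables $y_{ij}$ with $i\le j$ form a subset of the $x_{ij}$, and the isomorphism $\mathbb{K}[X]/I_{\rm full}(v)\cong\mathbb{K}[Y]/I_{\rm up}(v)$ established in Section~\ref{sect:typeA} (via $I_{\rm full}(v)=I_{\rm up}(v)+\langle x_{ij}\mid i>j\rangle$) is realized by setting the below-diagonal variables to zero; since the essential minors of $J_{\rm full}(w)$ that survive this specialization are precisely the essential minors of $J_{\rm up}(w)$, and a diagonal term order on $\mathbb{K}[X]$ restricts to a diagonal term order on $\mathbb{K}[Y]$, the Gr\"obner basis property and squarefreeness of initial terms are inherited (quotienting a Gr\"obner basis by a subset of coordinate variables that appear among the generators preserves the Gr\"obner property). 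The same mechanism can be invoked directly; alternatively one may cite \cite{WooYongGrobner} as noted in the footnote.

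The symmetric case $J_{\rm sym}(w)$ is the substantive one and the step I expect to be the main obstacle, because one cannot simply specialize variables: passing from $\mathbb{K}[X]$ to $\mathbb{K}[\Sigma]$ identifies $x_{ij}$ with $x_{ji}=\sigma_{ij}$, which is \emph{not} a coordinate subring inclusion and does not obviously preserve the Gr\"obner property. Here the plan is to argue via initial ideals and Hilbert functions. From Corollary~\ref{cor:definingEqnsSym}, $\mathbb{K}[\Sigma]/I_{\rm sym}(v)$ is obtained from $\mathbb{K}[X]/I_{\rm full}(v)$ by the base change $-\otimes_{\mathbb{K}[X]}\mathbb{K}[\Sigma]$ along $x_{ij}\mapsto x_{ji}$ for $i>j$; one checks this is a flat degeneration-compatible operation and that it sends the Knutson--Miller diagonal Gr\"obner basis of $J_{\rm full}(w)$ to the essential minors of $J_{\rm sym}(w)$. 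Then the key point is the inequality $\In_{<} J_{\rm sym}(w) \supseteq \langle \In_{<}(\text{essential minors})\rangle$ (automatic) together with the reverse inequality via a dimension/Hilbert-series count: the initial ideal of $J_{\rm sym}(w)$ is squarefree and hence radical, its Stanley--Reisner complex is shellable (inherited from the full case by the same identification of facets with compatible pipe dreams / RC-graphs, now in the symmetric variables), so it is Cohen--Macaulay of the right dimension, and comparing Hilbert series with the known Hilbert series of $\mathbb{K}[\Sigma]/J_{\rm sym}(w)$ (equal to that of $\mathbb{K}[X]/I_{\rm full}(v)$ under the identification, since the two quotients are isomorphic rings) forces equality. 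Once $\In_< J_{\rm sym}(w)$ is pinned down, squarefreeness of the initial terms of the essential minors is immediate, and the Gr\"obner basis claim follows from Buchberger's criterion being subsumed by the Hilbert-function argument. I would organize the write-up so that the generic case is quoted, the upper triangular case is the easy specialization, and the symmetric case carries the Hilbert-series comparison in full.
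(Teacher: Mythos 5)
Your primality argument and the full/upper-triangular Gr\"obner statements follow the paper's route: identify $J_\bullet(w)$ with a stratum ideal $I_\bullet(\tilde w)$ from Section~\ref{sect:flag} by matching essential minors, quote Knutson--Miller for $J_{\rm full}(w)$, and specialize away the below-diagonal variables for $J_{\rm up}(w)$. Two cautions there. First, your explicit embedding, the $v$ with matrix $\bigl[\begin{smallmatrix}J_n&0_n\\0_n&P(w)\end{smallmatrix}\bigr]$, does not in general lie in $[1,w_\square]$ (take $w$ with $w(n)=1$ and test Lemma~\ref{lem:square}), and the rank bookkeeping on the relevant range is $R(w)_{i,\,j-n}$ rather than $n+R(w)_{i-n,\,j-n}$; the correct extensions are $\tilde w(i)=w(i)$ for $i\le n$ and $\tilde w(i)=i$ (resp.\ $3n+1-i$, resp.\ the type $C$ mirror image) for $i>n$. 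This is a bookkeeping slip, not a conceptual one. Second, in the upper-triangular case the transfer of the Gr\"obner property under killing variables is not automatic: one needs the diagonal leading terms of the full essential minors to avoid the below-diagonal variables (this is exactly where $w\in S_n$ being an honest permutation, not a partial permutation, is used, and it is the hypothesis of Lemma~\ref{grobner:extra}); you should state and check this.

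The genuine gap is in the symmetric case, which you correctly flag as the substantive one but then resolve by a Hilbert-series comparison resting on a false premise: $\mathbb{K}[\Sigma]/J_{\rm sym}(w)$ and $\mathbb{K}[X]/I_{\rm full}(v)$ are \emph{not} isomorphic rings. Corollary~\ref{cor:definingEqnsSym} gives $\mathbb{K}[\Sigma]/I_{\rm sym}(v)\cong \mathbb{K}[X]/I_{\rm full}(v)\otimes_{\mathbb{K}[X]}\mathbb{K}[\Sigma]$, i.e.\ a further quotient by the $\binom{n}{2}$ linear forms $x_{ij}-x_{ji}$, which changes dimension and Hilbert series drastically: for $w=w_0$, $V_{\rm full}(w_0)$ is the $\binom{n+1}{2}$-dimensional space of lower triangular matrices while $V_{\rm sym}(w_0)$ is the $n$-dimensional space of diagonal matrices. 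Likewise, shellability of a ``symmetric pipe-dream complex'' with the expected facets cannot be inherited formally from the generic case; establishing that the candidate monomial ideal has the right Hilbert function is precisely the content to be proven, so the argument as written is circular. The paper closes this case with a short elimination argument instead: apply Lemma~\ref{grobner:extra} with $\mathcal{F}$ the essential minors of $J_{\rm full}(w)$ and $\mathcal{G}=\{x_{ji}-x_{ij}\mid i<j\}$, extending a diagonal order so that $x_{ji}>x_{ij}$ for $i<j$; because $w$ is a permutation, the diagonal leading terms of $\mathcal{F}$ lie above the main diagonal, hence are unchanged by taking normal forms modulo $\mathcal{G}$, and those normal forms, after substituting $\sigma_{ij}$, are exactly the symmetric essential minors, which therefore form a Gr\"obner basis with the same squarefree leading terms. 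If you wish to keep your route, you would need an independent computation of the Hilbert series (or dimension and Cohen--Macaulayness) of the proposed initial ideal inside $\mathbb{K}[\Sigma]$, which is the hard step your sketch assumes.
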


This result is known in the type $A$ setting. 
In the case of generic matrices, 
the Gr\"obner basis result
appeared first in \cite{Knutson2005}. 
The upper triangular case follows in a straight-forward manner from the generic case, or can be seen from results in \cite{WooYongGrobner}. We explain this in the proof below.
A key tool is the following observation about Gr\"obner bases.  Note that
$NF_\mathcal{G}(f)$ denotes the normal form obtained by applying polynomial long
division of the polynomial $f$ by the set of polynomials $\mathcal{G}$, with
respect to the chosen term order.

\begin{lem}\label{grobner:extra}
Let $\mathcal{F} = \{f_1, \ldots, f_r\} \subseteq \kk[x_1, \ldots, x_n, y_1, \ldots, y_n]$ 
be a Gr\"obner basis for an ideal $I$ with respect to a fixed term order $<$
such that $\text{in}_< f_i  \in \kk[x_1, \ldots, x_n]$ for all $i$.
Let  $\mathcal{G}  = \{y_1 - g_1, \ldots, y_n- g_n\}$ be 
a Gr\"obner basis for an ideal $J$
with respect to $<$, such
that $\text{in}_< (y_i - g_i)  = y_i$ for all $i$, and $g_i \in \kk[x_1, \ldots, x_n]$.
Suppose that  $\text{in}_< NF_\mathcal{G}( f_i) = \text{in}_< f_i$ for all $i$.
Then the set 
$$
NF_\mathcal{G}( \mathcal{F})  =  \{  NF_\mathcal{G}(f_1), \ldots,  NF_\mathcal{G}(f_r) \}
$$
is a Gr\"obner basis for the ideal it generates.
\end{lem}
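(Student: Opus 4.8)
The plan is to show that $NF_{\mathcal{G}}(\mathcal{F})$ generates the ideal $I + J$, and then to verify Buchberger's criterion, exploiting the fact that $\mathcal{G}$ is essentially a reduction rule expressing each $y_i$ in terms of the $x$-variables. First I would observe that polynomial long division by $\mathcal{G}$ amounts to iteratively substituting $g_i$ for $y_i$; since each $g_i\in \kk[x_1,\dots,x_n]$, the normal form $NF_{\mathcal{G}}(f)$ lies in $\kk[x_1,\dots,x_n]$ for any $f$, and $f - NF_{\mathcal{G}}(f)\in J$. In particular $NF_{\mathcal{G}}(f_i)\equiv f_i \pmod J$, so $\langle NF_{\mathcal{G}}(\mathcal{F})\rangle + J = \langle \mathcal{F}\rangle + J = I + J$ (using that $\mathcal{F}$ generates $I$). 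Thus $NF_{\mathcal{G}}(\mathcal{F}) \cup \mathcal{G}$ generates $I+J$, and I would actually aim to prove the slightly stronger statement that $NF_{\mathcal{G}}(\mathcal{F})\cup \mathcal{G}$ is a Gr\"obner basis for $I+J$; the lemma as stated (that $NF_{\mathcal{G}}(\mathcal{F})$ is a Gr\"obner basis for the ideal it generates, an ideal in $\kk[x_1,\dots,x_n]$) then follows by intersecting with $\kk[x_1,\dots,x_n]$, since the leading terms of the elements of $\mathcal{G}$ are precisely $y_1,\dots,y_n$.

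The key computation is Buchberger's criterion applied to the combined set $NF_{\mathcal{G}}(\mathcal{F})\cup \mathcal{G}$. There are three families of $S$-pairs. Pairs within $\mathcal{G}$: since $\text{in}_<(y_i-g_i)=y_i$ and these leading terms are pairwise coprime, those $S$-pairs reduce to zero automatically. Pairs of one element $NF_{\mathcal{G}}(f_i)$ with one element $y_j-g_j$: here I use the hypothesis $\text{in}_< NF_{\mathcal{G}}(f_i) = \text{in}_< f_i \in \kk[x_1,\dots,x_n]$, which is coprime to $y_j$, so again the leading terms are coprime and the $S$-pair reduces to zero by the standard coprime-leading-term lemma (Buchberger's first criterion). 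Pairs $NF_{\mathcal{G}}(f_i)$ with $NF_{\mathcal{G}}(f_j)$: this is the heart of the argument. I would note that $S(NF_{\mathcal{G}}(f_i), NF_{\mathcal{G}}(f_j)) = S(f_i,f_j)$ modulo $J$, because the $S$-polynomial only depends on leading terms (which are unchanged, by hypothesis) and the lower-order terms differ by elements of $J$; more precisely $S(NF_{\mathcal{G}}(f_i),NF_{\mathcal{G}}(f_j)) \equiv S(f_i,f_j)\pmod J$. Since $\mathcal{F}$ is a Gr\"obner basis, $S(f_i,f_j)$ reduces to $0$ modulo $\mathcal{F}$; I can then take the normal form with respect to $\mathcal{G}$ of the whole reduction, producing a standard expression $S(NF_{\mathcal{G}}(f_i),NF_{\mathcal{G}}(f_j)) = \sum_k h_k \, NF_{\mathcal{G}}(f_k) + (\text{element of }J)$ with the degree bound $\text{in}_<(h_k NF_{\mathcal{G}}(f_k)) \le \text{in}_< S(f_i,f_j)$ preserved, and then reduce the $J$-part using $\mathcal{G}$. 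The upshot is that $S(NF_{\mathcal{G}}(f_i),NF_{\mathcal{G}}(f_j))$ has a standard representation in terms of $NF_{\mathcal{G}}(\mathcal{F})\cup\mathcal{G}$, which is what Buchberger's criterion requires.

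The main obstacle I anticipate is bookkeeping the degree bounds in that last step: one must check that applying $NF_{\mathcal{G}}$ to a standard representation of $S(f_i,f_j)$ in terms of $\mathcal{F}$, and then absorbing the resulting $J$-terms via $\mathcal{G}$, does not increase leading terms beyond what Buchberger's criterion allows. The cleanest way to handle this is to argue at the level of reduction steps: each reduction of $S(f_i,f_j)$ by some $f_k$ can be tracked through the substitution $y\mapsto g$, using the hypothesis $\text{in}_< NF_{\mathcal{G}}(f_k) = \text{in}_< f_k$ to ensure the substituted reduction is still a valid reduction step (the leading term being cancelled is still present), and each substitution $y_i\mapsto g_i$ is itself a reduction by $y_i-g_i\in\mathcal{G}$ which strictly decreases the term order. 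Since $<$ is a well-order on monomials, interleaving these two kinds of reductions terminates, and the final result is $0$, giving the required standard representation. Once Buchberger's criterion is verified for $NF_{\mathcal{G}}(\mathcal{F})\cup\mathcal{G}$, restricting to $\kk[x_1,\dots,x_n]$ (equivalently, eliminating $y_1,\dots,y_n$) yields that $NF_{\mathcal{G}}(\mathcal{F})$ is a Gr\"obner basis for $\langle NF_{\mathcal{G}}(\mathcal{F})\rangle$, completing the proof.
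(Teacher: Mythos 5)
Your proposal is correct, but it reaches the key intermediate fact by a more laborious route than the paper. Both arguments pivot on the same two observations: that $NF_{\mathcal{G}}(\mathcal{F}) \cup \mathcal{G}$ is a Gr\"obner basis of $I+J$, and that a polynomial of $\kk[x_1,\ldots,x_n]$ can only ever be reduced against the elements $NF_{\mathcal{G}}(f_k)$, since the leading terms of $\mathcal{G}$ are the pure variables $y_i$ (your closing ``intersect with $\kk[x_1,\ldots,x_n]$'' step is exactly the paper's final step; note you cannot literally cite the elimination theorem, since $<$ need not be an elimination order, but the reason you give --- no monomial of $\kk[x_1,\ldots,x_n]$ is divisible by any $y_i$ --- is precisely the correct argument). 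Where you diverge is in establishing the Gr\"obner property of the combined set: you run Buchberger's criterion directly on $NF_{\mathcal{G}}(\mathcal{F}) \cup \mathcal{G}$, which forces the delicate bookkeeping you flag as the main obstacle --- transporting a standard representation of $S(f_i,f_j)$ through the substitution $y_l \mapsto g_l$ while controlling leading terms (this does work: since $\mathrm{in}_< NF_{\mathcal{G}}(f_i) = \mathrm{in}_< f_i$, the discrepancy $f_i - NF_{\mathcal{G}}(f_i)$ has strictly smaller leading term, and the division-algorithm quotients obey the needed bounds, so all terms stay below the relevant lcm). The paper sidesteps this entirely: it first checks that $\mathcal{F} \cup \mathcal{G}$ is a Gr\"obner basis (S-pairs inside $\mathcal{F}$ or inside $\mathcal{G}$ reduce by hypothesis; mixed pairs by Buchberger's first criterion, exactly as in your coprimality argument), and then observes that $NF_{\mathcal{G}}(\mathcal{F}) \cup \mathcal{G}$ lies in the same ideal and has the same set of leading terms, hence is automatically a Gr\"obner basis of that ideal --- a one-line transfer that uses the hypothesis $\mathrm{in}_< NF_{\mathcal{G}}(f_i) = \mathrm{in}_< f_i$ in place of your S-pair analysis. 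So your proof is sound once the degree bookkeeping is written out, but the ``same ideal, same leading terms'' trick buys exactly the avoidance of that work.
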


\begin{proof}
First of all, our assumptions about the set of polynomials $\mathcal{F}$ and $\mathcal{G}$
guarantee that the set $\mathcal{F} \cup \mathcal{G}$ forms a Gr\"obner basis
for the ideal they generate.  This is because all S-polynomials formed from pairs of
elements in $\mathcal{F}$ or $\mathcal{G}$ reduce to zero, by the Gr\"obner basis
assumption, and any S-polynomial $S(f_i, y_j-g_j)$ reduces to zero by 
Buchberger's first criterion (see \cite[Ch. 5.5]{BeckerWeispfenning}), since the initial terms of $f_i$ and $y_j-g_j$ are
relatively prime.

Since it has the same leading terms, the set of polynomials  $NF_\mathcal{G}( \mathcal{F}) 
\cup \mathcal{G}$ is a Gr\"obner basis for the same ideal. By our assumptions, the set 
$NF_\mathcal{G}( \mathcal{F})$ is a subset of $\kk[x_1, \ldots, x_n]$.  Let $f \in
\langle NF_\mathcal{G}( \mathcal{F}) \rangle \subseteq \kk[x_1, \ldots, x_n]$.
This polynomial reduces to zero by $NF_\mathcal{G}( \mathcal{F}) 
\cup \mathcal{G}$ since that set is a Gr\"obner basis for the ideal
$\langle NF_\mathcal{G}( \mathcal{F})  \cup \mathcal{G} \rangle$.  However,
since $f \in \kk[x_1, \ldots, x_n]$, it does not have any terms divisible by
any leading term of a polynomial in $\mathcal{G}$.  So $f$ reduces to zero
by applying the division algorithm with $NF_\mathcal{G}( \mathcal{F})$.
This implies that $NF_\mathcal{G}( \mathcal{F})$ is a Gr\"obner basis for 
$\langle NF_\mathcal{G}( \mathcal{F}) \rangle$.
\end{proof}

\begin{proof}[Proof of Proposition \ref{prop:grobnerInvertible}]
We first consider the ideals $J_{\rm full}(w)$. Extend $w\in S_n$ to a permutation $\tilde{w}\in S_{2n}$ by $\tilde{w}(i) = w(i)$, $1\leq i\leq n$, and  $\tilde{w}(i) = i$, $n+1\leq i\leq 2n$. Then, $\tilde{w}\in [1,w_\square]\subseteq S_{2n}$. By definition of $J_{\rm full}(w)$, it is clear that $J_{\rm full}(w)\subseteq I_{\rm full}(\tilde{w})$. For the reverse inclusion, we observe that the essential minors of $I(\tilde{w})_{\rm full}$ agree with the essential minors of $J_{\rm full}(w)$. Thus $J_{\rm full}(w)$ is prime and generated by its essential minors. The Gr\"obner basis statement appears in \cite{Knutson2005}.

Next consider the ideals $J_{\rm up}(w)$. Extend $w\in S_n$ to $\tilde{w}\in S_{2n}$ where this time $\tilde{w}(i) = w(i)$ when $1\leq i\leq n$ and $\tilde{w}(i) = 3n+1-i$ for $n+1\leq i\leq 2n$. 
It is again clear by definition that $J_{\rm up}(w)\subseteq I_{\rm up}(\tilde{w})$. For the reverse inclusion, we use Remark \ref{rmk:essMinorsUpper} along with the observation that the essential minors of $I_{\rm up}(\tilde{w})$ and the essential minors of $J_{\rm up}(w)$ agree. Thus, $J_{\rm up}(w)$ is prime and is generated by its essential minors.


To obtain the Gr\"obner basis result, we apply Lemma \ref{grobner:extra}
where $\mathcal{F}$ is the set of essential minors of $J_{\rm full}(w)$
and $\mathcal{G}$ is the set of variables below the diagonal.  The set
$NF_\mathcal{G}(\mathcal{F})$ consists of the upper triangular minors
which form a Gr\"obner basis by Lemma \ref{grobner:extra}.  Finally substitute
$y_{ij}$ variables for the remaining $x_{ij}$ variables.

Now consider the ideals $J_{\rm sym}(w)$. Extend each $w\in S_n$ to a permutation $\tilde{w}\in C_n\subseteq S_{2n}$ by $\tilde{w}(i) = w(i)$ and $\tilde{w}(2n+1-i) = 2n+1-w(i)$ for $1\leq i\leq n$. Then $\tilde{w}$ defines a prime Kazhdan-Lusztig ideal $I_{\rm sym}(\tilde{w})$. 
In fact $I_{\rm sym}(\tilde{w}) = J_{\rm sym}(w)$. 
Among the generators of $I_{\rm sym}(\tilde{w})$, the minors arising from
$\widetilde\Sigma_{[1,i],[j,2n]}$ are not essential minors when $i,j\leq n$ or $i,j\geq n+1$.
Those with $i\leq n$ and $j\geq n+1$ are the generators of $J_{\rm sym}(w)$,
and those with $i\geq n+1$ and $j\leq n$ replicate these: the minors containing
as many ones as possible from the $J_n$ blocks in $\widetilde\Sigma$ are
the reflections in the main diagonal of $J_{\rm sym}(w)$, up to sign, and the other minors are redundant.
Thus $J_{\rm sym}(w)$ is a prime ideal and is generated by its essential minors.


To obtain the Gr\"obner basis result, we apply Lemma \ref{grobner:extra}
where $\mathcal{F}$ is the set of essential minors of $J_{\rm full}(w)$
and 
$$
\mathcal{G} =  \{ x_{ji} - x_{ij} : 1 \leq  i < j  \leq n\}.
$$  
To make this work, we extend our term order that picks the diagonal leading
terms of the minors whose diagonals are above the main diagonal of $X$ to
a term order on all variables of $X$ by always requiring the $x_{ji} > x_{ij}$ for
$i < j$.  This assumption guarantees that the initial terms remain unchanged on
computing normal forms. Upon computing the polynomials $NF_\mathcal{G}(\mathcal{F})$
and substituting $x_{ij} = \sigma_{ij}$ the resulting set of polynomials consists of the
essential symmetric minors.
\end{proof}

\begin{rmk}
One might wonder if our method of proof extends to show that the essential minors of each ideal $I_{\text{sym}}(v)$ from Section \ref{sect:flag} form a Gr\"obner basis. 
We do not see how to do this: in Proposition~\ref{prop:grobnerInvertible}, the assumption that $w$ is a permutation in $S_n$ is used to 
guarantee that the leading term of each essential minor in $J_{\text{full}}(w)$ consists only of variables lying in the upper triangular part of $X$. This is what allows us to apply Lemma \ref{grobner:extra}.
\end{rmk}

\begin{rmk}\label{rmk:no smS ideals}
The definition of Schubert determinantal ideals is often extended to include ideals determined by North-East ranks of \textbf{partial} permutation matrices. We note that one cannot make the analogous extension in the symmetric and upper triangular settings.  
This is for two reasons: the first is that there exist partial permutation matrices for which there are no upper triangular (resp.\ symmetric) matrices that satisfy the corresponding North-East rank conditions, 
and the second is that even when there are matrices which satisfy the given North-East rank conditions, the corresponding North-East rank ideals often fail to be prime.  We provide examples of this below. 
\end{rmk}

\begin{ex}
Let $u$ be the $2\times 2$ partial permutation matrix
\[
u = \begin{bmatrix}
0&0\\1&0
\end{bmatrix}.
\]
Observe that there are no upper triangular (or symmetric) matrices $M$ which have the property that $M_{[1,i],[j,n]} = w_{[1,i],[j,n]}$, $i,j\in [2]$.

Next consider the partial permutation matrix
\[
v = \begin{bmatrix}0&1\\0&0\end{bmatrix}.
\]
If we extended the definition of upper triangular Schubert determinantal ideal to include ideals defined by partial permutation matrices, then, in this case, we would get the ideal generated by the determinant of $Y$ (i.e. $\langle y_{11}y_{22}\rangle$) which has two components.

Finally, consider the partial permutation matrix
\[w = \begin{bmatrix}0&0&0&1\\0&0&0&0\\1&0&0&0\\0&0&0&0\end{bmatrix}.\]
If we extended the definition of symmetric Schubert determinantal ideal to include ideals defined by partial permutation matrices, then such an ideal for the partial permutation $w$ would be generated by the $2\times 2$ minors of
its submatrices $\Sigma_{[1,2],[1,4]}$ and $\Sigma_{[1,4],[2,4]}$.
This ideal has two prime components.
We provide futher explanation in Example \ref{ex: symmetricDecompose}.
\end{ex}

\begin{prop}\label{prop:bruhat}
Let $\bullet$ denote one of ``full'', ``sym'', or ``up'', and let $v_1, v_2\in S_n$.
The sum $J_\bullet(v_1) + J_\bullet(v_2)$ is reduced, and is the intersection of other ideals of the same type. That is,
\[
J_\bullet(v_1)+J_\bullet(v_2)= J_\bullet(w_1) \cap J_\bullet(w_2) \cap\cdots \cap J_\bullet(w_k)
\]
for some permutations $w_1,\dots, w_k\in S_n$. 
Furthermore, the poset of all $J_\bullet(v)$, ordered by inclusion, is isomorphic to the Bruhat poset of $S_n$.
\end{prop}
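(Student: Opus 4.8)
The plan is to deduce all three assertions from Proposition~\ref{prop:stratification} by means of the embeddings of $S_n$ into the relevant Weyl groups that are already implicit in the proof of Proposition~\ref{prop:grobnerInvertible}. Write $\iota_\bullet\colon S_n\hookrightarrow\Gamma_\bullet$ for the map $w\mapsto\tilde w$ used there, so that $\Gamma_{\rm full}=\Gamma_{\rm up}=S_{2n}$, $\Gamma_{\rm sym}=C_n$, and in every case $J_\bullet(w)=I_\bullet(\iota_\bullet(w))$. The structural facts I would first record, all read off from the one-line notation: $\iota_{\rm full}(S_n)$ is the standard parabolic subgroup $W_J\le S_{2n}$ with $J=\{s_1,\dots,s_{n-1}\}$ (the permutations fixing $n+1,\dots,2n$ pointwise); $\iota_{\rm sym}(S_n)$ is the standard parabolic of $C_n$ generated by the type-$A$ simple reflections $s_1,\dots,s_{n-1}$; and $\iota_{\rm up}(w)=\iota_{\rm full}(w)\,w_{\rm up}$, so $\iota_{\rm up}(S_n)=W_J\,w_{\rm up}$ is a single right coset of $W_J$ in $S_{2n}$ for which $w_{\rm up}$ is a minimal-length representative. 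In each case the image is contained in the Bruhat interval named in the corresponding item of Proposition~\ref{prop:stratification}.

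The poset statement follows quickly: by Proposition~\ref{prop:stratification} the map $v\mapsto I_\bullet(v)$ is a poset isomorphism from the ambient Bruhat interval (with Bruhat order) onto the poset of all $I_\bullet(v)$ ordered by inclusion, so the poset of all $J_\bullet(w)$ is isomorphic to $\iota_\bullet(S_n)$ with the induced Bruhat order. For $\bullet\in\{{\rm full},{\rm sym}\}$ this is a standard parabolic, whose induced order is its intrinsic Bruhat order and hence is $(S_n,\le)$ via $\iota_\bullet$; for $\bullet={\rm up}$, right multiplication by $w_{\rm up}$ is a Bruhat-order isomorphism from $W_J$ onto $W_J\,w_{\rm up}$ (lengths add, since $w_{\rm up}$ is a minimal coset representative), so again the poset is $(S_n,\le)$.

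For the reduced-and-intersection statement, fix $v_1,v_2\in S_n$. Then $J_\bullet(v_1)+J_\bullet(v_2)=I_\bullet(\iota v_1)+I_\bullet(\iota v_2)$, which by Proposition~\ref{prop:stratification} is radical and equal to $I_\bullet(u_1)\cap\dots\cap I_\bullet(u_k)$, where (using the poset isomorphism) $u_1,\dots,u_k$ are exactly the Bruhat-minimal elements of the ambient interval that lie above both $\iota v_1$ and $\iota v_2$; it remains to see that each $u_i$ lies in $\iota_\bullet(S_n)$, for then $u_i=\iota_\bullet(w_i)$ with a unique $w_i\in S_n$ and $I_\bullet(u_i)=J_\bullet(w_i)$. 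For $\bullet\in\{{\rm full},{\rm sym}\}$, put $\mathsf P=\iota_\bullet(S_n)$ and invoke the standard fact that for any $z\in\Gamma_\bullet$ the set $\mathsf P\cap[e,z]$ is an interval $[e,p(z)]$ of $\mathsf P$ for a (necessarily unique) element $p(z)\in\mathsf P$: given a minimal $u_i$ as above, $\iota v_1,\iota v_2\in\mathsf P\cap[e,u_i]$, hence $\iota v_1,\iota v_2\le p(u_i)\le u_i$, so $p(u_i)$ lies in $\mathsf P$ (hence in the ambient interval), is above both $\iota v_1,\iota v_2$, and is $\le u_i$; minimality of $u_i$ forces $u_i=p(u_i)\in\mathsf P$. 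For $\bullet={\rm up}$ I would instead bootstrap from the ``full'' case just proved: with $\phi\colon\mathbb{K}[X]\twoheadrightarrow\mathbb{K}[Y]$ the surjection killing the below-diagonal variables, one has $\phi(J_{\rm full}(w))=J_{\rm up}(w)$, equivalently $\phi^{-1}(J_{\rm up}(w))=J_{\rm full}(w)+\langle x_{ij}:i>j\rangle=I_{\rm full}(\iota_{\rm up}(w))$ (exactly the identification used in the proof of Proposition~\ref{prop:grobnerInvertible}); therefore $\phi^{-1}(J_{\rm up}(v_1)+J_{\rm up}(v_2))=\bigl(\bigcap_i J_{\rm full}(w_i)\bigr)+\langle x_{ij}:i>j\rangle$, and this ideal coincides with $\bigcap_i\bigl(J_{\rm full}(w_i)+\langle x_{ij}:i>j\rangle\bigr)$ because both are radical (the former since its $\phi$-image $J_{\rm up}(v_1)+J_{\rm up}(v_2)$ is radical by Proposition~\ref{prop:stratification}, the latter since each $J_{\rm full}(w_i)+\langle x_{ij}:i>j\rangle=I_{\rm full}(\iota_{\rm up}(w_i))$ is prime) and they define the same set; applying $\phi$ then yields $J_{\rm up}(v_1)+J_{\rm up}(v_2)=\bigcap_i J_{\rm up}(w_i)$ with the same permutations $w_i$.

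The step I expect to be the main obstacle is the single assertion ``$u_i\in\iota_\bullet(S_n)$'' --- geometrically, that intersecting two matrix Schubert varieties of a fixed type never produces a component coming from a stratum outside the matrix-Schubert locus. (For the ``full'' case this is classical, see \cite{Fulton}, \cite{Knutson2005}; the point of the present argument is to obtain all three types uniformly from Proposition~\ref{prop:stratification}.) For ``full'' and ``sym'' this rests entirely on the Coxeter-combinatorial input that a standard parabolic meets every principal lower Bruhat interval $[e,z]$ in an interval $[e,p(z)]$ (the ``Bruhat projection'' onto a parabolic), which one must pin down in the literature and verify applies in type $C$; the ``up'' case then follows from the displayed ring-theoretic manipulation, whose only non-formal ingredient is that two radical ideals with the same zero set are equal. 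Everything else is bookkeeping with Proposition~\ref{prop:stratification} and the identifications $J_\bullet(w)=I_\bullet(\iota_\bullet(w))$.
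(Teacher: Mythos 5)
Your argument is correct, and it shares the paper's skeleton---identify $J_\bullet(v)=I_\bullet(\tilde v)$, decompose the sum via Proposition~\ref{prop:stratification}, then show every component is an extension $\tilde w$---but the decisive last step is carried out by genuinely different means. The paper argues case by case: for ``up'' it observes that the components contain $\langle x_{ij}\mid i>j\rangle$ and lie below $w_\square$, and for ``sym'' it runs an iterated replacement argument, trading $I_{\rm full}(w_i')$ for $I_{\rm full}(w_i')+I_{\rm full}(w_0w_i'w_0)$ and re-decomposing until all permutations commute with $w_0$. You instead invoke, uniformly for ``full'' and ``sym'', the parabolic map of van den Hombergh and Billey--Fan--Losonczy (for every Coxeter system and every $z$, the set $W_J\cap[e,z]$ has a unique maximal element), which is indeed a citable theorem valid in type $C$, and it instantly yields the only nontrivial point: a Bruhat-minimal common upper bound of two elements of a standard parabolic lies in that parabolic. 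Your ``up'' case is then an algebraic bootstrap through the quotient killing the below-diagonal variables. What this buys: a single transparent mechanism for ``full'' and ``sym'', and an ``up'' argument that does not rest on the implication ``$w_{\rm up}\le w'\le w_\square$ forces $w'(j)=3n+1-j$ for $j>n$'', which is false as literally stated (e.g.\ $s_2s_3\in[w_{\rm up},w_\square]$ for $n=2$, indexing the stratum $\langle y_{11}\rangle$); the paper's conclusion survives because its $w_i'$ are additionally least upper bounds of the extensions, but your reduction sidesteps that extra input. What it costs: reliance on an external Coxeter-combinatorial theorem, on the standard facts that Bruhat order on $W_J$ and on the coset $W_J w_{\rm up}$ is the induced order (lifting/property~Z), and, in the ``up'' step, on comparing two radical ideals with the same zero set---a Nullstellensatz step that should be run over $\overline{\mathbb{K}}$ and descended by faithful flatness, or simply replaced by the explicit generating-set identity already established in the proof of item~(3) of Proposition~\ref{prop:stratification}.
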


\begin{proof}
Both statements are known in the ordinary matrix Schubert variety setting \cite{Knutson2005}. We provide a proof here that works in each case. 

To get the first statement, we extend $v_1$ and $v_2$ in $S_n$ to permutations $\tilde{v}$, and $\tilde{w}$ in $S_{2n}$ in the same way as done in the proof of Proposition \ref{prop:grobnerInvertible}. Then, $J_\bullet(v_i) = I_\bullet(\tilde{v}_i)$. Since each $I_\bullet(\tilde{v}_i)$ is a Kazhdan-Lusztig ideal,
\[
J_\bullet(v_1)+J_\bullet(v_2) = I_\bullet(w_1') \cap I_\bullet(w_2')\cap\cdots\cap I_\bullet(w_k')
\]
where each $I_\bullet(w_i')$ is a Kazhdan-Lusztig ideal. It remains to show that each $w_i'\in S_{2n}$ is actually equal to an extension $\tilde{w_i}\in S_{2n}$ for some $w_i\in S_n$. 
For this, we use a refinement of Proposition~\ref{prop:stratification}(a):
if $\bullet$ denotes ``full'', then the $w_i'$ that appear in the decomposition above
are the least upper bounds for $\tilde v_1$ and~$\tilde v_2$ in Bruhat order.
In particular, all $w_i'$ are less than or equal to $w_\square$.

In the upper-triangular setting, $I_{\rm full}(\tilde v_1)$ and $I_{\rm full}(\tilde v_2)$ both contain 
$\langle x_{ij}\mid i>j \rangle$, whence each of the $I_{\rm full}(w_i')$ do as well.  
As above, the $w_i'$ are also less than or equal to $w_\square$.  
So they must have $w_i'(j) = 3n+1-j$ for each $j\geq n+1$, i.e.\ they must be
of the form $\tilde w_i$.

In the symmetric setting, we claim that all of the $w_i'$ may be taken to lie in the set $C_{n}$
of Proposition~\ref{prop:stratification}(b).  Suppose some $w_i'$ does not.  Under the map
\[
\pi_\Sigma: \mathbb{K}[X] \rightarrow \mathbb{K}[\Sigma], \quad x_{ij} \mapsto \sigma_{ij}
\]
from before, $I_{\rm full}(w_i')$ and $I_{\rm full}(w_0w_i'w_0)$ have the same image,
namely $I_{\rm sym}(w_i')$.  Therefore, in the ``full'' version of the decomposition,
$I_{\rm full}(w_i')$ is equal to $I_{\rm full}(w_i')+I_{\rm full}(w_0w_i'w_0)$,
which in turn is equal to an intersection of ideals of form $I_{\rm full}(w_j'')$ 
for some permutations $w_j''$.
Replace $w_i'$ by the collection of $w_j''$ in the ``sym'' decomposition.  
Since each $w_j''$ is strictly greater than $w_i'$ in Bruhat order and bounded above by $w_\square$,
only a finite number of successive replacements of this form will be possible,
after which point all the permutations involved lie in $C_{n}$.
Finally, being less than or equal to $w_\square$,
these permutations are all of the form $\tilde w_i$.

To obtain the last statement, just note that this is true in the ordinary matrix Schubert variety case \cite{Knutson2005}, and the same inclusion order of ideals holds in the other two cases.
\end{proof}

There are a number of important relationships between the ideals $J_{\rm full}(w)$, $J_{\rm sym}(w),$ $J_{\rm up}(w)$
which will be useful when relating them to Gaussian graphical models.

\begin{prop}\label{prop:degentoup}
For each $w \in S_{n}$, there are weight orders $\tau_{\rm sym}$ and $\tau_{\rm full}$ so that 
$$J_{\rm up}(w)  =  {\rm in}_{\tau_{\rm full}}(J_{\rm full}(w))  \quad \quad \mbox{ and } \quad \quad  
J_{\rm up}(w)  =  {\rm in}_{\tau_{\rm sym}}(J_{\rm sym}(w)).$$
\end{prop}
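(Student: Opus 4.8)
The plan is to exhibit explicit weight vectors $\tau_{\rm full}$ and $\tau_{\rm sym}$ and then argue that degenerating along them sends the essential minors of $J_{\rm full}(w)$ (respectively $J_{\rm sym}(w)$) to the essential minors of $J_{\rm up}(w)$, and that this degeneration is flat so that initial ideals of the generators generate the initial ideal. The natural choice is a weight that ``pushes'' all the below-diagonal variables to weight $0$ relative to the on-or-above-diagonal variables; concretely, take $\tau_{\rm full}$ to assign weight $0$ to $x_{ij}$ with $i \le j$ and a large positive weight to $x_{ij}$ with $i > j$ (for instance weight $i-j$, or any strictly positive function that is monotone enough), and take $\tau_{\rm sym}$ similarly on the $\sigma_{ij}$ with $i > j$. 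Under such a weight, the lowest-weight part of a minor of $X_{[1,i],[j,n]}$ is exactly the part of that minor supported on the variables $x_{k\ell}$ with $k \le \ell$; since the rank matrix $R(w)$ is the same in all three settings, this lowest-weight part is precisely the corresponding minor read inside $Y_{[1,i],[j,n]}$ (with the below-diagonal entries set to $0$).

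The key steps, in order: \textbf{(1)} Recall from Proposition \ref{prop:grobnerInvertible} that the essential minors of $J_{\rm full}(w)$ form a Gr\"obner basis with respect to any diagonal term order, and that the same holds for $J_{\rm sym}(w)$ and $J_{\rm up}(w)$. \textbf{(2)} Fix a diagonal term order refining $\tau_{\rm full}$ (possible since $\tau_{\rm full}$ is a nonnegative weight, so one may break ties with any diagonal term order). The initial monomial of each essential minor of $J_{\rm full}(w)$ is its main-diagonal product; because $w \in S_n$, this diagonal product involves only on-or-above-diagonal variables (this is exactly the observation highlighted in the Remark following Proposition \ref{prop:grobnerInvertible}, and it is where membership in $S_n$ is used). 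Hence each essential minor has all its lowest $\tau_{\rm full}$-weight terms among the on-or-above-diagonal variables. \textbf{(3)} Conclude that ${\rm in}_{\tau_{\rm full}}$ of an essential minor of $J_{\rm full}(w)$ is the polynomial obtained by deleting every term that uses a below-diagonal variable; matching this against the definition of $J_{\rm up}(w)$ via the shared rank matrix $R(w)$, identify it with the corresponding essential minor of $J_{\rm up}(w)$ (after the identification $x_{ij} \leftrightarrow y_{ij}$ for $i \le j$). \textbf{(4)} Since the essential minors are a Gr\"obner basis (in fact a universal Gr\"obner basis under diagonal orders, which includes refinements of $\tau_{\rm full}$), the initial forms of the generators generate the initial ideal: ${\rm in}_{\tau_{\rm full}}(J_{\rm full}(w)) = \langle {\rm in}_{\tau_{\rm full}}(\text{essential minors})\rangle = J_{\rm up}(w)$. \textbf{(5)} Repeat verbatim for $\tau_{\rm sym}$, using the symmetric Gr\"obner basis result and the fact that the essential symmetric minors likewise have their diagonal leading terms in the on-or-above-diagonal part, so their $\tau_{\rm sym}$-lowest-weight parts recover the essential upper-triangular minors.

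The main obstacle I anticipate is step \textbf{(4)}: one must be careful that a weight vector (as opposed to a genuine term order) still lets us pass from ``initial forms of a Gr\"obner basis'' to ``generators of the initial ideal.'' The clean way around this is to invoke the standard fact that if $\mathcal G$ is a Gr\"obner basis of $I$ with respect to a term order $\prec$ that refines the weight $\tau$, then $\{{\rm in}_\tau(g) : g \in \mathcal G\}$ generates ${\rm in}_\tau(I)$ (see e.g.\ the theory of Gr\"obner degenerations); since Proposition \ref{prop:grobnerInvertible} gives the Gr\"obner basis property for \emph{every} diagonal term order, we may pick one refining $\tau_{\rm full}$. A secondary point requiring care is verifying that ${\rm in}_{\tau_{\rm full}}$ of an essential minor is not merely contained in $J_{\rm up}(w)$ but equals the named essential upper-triangular minor on the nose — this is the combinatorial heart and amounts to checking that the Laplace-type expansion of the minor of $X_{[1,i],[j,n]}$, restricted to its on-or-above-diagonal monomials, is literally the minor of $Y_{[1,i],[j,n]}$; this is immediate once one notes both are governed by the identical rank value $R(w)_{ij}$.
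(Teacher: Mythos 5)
Your treatment of $J_{\rm full}(w)$ is essentially the paper's own argument: the paper takes the equivalent (max-weight) choice $\tau_{\rm full}(x_{ij})=1$ for $i\leq j$ and $0$ for $i>j$, observes that the initial form of each minor is the corresponding upper triangular minor, and concludes equality by compatibility with the diagonal Gr\"obner degeneration. One small imprecision in your step (4): a term order obtained by refining $\tau_{\rm full}$ with a diagonal tie-break is \emph{not} itself a diagonal order on all of $\mathbb{K}[X]$ (e.g.\ for the submatrix with rows $\{2,3\}$ and columns $\{1,2\}$ the weight forces the off-diagonal term $x_{22}x_{31}$ to lead), so you cannot literally invoke ``Gr\"obner basis for every diagonal order'' for the refined order. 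This is repairable: since $w\in S_n$, the diagonals of the essential minors lie on or above the main diagonal, so their lead terms under the refined order are still the diagonal products, and equality of the two initial ideals follows by comparing with the common monomial degeneration (equal Hilbert functions) --- which is precisely what the paper's phrase ``compatible with the degeneration to the diagonal initial terms'' is doing.

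The symmetric half, your step (5), has a genuine gap. In $\mathbb{K}[\Sigma]$ there are no separate variables $\sigma_{ij}$ with $i>j$: $\sigma_{ij}$ and $\sigma_{ji}$ are the \emph{same} variable, so ``weight $0$ on $\sigma_{ij}$ with $i\leq j$ and a large positive weight on $\sigma_{ij}$ with $i>j$'' is not a well-defined weight, and no weight on variables can delete exactly the terms of a symmetric minor that use a below-diagonal \emph{position}: when a submatrix $\Sigma_{[1,a],[b,n]}$ crosses the main diagonal, the single variable $\sigma_{ij}$ ($i<j$) occurs both in the above-diagonal position $(i,j)$ and the below-diagonal position $(j,i)$ of that submatrix. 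Taken literally, your $\tau_{\rm sym}$ is the zero weight, giving ${\rm in}_{\tau_{\rm sym}}(J_{\rm sym}(w))=J_{\rm sym}(w)\neq J_{\rm up}(w)$ in general. The paper's fix is the position-sensitive weight $\tau_{\rm sym}(\sigma_{ij})=N-|j-i|$ for $N\gg 0$: in a $k\times k$ minor with rows $r_1<\cdots<r_k\leq a$ and columns $c_1<\cdots<c_k\geq b$, every matching staying weakly above the main diagonal has the same weight $kN-\bigl(\sum_t c_t-\sum_t r_t\bigr)$, while any matching using a below-diagonal position has strictly smaller weight, so the initial form is exactly the corresponding minor of $Y$; moreover the sorted (diagonal) matching minimizes $\sum_t|c_{\pi(t)}-r_t|$, so the diagonal term still leads after a diagonal tie-break and your Gr\"obner argument can then be run as in the full case. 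Without some such choice, step (5) does not go through as written.
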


In Proposition \ref{prop:degentoup}, we mean to take the initial ideal, and then
make an appropriate substitution of variables.

\begin{proof}
For the first statement, we choose the weighting on variables so that 
$\tau_{\rm full}(x_{ij}) = 1 $ if $i \leq j$ and $\tau_{\rm full}(x_{ij}) = 0 $ if $i > j$.
This clearly has the effect in each of the minors that terms above the main
diagonal will have weight the largest weight, so the leading term of a
single such determinant will be the corresponding determinant
of the upper triangular matrices.  This implies that 
$J_{\rm up}(w)  \subseteq  {\rm in}_{\tau_{\rm full}}(J_{\rm full}(w))$.    Since the resulting degeneration
is compatible with the degeneration to the diagonal initial terms
of these minors, and we have a Gr\"obner basis in that case, we deduce that $J_{\rm up}(w)  =  {\rm in}_{\tau_{\rm full}}(J_{\rm full}(w))$.

A similar argument works in the symmetric case, but here we need to take
the weight order $\tau_{\rm sym}(\sigma_{ij})  =  N-|j-i|$, for some very large $N$.  (Actually this would work in the generic case too.)
\end{proof}

Let $V_{\rm full}(w), V_{\rm sym}(w), V_{\rm up}(w)$ denote the vanishing sets of $J_{\rm full}(w)$, $J_{\rm sym}(w)$, and $J_{\rm up}(w)$ respectively. Each of these varieties
have nice parametrizations.  For the ordinary matrix Schubert varieties and in the 
upper triangular case, these are well-known and we repeat them here.
The parametrization for the symmetric case is related to the parametrization
for the upper triangular matrix Schubert variety.  We will need
this to establish the relationship to Gaussian graphical models in
the next section.

Let $s_{1}, \ldots, s_{n-1}$ be adjacent transposition generators of the
symmetric group $S_{n}$.  Let $w\in S_n$ and let $(i_{1}, \ldots, i_{k})$ be a reduced word
for $w_0 w$, in particular $w_0 w = s_{i_{1}} \cdots  s_{i_{k}}$, and $k = \ell(w_0w)$, the length of $w_0w$.
For each $i \in [n-1]$, let $X_{i}(t)$ be the Chevalley generator of the
unipotent group:  $X_{i}(t)  = I  + t e_{i,i+1}$, where $e_{i,i+1}$ is the $n\times n$ matrix with a $1$ in location $(i,i+1)$ and $0$s elsewhere, and where 
$I$ is the $n \times n $ identity matrix.  If $\ell(w_0w) = k$, let
$\phi_{w}:  \cc^{n + k}  \rightarrow  \textrm{Up}_n(\mathbb{C})$ be the map
$$
\phi(a_{1},\ldots, a_{n}, t_{1}, \ldots, t_{k})  =  
{\rm diag}(a_{1}, \ldots, a_{n})  X_{i_{k}}(t_{k})  \cdots  X_{i_{1}}(t_{1}). 
$$
Recall that $\textrm{Up}_{n}(\mathbb{C})$ denotes the set of upper triangular $n \times n$
complex matrices. 

This parametrization is due to Lusztig \cite{lusztig} (see also Fomin and Zelevinsky's result \cite[Theorem 4.4]{fominZelevinsky}). 

\begin{prop}\label{prop:param up}
For a permutation $w$,  the closure of the image of $\phi_{w}$ is the
upper triangular variety $V_{\rm up}(w):= \mathbb{V}(J_{\rm up}(w))$.
\end{prop}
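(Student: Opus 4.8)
The plan is to exhibit $V_{\rm up}(w)=\mathbb{V}(J_{\rm up}(w))$ and $\overline{\operatorname{Im}\phi_w}$ as irreducible closed subvarieties of $\textrm{Up}_n(\mathbb{C})$ of the same dimension with $\overline{\operatorname{Im}\phi_w}\subseteq V_{\rm up}(w)$; they must then coincide. Irreducibility of $V_{\rm up}(w)$ is immediate from the primeness of $J_{\rm up}(w)$ (Proposition~\ref{prop:grobnerInvertible}), and $\overline{\operatorname{Im}\phi_w}$ is irreducible as the closure of the image of the irreducible variety $\mathbb{C}^{n+k}$. For the dimensions I would use that $\operatorname{codim}_{\textrm{Up}_n(\mathbb{C})}V_{\rm up}(w)=\ell(w)$: this follows from the identification $J_{\rm up}(w)=I_{\rm up}(\tilde w)$ recorded in the proof of Proposition~\ref{prop:grobnerInvertible} together with the codimension formula for the strata of Section~\ref{sect:flag} (or from the squarefree Gr\"obner degeneration given there). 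Hence $\dim V_{\rm up}(w)=\binom{n+1}{2}-\ell(w)=n+\ell(w_0w)=n+k=\dim\mathbb{C}^{n+k}$.

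The first main step is to show $\operatorname{Im}\phi_w\subseteq V_{\rm up}(w)$. Because $V_{\rm up}(w)$ is closed and $\phi_w$ continuous, it is enough to check this on the dense open locus $W=\{(a,t):a_i\neq 0\text{ and }t_j\neq 0\ \text{for all }i,j\}$. For $(a,t)\in W$, write $M=\phi_w(a,t)=DN$ with $D=\operatorname{diag}(a_1,\dots,a_n)$ invertible and $N=X_{i_k}(t_k)\cdots X_{i_1}(t_1)$, a product of Chevalley generators with all $t_j$ nonzero. The northeast ranks $\rank M_{[1,i],[j,n]}$ are unchanged under left or right multiplication of $M$ by invertible lower-triangular matrices: left multiplication preserves the row span of the first $i$ rows and right multiplication preserves the column span of the last $n-j+1$ columns, either of which determines the rank in question. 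Moreover $X_i(t)$ lies in the Bruhat cell of the transposition $s_i$ (for the Bruhat decomposition of $GL_n(\mathbb{C})$ by the lower-triangular Borel $B^-$) when $t\neq 0$, and is the identity when $t=0$. Since $(i_1,\dots,i_k)$ is a reduced word for $w_0w$, its reverse $(i_k,\dots,i_1)$ is also reduced, so multiplying the corresponding cells gives $N\in B^-(w_0w)^{-1}B^-$, which is precisely the cell $B^-P(w)B^-$ containing $P(w)$ (as a linear map $P(w)=(w_0w)^{-1}$). Therefore $M=DN$ also lies in $B^-P(w)B^-$, so $\rank M_{[1,i],[j,n]}=\rank P(w)_{[1,i],[j,n]}=R(w)_{ij}$ for all $i,j$, giving $M\in V_{\rm up}(w)$. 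Passing to closures, $\overline{\operatorname{Im}\phi_w}\subseteq V_{\rm up}(w)$.

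The second step computes $\dim\overline{\operatorname{Im}\phi_w}$ by showing $\phi_w$ is generically injective. On $W$ the factor $N$ is upper-unitriangular, so the diagonal of $M=DN$ recovers $(a_1,\dots,a_n)$, then $N=D^{-1}M$ is determined, and finally $(t_1,\dots,t_k)$ is recovered from $N$ by the injectivity of the reduced-word Chevalley parametrization $(t_k,\dots,t_1)\mapsto X_{i_k}(t_k)\cdots X_{i_1}(t_1)$ of a cell in the unipotent group, which is the classical fact of Lusztig \cite{lusztig} (cf. Fomin--Zelevinsky \cite[Theorem~4.4]{fominZelevinsky}) invoked before the statement. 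Hence $\phi_w|_W$ is injective and $\dim\overline{\operatorname{Im}\phi_w}=n+k$. Combining the two steps, $\overline{\operatorname{Im}\phi_w}$ is an irreducible closed subvariety of the irreducible variety $V_{\rm up}(w)$ of the same dimension $n+k$, so they are equal, which is the assertion of the proposition.

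The step I expect to require the most care is the first one: locating the generic Chevalley product $N$ in the correct Bruhat cell and matching the $B^-\times B^-$-invariant rank function on that cell with the array $R(w)$, which forces one to keep several conventions aligned (whether $P(w)$ represents $w_0w$ or its inverse, on which side the triangular groups act, and which initial submatrices the ranks refer to). An alternative that avoids part of this bookkeeping is to quote from the theory of matrix Schubert varieties (\cite{Fulton},~\cite{Knutson2005}) that $V_{\rm full}(w)=\overline{B^-P(w)B^-}$ already carries this Chevalley parametrization, and then deduce the upper-triangular statement from the equality $V_{\rm up}(w)=V_{\rm full}(w)\cap\textrm{Up}_n(\mathbb{C})$, which is visible by comparing the generators of $J_{\rm up}(w)$ and $J_{\rm full}(w)$.
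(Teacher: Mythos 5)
Your argument is correct, and it shares its key external input with the paper -- the Lusztig/Fomin--Zelevinsky factorization -- but it finishes differently. The paper's proof is a one-step application of \cite[Theorem 4.4]{fominZelevinsky}: the map $\phi_w$ is a biregular isomorphism from the torus onto a Zariski open subset of $(B^-P(w)B^-)\cap B^+$, and the proof then simply asserts that the closure of this cell intersection in $\textrm{Up}_n(\mathbb{C})$ is $V_{\rm up}(w)$. You instead prove the containment $\overline{\operatorname{Im}\phi_w}\subseteq V_{\rm up}(w)$ by hand (generic Chevalley products land in the cell $B^-P(w)B^-$ by the BN-pair multiplication of cells along a reduced word, and North-East ranks are invariant under left/right multiplication by invertible lower triangular matrices, hence equal to $R(w)$ there), and then close the gap by irreducibility of $V_{\rm up}(w)$ from the primeness in Proposition~\ref{prop:grobnerInvertible} together with a dimension count, using the cited factorization only for generic injectivity of $\phi_w$. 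What this buys is a self-contained justification of exactly the point the paper leaves implicit (that the closure of $(B^-P(w)B^-)\cap B^+$ is cut out by $J_{\rm up}(w)$), at the cost of needing $\dim V_{\rm up}(w)=n+\ell(w_0w)$; your justification of that via $J_{\rm up}(w)=I_{\rm up}(\tilde w)$ and the codimension of the stratum is fine, though it silently uses $\ell(\tilde w)=\ell(w)+\binom{n}{2}$ (true, since the appended block $2n,\dots,n+1$ contributes exactly $\binom{n}{2}$ inversions and none across the blocks) and the codimension formula that the paper only records in an example; spelling those two lines out would make the dimension step airtight. Your convention bookkeeping ($P(w)$ representing $(w_0w)^{-1}$ as a linear map, reversal of the reduced word) is handled correctly.
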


\begin{proof}
This is a straightforward application of \cite[Theorem 4.4]{fominZelevinsky}. After making changes to account for our permutation conventions, their statement specializes to the following: let $w\in S_n$ and let $(i_{1}, \ldots, i_{k})$ be a reduced word
for $w_0 w$. Then,
$$
\phi(a_{1},\ldots, a_{n}, t_{1}, \ldots, t_{k})  =  
{\rm diag}(a_{1}, \ldots, a_{n})  X_{i_{k}}(t_{k})  \cdots  X_{i_{1}}(t_{1}). 
$$
is a biregular isomorphism from $\mathbb{C}^{n+k}_{\neq 0}$ to a Zariski open subset of \[(B_{-}{P(w)}B_{-})\cap B_{+},\] where $B_-$ and $B_+$ denote the sets of lower and upper triangular matrices respectively in $GL_n(\mathbb{C})$. Since the closure of $B_{-}{P(w)}B_{-}\cap B_{+}$ inside of $\textrm{Up}_n(\mathbb{C})$ is $V_{\rm up}(w)$, we get the desired result.
\end{proof}

The description for the symmetric variety arises from taking a transform
of the upper triangular variety.

\begin{prop}\label{prop:param sym}
Let $\psi:   \textrm{Up}_n(\mathbb{C})  \rightarrow  \textrm{Sym}_n(\mathbb{C})$,
$U  \mapsto  U^{T}U$.  Then   $\overline{\psi (V_{\rm up}(w))}  =  V_{\rm sym}(w)$.
\end{prop}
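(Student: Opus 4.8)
The plan is to prove the set equality $\overline{\psi(V_{\rm up}(w))} = V_{\rm sym}(w)$ by combining the Chevalley parametrization of $V_{\rm up}(w)$ from Proposition~\ref{prop:param up} with the degeneration relationships of Proposition~\ref{prop:degentoup}, reducing everything to a statement about the generic/upper-triangular case that is already available. First I would unwind $\psi \circ \phi_w$: if $U = \mathrm{diag}(a_1,\dots,a_n) X_{i_k}(t_k)\cdots X_{i_1}(t_1)$ is a point of $V_{\rm up}(w)$, then $\psi(U) = U^T U = X_{i_1}(t_1)^T \cdots X_{i_k}(t_k)^T \,\mathrm{diag}(a_1^2,\dots,a_n^2)\, X_{i_k}(t_k)\cdots X_{i_1}(t_1)$. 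This is manifestly of the shape $L^T D L$ with $L$ upper unitriangular and $D$ diagonal, i.e.\ a generic-looking $LDL^T$-type (Cholesky) factorization running over the parameters. So the image $\psi(V_{\rm up}(w))$ is the closure of the set of such products, and the goal becomes: this closure equals $V_{\rm sym}(w) = \mathbb{V}(J_{\rm sym}(w))$.

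The key steps, in order: (1) Show $\overline{\psi(V_{\rm up}(w))} \subseteq V_{\rm sym}(w)$ by a direct rank computation. Since $V_{\rm up}(w)$ is cut out by $R(w)_{ij}+1$ minors of the North-East submatrices $Y_{[1,i],[j,n]}$, and since for $U$ upper triangular the submatrix $(U^T U)_{[1,i],[j,n]}$ factors through $U_{[1,i],[1,n]}$ and $U_{[1,n],[j,n]}$, one gets $\mathrm{rank}(U^TU)_{[1,i],[j,n]} \le \min(\mathrm{rank}\,U_{[1,i],\ast}, \dots) \le R(w)_{ij}$ whenever $\mathrm{rank}\,U_{[1,i],[j,n]} \le R(w)_{ij}$; here I would need the elementary fact that for upper triangular $U$, $\mathrm{rank}\, U_{[1,i],[j,n]} = \mathrm{rank}(U^T U)_{[1,i],[j,n]}$, or at least the inequality $\le$, which follows since $U_{[1,i],[j,n]}$ is a ``window'' of $U$ and $U^TU$'s NE window is a product of the corresponding row/column bands. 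This gives the containment on the open dense parametrized locus, hence on the closure. (2) For the reverse containment, compare dimensions. The variety $V_{\rm sym}(w)$ is irreducible (by Proposition~\ref{prop:grobnerInvertible}, $J_{\rm sym}(w)$ is prime), so it suffices to show $\dim \overline{\psi(V_{\rm up}(w))} \ge \dim V_{\rm sym}(w)$ together with the containment from (1). For the dimension of the image I would use that $\psi$ restricted to the open cell (where all $a_i \ne 0$) is, generically, finite-to-one: $U \mapsto U^TU$ has fibers contained in $\{DU : D \text{ diagonal}, D^2 = I\}$, a finite set, so $\dim \overline{\psi(V_{\rm up}(w))} = \dim V_{\rm up}(w)$. (3) Finally match $\dim V_{\rm up}(w) = \dim V_{\rm sym}(w)$. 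Here Proposition~\ref{prop:degentoup} is exactly the tool: $J_{\rm up}(w) = \mathrm{in}_{\tau_{\rm sym}}(J_{\rm sym}(w))$ (after the variable substitution), and passing to an initial ideal preserves dimension, so $\dim \mathbb{K}[\Sigma]/J_{\rm sym}(w) = \dim \mathbb{K}[Y]/J_{\rm up}(w)$, i.e.\ $\dim V_{\rm sym}(w) = \dim V_{\rm up}(w)$. Combining (1), (2), (3) with irreducibility of $V_{\rm sym}(w)$ forces equality.

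I expect the main obstacle to be step (2)/(3) — pinning down that the parametrization map $\psi|_{V_{\rm up}(w)}$ does not drop dimension, i.e.\ that generic fibers of $U \mapsto U^TU$ over its image are finite (equivalently that a generic symmetric matrix in the image has only finitely many upper-triangular ``square roots'' of the relevant form). Over $\mathbb{C}$ this is essentially the statement that $LDL^T$ decompositions are unique up to signs on $D$ when the leading principal minors are nonzero, but I need it to persist generically on the (possibly lower-dimensional, non-principal-minor-generic) stratum $V_{\rm up}(w)$; the cleanest route is probably to observe that on the open subset where $a_1,\dots,a_n \ne 0$ the map $\phi_w$ is a biregular isomorphism onto its image (Proposition~\ref{prop:param up}), so $\psi \circ \phi_w$ has finite fibers iff $\psi$ does on that image, and then argue $\psi$ is generically finite there by a tangent-space / Jacobian computation at a single convenient point, or by the signs-on-diagonal uniqueness argument applied after a harmless coordinate change. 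An alternative that sidesteps the fiber analysis entirely: show the two ideals have the same initial ideal under $\tau_{\rm sym}$ — one checks $\mathrm{in}_{\tau_{\rm sym}}$ of the vanishing ideal of $\overline{\psi(V_{\rm up}(w))}$ is contained in, hence equals (by the containment from (1) and dimension count on the initial-ideal side), $J_{\rm up}(w)$ — but this seems to require at least as much work, so I would default to the dimension argument above.
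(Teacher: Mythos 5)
Your proposal is correct and follows essentially the same route as the paper's proof: the containment $\overline{\psi(V_{\rm up}(w))}\subseteq V_{\rm sym}(w)$ via the factorization $\Sigma_{[1,i],[j,n]} = U^{T}_{[1,i],[1,i]}U_{[1,i],[j,n]}$ for upper triangular $U$, generic finiteness of $\psi$ on $V_{\rm up}(w)$ from uniqueness of Cholesky-type factorizations up to diagonal signs on the dense invertible locus, equality of dimensions from the Gr\"obner degeneration in Proposition \ref{prop:degentoup}, and then irreducibility of $V_{\rm sym}(w)$ to force equality. The obstacle you flag in step (2) is resolved exactly as you suggest and as the paper does, by noting that $V_{\rm up}(w)$ contains invertible matrices since $w$ is a genuine permutation.
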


\begin{proof}
First of all, we will show that $\overline{\psi (V_{\rm up}(w))} \subseteq  V_{\rm sym}(w)$.
Indeed, we will show that $J_{\rm sym}(w) \subseteq I( \psi (V_{\rm up}(w)))$.
To see this, note that since $U$ is upper triangular, 
in the matrix $\Sigma = U^{T}U$, we have that
$$
\Sigma_{[1,i],[j,n]}  = U^{T}_{[1,i],[1,i]} U_{[1,i],[j,n]}. 
$$
Since $\rank U_{[1,i],[j,n]}  \leq  R(w)_{ij}$ we deduce that $\rank \Sigma_{[1,i],[j,n]} \leq
R(w)_{ij}$.

Since $w$ is a permutation and $V_{\rm up}(w)$ contains invertible matrices, we know that
$\psi (V_{\rm up}(w))$ contains positive definite matrices.  
The uniqueness of Cholesky decompositions
implies that $\phi$ is generically finite to one on $V_{\rm up}(w)$.
Since $V_{\rm up}(w)$ is a degeneration of $V_{\rm sym}(w)$ (Proposition \ref{prop:degentoup})
we deduce that $V_{\rm up}(w)$, $\psi (V_{\rm up}(w))$, and $V_{\rm sym}(w)$ all have the same dimension.
Since both $\psi (V_{\rm up}(w))$ and $V_{\rm sym}(w)$ are irreducible of the same dimension
and $\psi (V_{\rm up}(w)) \subseteq V_{\rm sym}(w)$, they must be equal.
\end{proof}


\section{Application to Gaussian Conditional Independence Models}\label{sect:applications}

In this section, we explain the applications of matrix Schubert varieties to 
the study of conditional independence structures for
Gaussian random variables.  We use Bruhat order on the symmetric group to study conditional independence implications, as well as characterize the vanishing ideals
of Gaussian graphical models associated to a particular
family of graphs, the generalized Markov chains.

Let $X = (X_{1}, \ldots, X_{n})\sim  \mathcal{N}(\mu, \Sigma)$ be an $n$-dimensional Gaussian random vector, and let $A$, $B$, and $C$ be disjoint subsets of $[n]$. 
Recall from the introduction that one associates to each conditional independence statement $A \ind B \mid  C$  the
\textbf{conditional independence (CI) ideal}
$$
J_{A \ind B\mid C}  =  \langle \#C +1 \mbox{ minors of } \Sigma_{A\cup C, B \cup C} \rangle  \subseteq \cc[\Sigma].
$$

\begin{prop}\label{prop:ci-split}
Let $A \ind B \mid  C$ be a conditional independence statement.  Then the
ideal $J_{A \ind B \mid C}$ is a symmetric Schubert determinantal ideal if and only if one of the following two conditions
are satisfied:
\begin{enumerate}
\item  $A = [1,i]$, $B = [j,n]$ and $C = \emptyset$, for some $i < j$, or
\item  $A = [1,i]$, $B = [j,n]$ and $C = [i+1, j-1]$, for some $i < j-1$.  
\end{enumerate}
\end{prop}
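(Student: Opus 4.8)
The plan is to characterize exactly when the CI ideal $J_{A \ind B \mid C}$, which is by definition generated by the $(\#C+1)$-minors of the submatrix $\Sigma_{A \cup C, B \cup C}$, coincides with some $J_{\rm sym}(w)$. The key observation is that a symmetric Schubert determinantal ideal $J_{\rm sym}(w)$ is, by construction, built from rank conditions on \emph{North-East justified} submatrices $\Sigma_{[1,i],[j,n]}$ — submatrices whose row set is an initial interval $[1,i]$ and whose column set is a terminal interval $[j,n]$. So the first direction (``if'') amounts to checking that in cases (1) and (2) the single CI rank condition on $\Sigma_{A\cup C, B\cup C}$ can be realized this way: in case (1), $A \cup C = [1,i]$ and $B \cup C = [j,n]$ already have the required form, and imposing that the $(\#C+1) = 1$-minors vanish on $\Sigma_{[1,i],[j,n]}$ is exactly a North-East rank condition; in case (2), $A \cup C = [1,j-1]$ and $B \cup C = [i+1,n]$ (note the overlap in $C$), again initial/terminal intervals, and the rank bound $\#C = j-1-i$ is again a North-East condition. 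In each case I would write down the permutation $w \in S_n$ whose rank matrix $R(w)$ produces precisely this one essential rank condition and no others — a rank-matrix/diagram computation — and invoke Proposition~\ref{prop:grobnerInvertible} to conclude $J_{\rm sym}(w)$ is generated by exactly the essential minors, which are these CI minors.

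For the converse (``only if''), suppose $J_{A \ind B \mid C} = J_{\rm sym}(w)$ for some $w \in S_n$. The CI ideal is generated by the minors of a \emph{single} submatrix $\Sigma_{A \cup C, B \cup C}$ with row set $A \cup C$ and column set $B \cup C$ (these are disjoint union with $C$ appearing in both, so they need not be intervals a priori). On the other hand $J_{\rm sym}(w)$ is generated by minors of North-East justified submatrices $\Sigma_{[1,i],[j,n]}$. The strategy is to argue that for the two generating sets to produce the same ideal, the row set $A\cup C$ must actually be an initial interval $[1,i]$ and the column set $B \cup C$ a terminal interval $[j,n]$ — otherwise the CI ideal would contain minors involving variables $\sigma_{ab}$ with $a \notin [1,i]$ or $b \notin [j,n]$ in a way incompatible with any North-East rank pattern. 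Once the row and column sets are forced to be $[1,i]$ and $[j,n]$ respectively, the structure $A \sqcup C = [1,i]$, $B \sqcup C = [j,n]$ with $A,B,C$ pairwise disjoint forces $C = [1,i]\cap[j,n]$, i.e. either $C = \emptyset$ (when $i < j$) giving case (1), or $C = [j,i]$ is the overlap — and then matching $A = [1,\cdot]$ and $B = [\cdot, n]$ to be intervals complementary to $C$ forces the precise form in case (2) after relabeling indices.

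I expect the main obstacle to be the converse direction, specifically the step showing that equality of the two ideals forces $A \cup C$ and $B \cup C$ to be an initial and a terminal interval. The cleanest route is probably to compare minimal generating sets: by Proposition~\ref{prop:grobnerInvertible}, $J_{\rm sym}(w)$ is generated by its essential minors, and one can read off from the essential set which variables $\sigma_{ij}$ appear and in which minors; on the CI side, the minors of $\Sigma_{A\cup C, B\cup C}$ involve precisely the variables $\sigma_{ij}$ with $i \in A\cup C$, $j \in B \cup C$ (up to symmetry $\sigma_{ij} = \sigma_{ji}$). An argument via the support of the ideal, or via the radical/variety $V_{\rm sym}(w)$ and its description as a determinantal locus, should pin down that the defining rank conditions of a single-submatrix determinantal ideal can only match a symmetric Schubert determinantal ideal when that submatrix is North-East justified — and then a short combinatorial case analysis on disjoint $A, B, C$ with $A\cup C$ and $B \cup C$ intervals completes the classification. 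An alternative, possibly cleaner, approach is to use Proposition~\ref{prop:param sym}: $V_{\rm sym}(w) = \overline{\psi(V_{\rm up}(w))}$ is irreducible, so $J_{A\ind B\mid C}$ must in particular be prime, and primality of a one-submatrix determinantal (CI) ideal is itself quite restrictive — this could be used to cut down the cases quickly before doing the interval bookkeeping.
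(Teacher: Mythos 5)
Your overall strategy is the same as the paper's: match the single rank condition defining $J_{A \ind B \mid C}$ against the North-East rank conditions defining a symmetric Schubert determinantal ideal, and use the disjointness of $A$, $B$, $C$ to see that $C = (A\cup C)\cap(B\cup C)$ is exactly the diagonal overlap of the block, which produces the dichotomy between cases (1) and (2). Your ``if'' direction is fine and is what the paper does (the explicit permutations you propose to write down are exactly those recorded in Remark \ref{rmk:oneEssBox}, and generation by essential minors is Proposition \ref{prop:grobnerInvertible}).

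The gap is in the converse, precisely at the step you flag as the main obstacle: you never actually prove that $J_{A\ind B\mid C}=J_{\rm sym}(w)$ forces $A\cup C$ and $B\cup C$ to be an initial and a terminal interval, and of your two suggested routes only one can work. The primality shortcut is a dead end on its own: one-block CI ideals are typically prime whether or not the block is North-East justified --- for instance $A=\{2\}$, $B=\{3\}$, $C=\emptyset$ in a $3\times 3$ symmetric matrix gives $J_{A\ind B\mid C}=\langle \sigma_{23}\rangle$, which is prime but is not any $J_{\rm sym}(w)$ (a degree-one generator would force some North-East block to have rank bound $0$, putting $\sigma_{13}$ in the ideal). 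So primality cannot separate the cases; the support/essential-minor comparison is the route that works, and it is essentially the paper's argument: a symmetric Schubert determinantal ideal imposes only rank conditions $\rank \Sigma_{[1,i],[j,n]}\le r$, so the CI block must be of this North-East shape, after which disjointness pins down $C$ and the rank ($C=\emptyset$, $r=0$ when the block misses the diagonal, and $C=[j,i]$, $r=i-j+1$ when it meets it). To make your version rigorous you should say which invariant of the \emph{ideal} (not of a chosen generating set) you compare --- e.g.\ the variety, the degree-$(\#C+1)$ graded piece, or the minimal set of variables needed to generate the ideal --- since ``which variables appear in the generators'' depends a priori on the generating set. With that step supplied, your concluding interval bookkeeping is correct and agrees with the paper.
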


\begin{proof}
Each CI statement $A \ind B \mid C$ gives a rank condition on a submatrix 
of $\Sigma$.  Since CI statements always concern invertible
covariance matrices, we can restrict attention to the invertible
components from Section \ref{sect:invertible}.  
The resulting rank constraints that can arise have the form
$\rank  \Sigma_{[1,i], [j,n]}  \leq r$ for some $i$ and $j$ and
some $r$. 

 There are two ways that such rank conditions
can arise from conditional independence statements.  
If the matrix $\Sigma_{[1,i], [j,n]}$ does not intersect the
diagonal of $\Sigma$, then it must be that $r = 0$, and this corresponds
to the independence statement $[1, i] \ind [j,n]$.
If the matrix $\Sigma_{[1,i], [j,n]}$ does  intersect the
diagonal of $\Sigma$, then $r =  j-i + 1$ (the size of the diagonal overlap)
and we have the conditional independence statement  $[1, j-1] \ind [i+1,n] \mid [j,i]$.
These yield the two cases from the proposition.
\end{proof}

We will sometimes use the term \textbf{Schubert conditional independence ideal} to refer to ideals that are simultaneously symmetric Schubert determinantal ideals and conditional independence ideals.


\begin{rmk}\label{rmk:oneEssBox}
We record here the permutation $w$ associated to each Schubert CI ideal:
\begin{enumerate}
\item if $A$, $B$, and $C$ are as in type (1) of Proposition \ref{prop:ci-split}, then $J_{A\ind B\mid C} = J_{\rm sym}(w)$ where 
\[
w(i) = \left\{
\begin{array}{ll}
\#B+i, & 1\leq i\leq \#A\\
i-\#A, & \#A+1\leq i\leq \#A+\#B\\
i, & \#A+\#B+1\leq i\leq n
\end{array}
\right .
\]
\item if $A$, $B$, and $C$ are as in type (2), then $J_{A\ind B\mid C} = J_{\rm sym}(w)$ where
\[
w(i) = \left\{
\begin{array}{ll}
i, & 1\leq i\leq \#C\\
\#B+i, & \#C+1\leq i\leq \#A+\#C\\
i-\#A, & \#A+\#C+1 \leq i\leq n
\end{array}
\right .
\]
\end{enumerate}
\end{rmk}

In light of Proposition \ref{prop:ci-split}, one can use Bruhat order to deduce conditional implications in certain cases. The following examples illustrate this:


\begin{ex}\label{ex:elementary}
Let $m = 3$ and 
consider the set of conditional independence 
$\mathcal{C} = \{1 \ind 3,  1\ind 3 \mid  2 \}$.
Both of the conditional independence ideal $J_{1 \ind 3}$ and 
$J_{1 \ind 3 \mid 2}$  satisfy the conditions of Proposition
\ref{prop:ci-split} and hence are symmetric Schubert determinantal ideals. 
We have
\begin{eqnarray*}
J_{\mathcal{C}}  & = & J_{1 \ind 3} + J_{1 \ind 3 \mid 2}  \\
                 & =  & \langle \sigma_{13} , \sigma_{12}\sigma_{23} - \sigma_{13}\sigma_{22}  \rangle  \\
                 & = & \langle \sigma_{13} , \sigma_{12}  \rangle \cap
                 \langle \sigma_{13} , \sigma_{23}  \rangle  \\
                 & = &   J_{1 \ind \{2,3\} } \cap J_{ \{1,2\}\ind 3}.
\end{eqnarray*}
For gaussian random variables, we deduce the conditional independence
implication $\{1 \ind 3,  1\ind 3 \mid  2 \}  \implies 1 \ind \{2,3\} \mbox{ or } \{1,2 \}\ind 3$.

In the language of symmetric Schubert determinantal ideals,  
$J_{1 \ind 3} = $$J_{\rm sym}(213)$ 
and $J_{1 \ind 3 \mid 2} = J_{\rm sym}(132)$. 
The decomposition
is equivalent to the decomposition
$J_{\rm sym}(213)+J_{\rm sym}(132) = J_{\rm sym}(312)\cap J_{\rm sym}(231)$.
 \qed
\end{ex}

\begin{ex}
Here we consider a more complex example.  Consider the family of conditional independence statements:
$$
\mathcal{C}  =  \{  1 \ind \{3,4,5\} \mid  2,  \{1,2,3\} \ind 5 \mid  4,  1 \ind \{4,5\}  \}.
$$
Translating into symmetric Schubert determinantal ideals yields:
$$
J_{1 \ind \{3,4,5\} \mid  2} = J_{\rm sym}(15234), 
\quad J_{\{1,2,3\} \ind 5 \mid  4} = J_{\rm sym}(13452), $$
$$
\quad   J_{1 \ind \{4,5\}}  = J_{\rm sym}(31245).
$$
The least upper bounds 
of the set of elements  
$\{15234, 13452, 31245\}$
are the two elements
$\{35142, 51342 \}$
corresponding to the prime decomposition 
$$
J_{\rm sym}(15234)+J_{\rm sym}(13452)+J_{\rm sym}(31245) = J_{\rm sym}(35142)\cap J_{\rm sym}(51342).
$$
Translating the ideals on the righthand side of this equation to
conditional independence yields
$$
J_{\rm sym}(35142)=  J_{\{1, 2 \}\ind \{4, 5\}}  + J_{1 \ind \{3,4,5\} \mid  2} + J_{\{1,2,3\} \ind 5 \mid  4}
$$
$$
J_{\rm sym}(51342) =  J_{1 \ind \{2,3,4,5\}} + J_{\{1,2,3\} \ind 5 \mid  4}. 
$$\qed
\end{ex}

In each of the above examples, we had a sum of Schubert CI ideals equal to an intersection of symmetric Schubert determinantal ideals, where each symmetric Schubert determinantal ideal was itself a sum of Schubert CI ideals. As the next example illustrates, this is not true in general.

\begin{ex}\label{ex:weird}
Consider the family of conditional independence statements
$$
\mathcal{C} = \{ \{1,2\} \ind \{5,6\},  \{1,2\} \ind \{5,6\}  \mid   \{3,4\} \}.
$$
Translating into symmetric Schubert determinantal ideals yields:
$$
J_{\{1,2\}\ind \{5,6\}} = J_{\rm sym}(341256),
\quad J_{ \{1,2\} \ind \{5,6\}  \mid   \{3,4\} } = J_{\rm sym}(125634).
$$
The sum of these two ideals has prime decomposition
$$
J_{\rm sym}(341256)+J_{\rm sym}(125634) = J_{\rm sym}(345612)\cap J_{\rm sym}(561234) \cap J_{\rm sym}(351624).
$$
Now, each of the first two ideals in the above decomposition are CI ideals, namely
$$
J_{\rm sym}(345612) = J_{\{1,2,3,4\}\ind \{5,6\}},
\quad 
J_{\rm sym}(561234) = J_{\{1,2\}\ind \{3,4,5,6\}}.
$$
However, the last ideal in the prime decomposition is \emph{not} a sum of Schubert CI ideals; the permutation matrix $P(351624)$ has an essential box above the main diagonal associated to a rank-$1$ constraint, and this cannot come from a conditional independence ideal (see Proposition \ref{prop:ci-split}). 
\end{ex}

To characterize which symmetric Schubert determinantal ideals are sums of Schubert CI ideals, we recall Gasharov and Reiner's permutations \textbf{defined by inclusions} (see \cite{GasharovReiner} and also \cite[Section 3.1]{Woo}), which we define through the following theorem.

\begin{thm}\label{thm:inclusions}(see \cite[Theorem 4.2]{GasharovReiner} or \cite[Theorem 3.1]{Woo}) 
Let $w\in S_n$. The following are equivalent:
\begin{enumerate}
\item The permutation $w$ is defined by inclusions.
\item If $(i,j)\in \mathcal{E}ss(w)$, then either
\begin{enumerate}
\item there are no $1$s in $P(w)$ weakly North-East of $(i,j)$ (i.e. there is no $k$ with $k\leq i$ and $w(k)\leq j$); or
\item there are no $1$s in $P(w)$ strictly South-West of $(i,j)$ (i.e. there is no $k$ with $k>i$ and $w(k)>j$).
\end{enumerate}
\item The permutation $w$ avoids (in the usual sense of permutation pattern avoidance) $1324$, $31524$, $24153$, and $426153$.
\end{enumerate}
\end{thm}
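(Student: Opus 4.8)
The plan is to read the three conditions as three incarnations of one class of permutations and to establish the two substantive equivalences $(1)\Leftrightarrow(2)$ and $(2)\Leftrightarrow(3)$ directly, following Gasharov--Reiner \cite{GasharovReiner} and Woo \cite{Woo}. The unifying object is Fulton's essential set $\mathcal{E}ss(w)$ together with the rank function $R(w)$: by Fulton's theorem the matrix Schubert variety $V_{\rm full}(w)$ is cut out scheme-theoretically by the conditions $\rank X_{[1,i],[j,n]}\le R(w)_{ij}$ for $(i,j)\in\mathcal{E}ss(w)$, and this list is minimal in the strong sense that no single condition may be dropped or weakened.

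For $(1)\Leftrightarrow(2)$: among the rank conditions defining a matrix Schubert variety, those equivalent to an \emph{inclusion} condition -- one forcing a subspace of the moving flag to sit inside, or to contain, a fixed coordinate subspace -- are exactly the extreme ones, namely ``the submatrix vanishes identically'' or ``the submatrix has the largest rank its shape allows''; an intermediate rank value gives a genuinely determinantal condition. Conditions (2a) and (2b) say precisely that the essential rank condition at $(i,j)$ falls into the first, respectively the second, of these two extreme cases. Since $V_{\rm full}(w)$ is the intersection of its essential conditions, if all of them are of inclusion type then $V_{\rm full}(w)$ is literally defined by inclusions; conversely, using the strong minimality of Fulton's essential set one checks that a genuinely determinantal essential condition cannot be recovered from inclusion conditions holding on $V_{\rm full}(w)$, so a single essential box violating both (2a) and (2b) obstructs being defined by inclusions. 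Hence $(1)\Leftrightarrow(2)$.

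For $(2)\Leftrightarrow(3)$, both implications go by contraposition. If some $(i,j)\in\mathcal{E}ss(w)$ violates (2), then $P(w)$ has a $1$ weakly to the northeast of $(i,j)$ and a $1$ strictly to the southwest; because $(i,j)$ is a corner of the diagram of $w$, the boxes immediately south and immediately west of it leave the diagram, and this forces further $1$'s of $P(w)$ into prescribed positions near $(i+1,j)$ and $(i,j-1)$. Enumerating how these forced $1$'s can sit relative to the two original ones yields, in each case, an occurrence of one of $1324$, $31524$, $24153$, $426153$, the longer patterns appearing precisely when the forced $1$'s cannot be consolidated with the original two. Conversely, given an occurrence of one of the four patterns, the southeast corner of the rectangle it spans lies in the diagram of $w$ and has a $1$ both weakly northeast and strictly southwest of it; sliding this box south and west within the diagram as far as it will go -- which the shape of each pattern guarantees is possible without losing either $1$ -- lands on an essential box violating (2). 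This gives $(2)\Leftrightarrow(3)$, and the theorem follows.

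The hard part is the finite but delicate bookkeeping inside $(2)\Leftrightarrow(3)$: correctly listing the local configurations of $1$'s that a ``bad'' essential box forces and matching each with exactly one forbidden pattern, and conversely verifying that every occurrence of a forbidden pattern slides to a genuinely \emph{essential} bad box rather than merely to a bad box of the diagram. This case analysis is precisely what is carried out in \cite[\S3--4]{GasharovReiner} and \cite[\S3]{Woo}, and I would follow their treatment rather than reproduce it here.
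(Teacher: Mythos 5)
This statement is not proved in the paper at all: it is quoted from Gasharov--Reiner and Woo (the paper's only ``proof'' is the citation, plus the remark that the statement has been translated into the paper's nonstandard conventions, where $P(w)$ is the matrix of $w_0w$ and one uses North-East justified submatrices). Your proposal ultimately does the same thing -- both the $(2)\Leftrightarrow(3)$ case analysis and the substantive half of $(1)\Leftrightarrow(2)$ are explicitly deferred to \cite{GasharovReiner} and \cite{Woo} -- so as a matter of approach it is consistent with the paper. But judged as a proof sketch, the part you do write out has a concrete error and a hand-wave worth flagging.

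The error is in your description of which rank conditions are of inclusion type. The second extreme case is not ``the submatrix has the largest rank its shape allows'': that bound is $\min(i,n+1-j)$ and imposes no condition at all. Condition (2b), ``no $1$s strictly South-West of $(i,j)$,'' is equivalent (by inclusion--exclusion on the $n$ ones of $P(w)$) to $R(w)_{ij}=i-j+1$, which is the \emph{smallest} rank an invertible matrix permits for an $i\times(n+1-j)$ corner submatrix; the condition $\rank \le i-j+1$ then forces equality and translates into the containment $F_{j-1}\subseteq E_i$, while (2a), $R(w)_{ij}=0$, translates into $E_i\subseteq F_{j-1}$. Intermediate values of $R(w)_{ij}$ give genuinely determinantal, non-inclusion conditions. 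Second, your converse argument for $(1)\Rightarrow(2)$ via ``strong minimality of Fulton's essential set'' does not suffice as stated: minimality says an essential rank condition cannot be dropped from the list of essential conditions, but it does not by itself rule out that $X_w$ is cut out by a completely different family of inclusion conditions imposed at other (non-essential) boxes, whose conjunction could conceivably reproduce an intermediate rank condition. Showing this cannot happen -- i.e.\ comparing $X_w$ with the intersection of all inclusions valid on it -- is precisely the content of the Gasharov--Reiner argument, so this step cannot be waved through. If you intend to follow their treatment, you must also perform the conventions translation the paper alludes to, since their statements are phrased for the standard permutation matrix and North-West ranks.
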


Note that Theorem \ref{thm:inclusions} has been translated from the versions that appear in \cite{GasharovReiner} and \cite{Woo} to match our unusual permutation conventions.

Suppose $w\in S_n$ is a permutation defined by inclusions. If $(i,j)\in \mathcal{E}ss(w)$ is as in item $2~(a)$ above, then we say it is an essential box of type $1$. If $(i,j)\in \mathcal{E}ss(w)$ is as in $2~(b)$ above \emph{but not also as in $2~(a)$}, we say it is a type $2$ essential box. 

\begin{lem}\label{lem:typeCI}
Let $w\in S_n$. The symmetric Schubert determinantal ideal $J_{\rm sym}(w)$ is equal to a conditional independence ideal $J_{A\ind B\mid C}$ with $A, B, C\subseteq [n]$ if and only if $w$ is defined by inclusions with exactly one element in its essential set $\mathcal{E}ss(w)$. More precisely,  
\begin{enumerate}
\item $\mathcal{E}ss(w)$ has exactly one element and that element is of type $1$ if and only if $A = [1,i]$, $B = [j,n]$, and $C = \emptyset$ for some $i<j$, and
\item $\mathcal{E}ss(w)$ has exactly one element and that element is of type $2$ if and only if $A = [1,i]$, $B = [j,n]$, and $C = [i+1,j-1]$, for some $i<j-1$.
\end{enumerate}
\end{lem}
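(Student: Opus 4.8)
The plan is to translate the conditional-independence condition on $A, B, C$ into a statement about the essential set of $w$ via the explicit dictionary in Proposition~\ref{prop:ci-split} and Remark~\ref{rmk:oneEssBox}, and then to verify that the permutations appearing there are exactly the defined-by-inclusions permutations with a single essential box of the appropriate type. The forward direction is basically bookkeeping: if $J_{\rm sym}(w) = J_{A\ind B\mid C}$, then by Proposition~\ref{prop:ci-split} the triple $(A,B,C)$ is of type (1) or (2), and by Remark~\ref{rmk:oneEssBox} the corresponding $w$ is one of the two explicit permutations listed there. First I would compute the permutation matrix $P(w)$ and its diagram in each of these two families and simply read off that $\mathcal{E}ss(w)$ is a single box, located at $(\#A, \#B+1)$ in case (1) and at $(\#A+\#C, \#B+1)$ — reindexed appropriately — in case (2), and that this box is of type $1$ (no $1$s weakly North-East) in case (1) and of type $2$ (no $1$s strictly South-West, but there is a $1$ weakly North-East) in case (2). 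The genuine content here is checking the type: in case (1) the constraint is a rank-$0$ (vanishing) constraint on a submatrix disjoint from the diagonal, which forces all of $A\times B$ to lie outside the support of $P(w)$, hence no $1$s NE of the essential box; in case (2) the rank equals the size of the diagonal overlap, so there must be $1$s accounting for that overlap, which sit weakly NE, while nothing sits strictly SW.

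For the converse I would argue as follows. Suppose $w$ is defined by inclusions and $\mathcal{E}ss(w) = \{(i,j)\}$ is a single box. Let $r = \rank P(w)_{[1,i],[j,n]}$, so that $J_{\rm sym}(w)$ is generated by the size-$(r+1)$ minors of $\Sigma_{[1,i],[j,n]}$. I want to exhibit disjoint $A,B,C$ with $J_{A\ind B\mid C} = J_{\rm sym}(w)$; the natural candidates are forced by the shape of the submatrix: $A = [1,i]$ must be the rows, $B = [j,n]$ must be the columns, and if the submatrix meets the diagonal (i.e.\ $i \ge j$) then $C = [j,i]$ accounts for the overlap, while if $i < j$ then $C = \emptyset$. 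It then remains to check two things: that the rank $r$ is exactly $\#C$ (so that the number-of-minors bookkeeping in $J_{A\ind B\mid C}$ matches), and that the defining equations really do coincide (not just up to radical). The rank computation is where the ``defined by inclusions'' and ``type'' hypotheses enter: if $(i,j)$ is of type $1$, there are no $1$s weakly NE of it, so $P(w)_{[1,i],[1,j]}$ is zero, hence the rank of $P(w)_{[1,i],[j,n]}$ — which by the rank-function characterization of $R(w)$ equals $R(w)_{ij}$ — is the number of $1$s of $P(w)$ in rows $[1,i]$, and combined with type $1$ this is... here I would need to argue the submatrix lies strictly above the diagonal, giving $i < j$ and $r = 0 = \#C$. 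If $(i,j)$ is of type $2$, there are $1$s weakly NE but none strictly SW; the latter says rows $>i$ map to columns $\le j$, which (since $w$ is a permutation) pins down $w(k)$ for $k > i$ and forces $i \ge j-1$, and a short count gives $r = i-j+1 = \#[j,i] = \#C$ when $i \ge j$, or the degenerate subcase $i = j-1$ with $r = 0$.

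The step I expect to be the main obstacle is pinning down precisely which $(i,j)$ can occur as the \emph{unique} essential box and translating the type condition into the exact interval structure of $A$, $B$, $C$ — in particular verifying that a single type-$2$ essential box forces the ``staircase'' shape of $P(w)$ outside the block and hence that $C$ is a genuine interval $[i+1,j-1]$ (in the Remark~\ref{rmk:oneEssBox} indexing) rather than some scattered set, and dually for type $1$. This is really the claim that the permutations of Remark~\ref{rmk:oneEssBox}(1) and~(2) are \emph{characterized} by having one essential box of the respective type, which needs the full strength of Theorem~\ref{thm:inclusions}(2): the no-$1$s conditions on the two sides of the essential box, together with the fact that $(i,j)$ is essential (so the diagram does not extend South or West of it), squeeze $P(w)$ into exactly the block-permutation form written in the Remark. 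Once the permutation is identified, the equality $J_{\rm sym}(w) = J_{A\ind B\mid C}$ is immediate because both ideals are generated by the same set of minors of the same submatrix of $\Sigma$ — the minimal minors of the unique essential box are, by definition of $R(w)$ and the rank count just performed, exactly the $(\#C+1)$-minors of $\Sigma_{A\cup C, B\cup C}$.
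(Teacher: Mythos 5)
Your proposal is correct and follows essentially the paper's own route: both directions run through Proposition \ref{prop:ci-split} and Remark \ref{rmk:oneEssBox}, with the real work being the position-and-rank analysis at the unique essential box (type 1 forces the box strictly above the diagonal with rank $0$, type 2 forces $i\ge j$ with rank $i-j+1=\#C$), and where the paper writes out the one-line formula for $w$ and compares it with the Remark, you instead match the essential minors directly with the $(\#C+1)$-minors of $\Sigma_{A\cup C,B\cup C}$ -- the same substance, so the ``main obstacle'' you anticipate (recovering the full block form of $P(w)$) is not actually needed. Two harmless slips: the column coordinate of the essential box in case (1) is $n-\#B+1$ rather than $\#B+1$, and your ``degenerate subcase'' $i=j-1$ cannot occur for a type-2 box, since type 2 by definition excludes type 1 and hence requires a $1$ weakly North-East of the box, which forces $i\ge j$.
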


\begin{proof}
In light of Proposition \ref{prop:ci-split}, it suffices to prove items 1 and 2.  
Remark \ref{rmk:oneEssBox} allows one to easily verify the ``$\impliedby$'' directions of items 1 and 2.
For the other direction of 1, note that if the single element of $\mathcal{E}ss(w)$ is of type 1, then this element must lie strictly above the main diagonal. Suppose it lies in position $(i,j)$ with $i<j$. Then, $J_{\rm sym}(w) = J_{A\ind B\mid C}$ where $A = [1,i]$, $B = [j,n]$, and $C = \emptyset$.
For the ``$\implies$'' direction of 2, note that if the single element of $\mathcal{E}ss(w)$ is of type $2$, then this element $(i,j)$ must lie on or below the main diagonal so that $i\geq j$. Furthermore, $i<n$ and $j>1$ (else $w$ would not be a permutation). One then checks that, 
\[
w(k) = \left\{
\begin{array}{ll}
k, & 1\leq k\leq i-j+1\\
k+n-i, & i-j+2\leq k\leq i\\
k+1-j, & i+1 \leq k\leq n
\end{array}
\right .
\]
Comparing this to the second type of permutation appearing in Remark \ref{rmk:oneEssBox}, we see that in this case $J_{\rm sym}(w) = J_{A\ind B\mid C}$ where $A = [1,j-1]$, $B = [i+1,n]$, $C = [j,i]$.
\end{proof}

\begin{prop}\label{prop:definedbyinclusions}
Let $w\in S_n$, and suppose that $J_{\rm sym}(w)$ is a symmetric Schubert determinantal ideal. Then $J_{\rm sym}(w)$ is a sum of Schubert CI ideals if and only if $w$ is a permutation defined by inclusions.
\end{prop}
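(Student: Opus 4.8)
\noindent The plan is to reduce both implications to the numerical statement that $R(w)_{ij}$ takes its smallest possible value at every essential box of $w$. Write $\delta(i,j):=\max(0,\,i-j+1)$, the number of diagonal cells of $\Sigma_{[1,i],[j,n]}$. Two elementary observations. First, since rows $1,\dots,i$ of $P(w)$ carry $i$ entries equal to $1$ while only $j-1$ columns lie strictly left of column $j$, one always has $R(w)_{ij}\ge i-j+1$, hence $R(w)_{ij}\ge\delta(i,j)$. Second, because $R(w)_{ij}$ counts the $1$'s of $P(w)$ weakly north-east of $(i,j)$, condition $2(a)$ of Theorem~\ref{thm:inclusions} at $(i,j)$ reads $R(w)_{ij}=0$, which then forces $i<j$; and a symmetric count of the $1$'s strictly south-west of $(i,j)$ shows condition $2(b)$ reads $R(w)_{ij}=i-j+1$, which then forces $i\ge j-1$. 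In either case $R(w)_{ij}=\delta(i,j)$, and conversely $R(w)_{ij}=\delta(i,j)$ puts $(i,j)$ into case $2(a)$ or $2(b)$. Thus Theorem~\ref{thm:inclusions} is equivalent to
\begin{equation}\label{eq:dbi}
w\ \text{is defined by inclusions}\ \Longleftrightarrow\ R(w)_{ij}=\delta(i,j)\ \text{for every}\ (i,j)\in\mathcal{E}ss(w).
\end{equation}

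\noindent For the direction ``$\Leftarrow$'', assume $w$ is defined by inclusions. By Proposition~\ref{prop:grobnerInvertible} the essential minors generate $J_{\rm sym}(w)$, so $J_{\rm sym}(w)=\sum_{(i,j)\in\mathcal{E}ss(w)}\langle(R(w)_{ij}+1)\text{-minors of }\Sigma_{[1,i],[j,n]}\rangle$, and it suffices to check each summand is a Schubert CI ideal. By \eqref{eq:dbi}, $R(w)_{ij}=\delta(i,j)$. If $i<j$ the summand is $\langle\sigma_{k\ell}:k\le i,\ \ell\ge j\rangle=J_{[1,i]\ind[j,n]}$; if $i\ge j$ it is $\langle(i-j+2)\text{-minors of }\Sigma_{[1,i],[j,n]}\rangle=J_{[1,j-1]\ind[i+1,n]\mid[j,i]}$. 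Both are Schubert CI ideals by Proposition~\ref{prop:ci-split}, the side conditions $j>1$ and $i<n$ there being automatic for an essential box carrying a non-vacuous minor (and the empty essential set gives the empty sum $\langle 0\rangle$).

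\noindent For ``$\Rightarrow$'', suppose $J_{\rm sym}(w)=\sum_{l}J_{A_l\ind B_l\mid C_l}$. By Lemma~\ref{lem:typeCI} each summand is $J_{\rm sym}(w_l)$ for a permutation $w_l$ with a single essential box $(p_l,q_l)$ and $R(w_l)_{p_lq_l}=\delta(p_l,q_l)$, with $V_{\rm sym}(w_l)=\{\Sigma:\rank\Sigma_{[1,p_l],[q_l,n]}\le\delta(p_l,q_l)\}$ as a set; hence $V_{\rm sym}(w)=\bigcap_l V_{\rm sym}(w_l)$. Call $(p,q)$ \emph{admissible} if $1<q$ and $p<n$, so that $L_{p,q}:=\{\Sigma:\rank\Sigma_{[1,p],[q,n]}\le\delta(p,q)\}$ equals $V_{\rm sym}(u)$ for some $u$ as in Lemma~\ref{lem:typeCI}, and let $M$ be the set of admissible $(p,q)$ with $V_{\rm sym}(w)\subseteq L_{p,q}$. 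Since a generic point of $V_{\rm sym}(w)$ has $\rank\Sigma_{[1,p],[q,n]}=R(w)_{pq}$ (a standard feature of these varieties, cf.\ \cite{Fulton}), the inclusion $V_{\rm sym}(w)\subseteq L_{p,q}$ forces $R(w)_{pq}\le\delta(p,q)$, hence $R(w)_{pq}=\delta(p,q)$ by the first paragraph, for all $(p,q)\in M$. As $(p_l,q_l)\in M$ for every $l$, we get $\bigcap_{(p,q)\in M}L_{p,q}\subseteq\bigcap_l V_{\rm sym}(w_l)=V_{\rm sym}(w)$, while trivially $V_{\rm sym}(w)\subseteq\bigcap_{(p,q)\in M}L_{p,q}$; so $V_{\rm sym}(w)$ is the common vanishing locus of the north-east rank conditions $\{\rank\Sigma_{[1,p],[q,n]}\le R(w)_{pq}:(p,q)\in M\}$.

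\noindent It remains to conclude $\mathcal{E}ss(w)\subseteq M$, for then \eqref{eq:dbi} finishes the proof; this is where I expect the one genuine obstacle. One needs the irredundancy of Fulton's essential set in the symmetric setting: for $(i_0,j_0)\in\mathcal{E}ss(w)$, the condition $\rank\Sigma_{[1,i_0],[j_0,n]}\le R(w)_{i_0j_0}$ is \emph{not} implied by the conditions $\{\rank\Sigma_{[1,p],[q,n]}\le R(w)_{pq}:(p,q)\ne(i_0,j_0)\}$. Granting this, any collection of north-east rank conditions whose common vanishing locus is $V_{\rm sym}(w)$ must include a condition at every box of $\mathcal{E}ss(w)$ (otherwise the remaining conditions, a fortiori all conditions away from the missing box, would imply the missing one); applied to the collection indexed by $M$, this gives $\mathcal{E}ss(w)\subseteq M$, i.e.\ $R(w)_{ij}=\delta(i,j)$ on $\mathcal{E}ss(w)$, so $w$ is defined by inclusions. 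The needed irredundancy is Fulton's for generic matrices \cite{Fulton}; to transfer it to $\textrm{Sym}_n$ one can use the surjection $x_{ij},x_{ji}\mapsto\sigma_{ij}$ of the proof of Proposition~\ref{prop:grobnerInvertible} together with the fact that $V_{\rm sym}(w)$ contains positive definite matrices, so a rank drop forced on $V_{\rm full}(w)$ is forced on $V_{\rm sym}(w)$ as well. Everything else is bookkeeping with rank arrays and the dictionary of Remark~\ref{rmk:oneEssBox} and Proposition~\ref{prop:bruhat}.
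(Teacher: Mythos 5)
Your forward direction is sound and, modulo repackaging, is the paper's: the equivalence ``$w$ defined by inclusions $\iff R(w)_{ij}=\delta(i,j)$ for all $(i,j)\in\mathcal{E}ss(w)$'' is correct, and identifying each single-essential-box summand with a CI ideal of the two shapes in Proposition~\ref{prop:ci-split} is exactly the content of Remark~\ref{rmk:oneEssBoss} -- sorry, Remark~\ref{rmk:oneEssBox} and Lemma~\ref{lem:typeCI}. The genuine gap is in the converse, and it is precisely the step you flag: your argument stands or falls on the unproved lemma that the essential rank conditions are irredundant \emph{on symmetric matrices}, i.e.\ that for each $(i_0,j_0)\in\mathcal{E}ss(w)$ there exists a \emph{symmetric} $\Sigma$ with $\rank\Sigma_{[1,p],[q,n]}\le R(w)_{pq}$ for all $(p,q)\neq(i_0,j_0)$ but $\rank\Sigma_{[1,i_0],[j_0,n]}>R(w)_{i_0j_0}$. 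Your proposed transfer from Fulton does not prove this: the implication you invoke (``a rank drop forced on $V_{\rm full}(w)$ is forced on $V_{\rm sym}(w)$'', which is just $V_{\rm sym}(w)=V_{\rm full}(w)\cap \textrm{Sym}_n(\mathbb{K})$) goes the wrong way. It shows that redundancy over generic matrices implies redundancy over symmetric ones; what you need is the opposite descent, namely a symmetric witness to irredundancy, and Fulton's generic-matrix witness need not be symmetric or symmetrizable. This is not a formality: North-East rank conditions interact differently on $\textrm{Sym}_n(\mathbb{K})$ than on $\textrm{Mat}_n(\mathbb{K})$ (see Remark~\ref{rmk:no smS ideals} and Example~\ref{ex: symmetricDecompose}, where an ideal that would be prime generically decomposes symmetrically), so the symmetric irredundancy requires a real argument -- for instance, producing for each essential box a permutation $v\in S_n$ with $R(v)_{i_0j_0}>R(w)_{i_0j_0}$ and $R(v)_{pq}\le R(w)_{pq}$ elsewhere and taking a generic point of $V_{\rm sym}(v)$, or running the type~C rank-array machinery of Proposition~\ref{prop:intDecRank}. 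Relatedly, your assertion that a generic point of $V_{\rm sym}(w)$ has exact ranks $R(w)_{pq}$ is true but is not ``cf.\ Fulton''; it needs the Section~\ref{sect:typeB} identification with a type~C Kazhdan--Lusztig variety (opposite cells preserve North-East ranks).

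By contrast, the paper's converse avoids any irredundancy or generic-rank input: it works box by box, uses Lemma~\ref{lem:typeCI} to pin down $A,B,C$ (hence the rank bound $i_k-j_k+1$) for the single-box ideal at each essential box, and then a direct count of the $1$s North-West and South-East of that box shows no $1$ can lie strictly South-West of it, i.e.\ the box is of type $2$ (type $1$ boxes being immediate), so $w$ is defined by inclusions. Either adopt that style of argument, or supply an actual proof of the symmetric irredundancy statement; as written, the converse direction of your proposal is incomplete.
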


\begin{proof}
Suppose $w$ is defined by inclusions. Let $\mathcal{E}ss(w) = \{(i_1,j_1),\dots, (i_r,j_r)\}$.
For each $(i_k, j_k)$, let $w_k$ be the minimal length permutation with $(i_k, j_k)$ in its essential set. This exists and is unique: start with the matrix $P(w)$, and draw one horizontal line and one vertical line to partition $P(w)$ into four blocks so that the South-West corner of the North-East block is $(i_k, j_k)$. Then re-arrange the $1$s within each block so that all $1$s in $P(w)$ appear from North-East to South-West within block rows and within block columns,
without changing the set of columns North of the horizontal line,
or East of the vertical line, containing a~$1$. 
Observe that $(i_k,j_k)$ is the unique element in $\mathcal{E}ss(w_k)$ and that $(i_k,j_k)$ is of type $1$ (respectively type $2$) for $w_k$ whenever $(i_k,j_k)$ is of type $1$ (resp. type $2$) for the original permutation $w$. Furthermore,
\[
J_{\rm sym}(w) = J_{\rm sym}(w_1)+\cdots +J_{\rm sym}(w_r).
\]
By Lemma \ref{lem:typeCI}, each of the above $J_{\rm sym}(w_k)$ is a Schubert CI ideal.

Next suppose that $J_{\rm sym}(w)$ is a sum of Schubert CI ideals. Let $\mathcal{E}ss(w) = \{(i_1,j_1),\dots, (i_r,j_r)\}$, and let $w_k$ be the unique minimal length permutation with the unique essential box $(i_k,j_k)$. Then, 
\[
J_{\rm sym}(w) = J_{\rm sym}(w_1)+\cdots +J_{\rm sym}(w_r).
\]
By assumption, each $J_{\rm sym}(w_k)$ is a Schubert CI ideal, and so $J_{\rm sym}(w_k) = J_{A\ind B\mid C}$ for some disjoint subsets $A,B,C\subseteq [n]$. If $(i_k, j_k)$ is not a type $1$ essential element of $w$, then $C\neq \emptyset$. 
So, by the proof of Lemma \ref{lem:typeCI}, we know that $A = [1,j_k-1]$, $B = [i_k+1,n]$, and $C = [j_k, i_k]$, $i_k\geq j_k$. Thus, the rank of the North-East submatrix of $w_k$ consisting of rows $[1,i_k]$ and columns $[j_k,n]$ is equal to $\#C = i_k-j_k+1$. 
The rank of the corresponding North-East submatrix of $w$ is bounded above by this number. 
Since $w$ is a permutation, the number of $1$s in the North-West submatrix $w_{[1,i_k],[1,j_k-1]}$ is at least $j_k-1$ 
and the number of $1$s in the South-East submatrix $w_{[i_k+1,n],[j_k,n]}$ is at least $n-i_k$. 
This leaves no $1$s left to place strictly South-West of location $(i_k, j_k)$
(and indeed we have placed too many $1$s already unless the North-East ranks
for $w$ and $w_k$ agree). 
Consequently, $(i_k, j_k)$ is a type $2$ essential box of $w$.
\end{proof}

\begin{rmk}
We thank the anonymous referee for pointing out the connection between Schubert varieties defined by inclusions and conditional independence structure. Proposition \ref{prop:definedbyinclusions} appears here because of the referee's comments.
\end{rmk}

\begin{ex}
In Example \ref{ex:weird}, we can see that $J_{\rm sym}(351624)$ is not a sum of symmetric Schubert determinantal CI ideals since $351624$ is not a permutation defined by inclusions. Indeed, $\mathbf{3}5\mathbf{1624}$ does not avoid $31524$ (and this pattern appears in bold in the first permutation).
\end{ex}

We now turn to the problem of finding the vanishing ideals of 
Gaussian graphical models.  Specifically, let
$G = ([m], B,D)$ be a mixed graph with $B$ denoting
a set of bidirected edges $i \bi j$ and $D$ denoting a
set of directed edges $i \to j$. 
We restrict to the case where the set of directed edges forms
a directed acyclic graph, and we reorder the vertices in such a way that
if there is an edge $i \to j \in D$ then $i < j$. 

For convenience, recall from the introduction that for each edge $i \to j \in D$, one introduces a parameter $\lambda_{ij} \in \mathbb{R}$.
Let $\Lambda$ be the $m \times m$ matrix such that
$$
\Lambda_{ij}  =  \left\{  \begin{array}{cl}
\lambda_{ij}  &  \mbox{ if }  i \to j \in D  \\
0  & \mbox{ otherwise}.
\end{array}  \right.
$$ 
Let $\mathbb{R}^{D}$ denote the set of all such matrices $\Lambda$.
Let $PD_m$ denote the set of $m \times m$ symmetric positive definite
matrices.  Let 
$$
PD(B) :=  \{  \Omega \in PD_m :   \Omega_{ij} = 0 \mbox{ if } i \neq j \mbox{ and }
i \bi j  \notin B  \}.
$$
The Gaussian random vector associated to the mixed graph $G$ with  $\Lambda \in \rr^D$
and $\Omega \in PD(B)$ has $X \sim \mathcal{N}(\mu, \Sigma)$ where
$$
\Sigma  =  (I - \Lambda)^{-T} \Omega (I - \Lambda)^{-1}.
$$
and the Gaussian graphical model associated to $G$ is the set of
such positive definite covariance matrices that can arise:
$$
\mathcal{M}_{G}  =  \{  \Sigma = (I - \Lambda)^{-T} \Omega (I - \Lambda)^{-1}:
\Lambda \in \mathbb{R}^{D},  \Omega \in PD(B) \}. 
$$
We are interested in the vanishing ideals of these models, 
$$
J_{G}  =  \langle  f \in \mathbb{R}[\Sigma]:  f(\Sigma) = 0 \mbox{ for all } \Sigma \in 
\mathcal{M}_{G}  \rangle.
$$
While for general mixed graphs $G$, we know no explicit list of generators
of $J_{G}$, there is a natural list of determinantal constraints
associated to the graph, induced by the $t$-separation criterion in $G$, 
as follows.  

\begin{defn}
Let $G = (V,B,D)$ be a mixed graph.  A \textbf{trek} from $i$ to $j$ in $G$
consists of either
\begin{enumerate}
\item a directed path $P_L$ ending in $i$, and a directed path $P_R$ ending in $j$ which
have the same source, or
\item  a directed path $P_L$ ending in $i$, and a directed path $P_R$ ending in $j$
such that the source of $P_L$ and $P_R$ are connected by a bidirected edge.
\end{enumerate}
Let $\mathcal{T}(i,j)$ denote the set of treks in $G$ connecting $i$ and $j$.
\end{defn}

In a trek from $i$ to $j$ we say that the set of vertices in
$P_L$ is the left side and the set of vertices in $P_R$ is the right side
of the trek. Note that in the case that $P_L$ and $P_R$ share a vertex that
vertex is on both sides.

\begin{defn}
Let $G = ([m], B,D)$ be a mixed graph, and let $A_1, A_2, C_1, C_2 \subseteq [m]$
be four not-necessarily disjoint subsets of $[m]$.
The pair $(C_1,C_2)$ is said to \textbf{trek-separate} (or \textbf{t-separate})
$A_1$ and $A_2$ if for all $a_1 \in A$ and $a_2 \in A_2$, every trek in 
$\mathcal{T}(a_1,a_2)$ intersects $C_1$ on the left side or $C_2$ on the right side.
\end{defn}

\begin{thm} \cite{Sullivant2010} \label{thm:tsep}
Let $G = ([m], B,D)$ be a mixed graph and let $A_1, A_2$ be two not necessarily
disjoint subsets of $[m]$.  The matrix $\Sigma_{A_1, A_2}$ has rank at
most $r$ for all $\Sigma \in \mathcal{M}_G$ if and only if there are subsets
$C_1, C_2 \subseteq [m]$ with $\#C_1 + \#C_2 \leq r$  such that $(C_1, C_2)$ 
$t$-separate $A_1$ and $A_2$.
\end{thm}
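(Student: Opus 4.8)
The plan is to prove the two implications separately. The backward implication follows from a direct computation with the trek expansion of the entries of $\Sigma$; the forward one combines a Lindström--Gessel--Viennot-type expansion of the minors of $\Sigma$ with a max-flow/min-cut argument. (This is, in outline, the proof in \cite{Sullivant2010}.)

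For the ``if'' direction I would argue from the parametrization. Writing $P=(I-\Lambda)^{-1}=\sum_{k\ge0}\Lambda^{k}$ we have $\Sigma=P^{T}\Omega P$, and expanding each entry gives
\[
\Sigma_{a_1,a_2}=\sum_{u,v}P_{u,a_1}\,\Omega_{u,v}\,P_{v,a_2},
\]
a sum over treks from $a_1$ to $a_2$: the directed path realizing $P_{u,a_1}$ is the left leg, the one realizing $P_{v,a_2}$ is the right leg, and $\Omega_{u,v}$ is nonzero only when $u=v$ or $u\bi v$. Given a $t$-separating pair $(C_1,C_2)$ with $\#C_1+\#C_2\le r$, I would split $\Sigma_{A_1,A_2}=S'+S''$, where $S'$ is the sum of the trek terms whose left leg meets $C_1$ and $S''$ the rest; by $t$-separation every term in $S''$ has its right leg meeting $C_2$. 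Cutting each left leg in $S'$ at the first vertex $c\in C_1$ it meets, the portion from $c$ to $a_1$ contributes exactly $P_{c,a_1}$, so $S'=(P_{C_1,A_1})^{T}M'$ for a suitable $\#C_1\times\#A_2$ matrix $M'$, whence $\rank S'\le\#C_1$; cutting each right leg in $S''$ at its first vertex of $C_2$ similarly gives $S''=M''P_{C_2,A_2}$ with $\rank S''\le\#C_2$. Adding, $\rank\Sigma_{A_1,A_2}\le\#C_1+\#C_2\le r$; since this is a polynomial identity in the parameters it holds at every $\Sigma\in\mathcal{M}_G$.

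For the ``only if'' direction I would prove the contrapositive: assuming no $(C_1,C_2)$ with $\#C_1+\#C_2\le r$ $t$-separates $A_1$ and $A_2$, produce some $\Sigma\in\mathcal{M}_G$ with $\rank\Sigma_{A_1,A_2}>r$. The first ingredient is a Lindström--Gessel--Viennot-type \emph{trek rule}: for $(r+1)$-subsets $A_1'\subseteq A_1$, $A_2'\subseteq A_2$, the minor $\det\Sigma_{A_1',A_2'}$ expands as a signed sum over systems of $r+1$ treks matching $A_1'$ to $A_2'$ bijectively, and the systems having a \emph{sided intersection} (two distinct treks sharing a left-leg vertex, or two sharing a right-leg vertex) cancel in pairs under a tail-swapping involution; one then checks that the remaining sided-disjoint systems do not cancel among themselves, so that $\det\Sigma_{A_1',A_2'}$ is a nonzero polynomial exactly when a sided-disjoint system of $r+1$ treks from $A_1'$ to $A_2'$ exists. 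The second ingredient is a flow argument: build a unit-capacity network $N$ with a source $s$ joined to a left copy of each vertex of $A_1$, a sink $t$ joined from a right copy of each vertex of $A_2$, two internal unit-capacity nodes $v^{L},v^{R}$ for each $v\in[m]$ (saturating $v^L$ corresponds to $v\in C_1$, saturating $v^R$ to $v\in C_2$), the edges of $D$ reversed between the left copies and kept forward between the right copies, and ``top'' edges $v^{L}\to w^{R}$ whenever $w=v$ or $v\bi w$. Then $s$--$t$ paths in $N$ are exactly treks from $A_1$ to $A_2$ (left leg traversed backwards), $s$--$t$ cuts are exactly $t$-separating pairs $(C_1,C_2)$ of weight $\#C_1+\#C_2$, and integral $s$--$t$ flows are exactly sided-disjoint trek systems. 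Since by hypothesis every cut has weight $\ge r+1$, Menger's theorem gives a flow of value $r+1$, i.e.\ a sided-disjoint system of $r+1$ treks; taking the $r+1$ heads on each side produces subsets $A_1',A_2'$ for which the trek rule makes $\det\Sigma_{A_1',A_2'}$ a nonzero polynomial, hence nonzero at a generic (in particular at some) $\Sigma\in\mathcal{M}_G$, so $\rank\Sigma_{A_1,A_2}\ge r+1$ there.

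The hard part will be the trek rule. Unlike ordinary LGV for directed paths, a single trek may pass through its top vertex on both legs, and the surviving sided-disjoint systems need not all carry the same sign, so establishing that the tail-swapping involution is well defined and sign-reversing and that there is no residual cancellation among the survivors (for instance by exhibiting one system whose parameter monomial cannot be produced by any other) requires care. A secondary point is checking that the two-copies-per-vertex network correctly encodes ``$C_1$ blocks on the left, $C_2$ on the right, with $C_1$ and $C_2$ possibly overlapping'', so that its min-cut really equals $\min_{(C_1,C_2)\ t\text{-sep}}(\#C_1+\#C_2)$.
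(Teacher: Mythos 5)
First, a point of comparison: the paper does not prove Theorem \ref{thm:tsep} at all; it is quoted from \cite{Sullivant2010}, so there is no in-paper argument to measure you against, and your outline is essentially a sketch of the strategy of that reference (trek expansion and a factorization for the upper bound, a flow/cut translation plus a determinantal ``trek rule'' for the lower bound). Your ``if'' direction is essentially complete and correct: splitting the trek expansion of $\Sigma_{A_1,A_2}$ according to whether the left leg meets $C_1$, and cutting each leg at the first $C_1$- (resp.\ $C_2$-) vertex encountered from the top, does give factorizations through spaces of dimensions $\#C_1$ and $\#C_2$ (one only has to be careful that the \emph{unconstrained} factor is the segment below the cut vertex, so that it contributes a genuine entry of $(I-\Lambda)^{-1}$ while the constrained part goes into $M'$, $M''$), and the bound then holds for every parameter value, hence on all of $\mathcal{M}_G$. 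The network you describe, with two unit-capacity copies of each vertex, reversed directed edges on the left copies and ``bridge'' edges $v^L\to w^R$ for $w=v$ or $v$ bidirected to $w$, does correctly encode $t$-separating pairs as vertex cuts and sided-disjoint trek systems as integral flows, so the Menger step is fine.

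The genuine gap is the one you yourself flag and then defer: the ``trek rule,'' i.e.\ the statement that existence of a sided-disjoint system of $r+1$ treks from $A_1'$ to $A_2'$ forces $\det\Sigma_{A_1',A_2'}$ to be a nonzero polynomial. This is the mathematical heart of the ``only if'' direction, and it is not a routine Lindstr\"om--Gessel--Viennot adaptation: a tail-swapping involution on sided-intersecting systems is delicate because a single trek can revisit vertices between its own two legs and because a swap at a shared left-leg vertex can destroy the trek structure or fail to be an involution; and even granting the cancellation of sided-intersecting systems, the surviving sided-disjoint systems carry different signs, so non-cancellation among them must be argued separately (e.g.\ by exhibiting a monomial in the $\lambda$'s and $\omega$'s achieved by exactly one surviving system, or by the route actually taken in \cite{Sullivant2010}: Cauchy--Binet applied to $\Sigma=(I-\Lambda)^{-T}\Omega(I-\Lambda)^{-1}$, the classical LGV lemma for the two directed path matrices, a matching argument for $\det\Omega_{S,T}$, and a specialization of parameters to isolate one summand). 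As written, your proposal is a correct plan whose lower-bound half rests entirely on this unproven lemma, so it is not yet a proof. A minor additional point: in concluding ``nonzero at some $\Sigma\in\mathcal{M}_G$'' you should note that the parameter domain $\mathbb{R}^D\times PD(B)$ is Zariski dense in its ambient linear space, so a nonvanishing polynomial in $(\Lambda,\Omega)$ indeed fails to vanish at some admissible parameter choice.
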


Theorem \ref{thm:tsep} gives a precise characterization of which minors
of the symmetric matrix $\Sigma$ belong to the vanishing ideal $J_G$.
Note that these determinantal constraints may or may not correspond to
a conditional independence constraint.
It is an interesting open problem to characterize when those determinantal
constraints actually generate $J_G$.  Here we identify a class of
mixed graphs for which $J_G$ is the vanishing ideal of a 
symmetric matrix Schubert variety, which will allow us
to deduce that $J_G$ is generated by determinantal constraints in those
cases.

To do so, we compare the parametrization for graphical models to the parametrization
for the symmetric matrix Schubert varieties.  Note
that for a given permutation $w$ we can write the parametrization
of the symmetric variety $V(w)_{\rm sym}$ via the map:
$$
\phi(a_{1},\ldots, a_{n}, t_{1}, \ldots, t_{k})  =  
X_{i_{1}}(t_{1})^{T} \cdots X_{i_{k}}(t_{k})^{T}{\rm diag}(a_{1}, \ldots, a_{n})  X_{i_{k}}(t_{k})  \cdots  X_{i_{1}}(t_{1}).
$$
On the other hand, we have the parametrization for the Gaussian graphical
model
$$
\Sigma = (I- \Lambda)^{-T} \Omega (I- \Lambda)^{-1}
$$
where $\Lambda \in \mathbb{R}^D$ and $\Omega \in PD(B)$.
To construct examples of Gaussian graphical model that are
symmetric matrix Schubert varieties, we should find
mixed graphs $G$ for which there exists a $w$ such that 
two corresponding parametrizations yield the same matrices.
Because of the structure of the parameterization of 
the symmetric matrix Schubert varieties, we can break this into
two pieces:  
\begin{itemize}
\item  which linear spaces in the space of symmetric matrices
are symmetric matrix Schubert varieties?  and
\item which families of matrices
$\{ (I - \Lambda)^{-1} : \Lambda \in \mathbb{R}^d \}$ can be realized by a parametrization as a product of
Chevalley generators?
\end{itemize}
Taking inverses and noting that the inverse of a Chevalley
generator is a Chevalley generator, we can ask the same thing about the
parametrization of upper triangular
matrices $I - \Lambda$.  This leads us to:

\begin{defn}\label{def:markovChain}
A mixed graph $G = ([n],B,D)$ is called a \textbf{generalized Markov chain}
if 
\begin{enumerate}
\item  $i \to j \in D$ then  $i < j$,
\item  $i \to j \in D$ then for all $i \leq k < l \leq j$ $k \to l \in D$, and
\item  $i \bi j \in B$  and $i < j$ then for all $i \leq k < l \leq j$, $k \bi l \in B$. 
\end{enumerate}
\end{defn}

\begin{thm}\label{thm:generalized}
Let $G$ be a generalized Markov chain.  Then $J_G$ is a symmetric
Schubert determinantal ideal, in particular, $J_G$ is generated by the
determinantal constraints arising from the $t$-separations implied
by the graph $G$.
\end{thm}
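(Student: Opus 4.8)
The plan is to identify $\overline{\mathcal M_G}$, the Zariski closure of the model, with a symmetric matrix Schubert variety $V_{\rm sym}(w_G)$ for a permutation $w_G\in S_n$ read off from $G$, by matching the parametrization $\Sigma=(I-\Lambda)^{-T}\Omega(I-\Lambda)^{-1}$ against the Chevalley parametrization of $V_{\rm sym}(w_G)$ supplied by Propositions \ref{prop:param up} and \ref{prop:param sym}. Once that is done, Proposition \ref{prop:grobnerInvertible} tells us $J_{\rm sym}(w_G)$ is prime, hence $J_G=I(\overline{\mathcal M_G})=J_{\rm sym}(w_G)$; and since $J_{\rm sym}(w_G)$ is generated by its essential minors, each of which is a minor of a North-East submatrix $\Sigma_{[1,i],[j,n]}$, Theorem \ref{thm:tsep} identifies these generators with the determinantal constraints coming from $t$-separations in $G$, giving the ``in particular'' clause. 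Throughout, write $E_D=\{(i,j):i\to j\in D\}$ and $E_B=\{(i,j):i<j,\ i\bi j\in B\}$, both of which are interval-closed by hypotheses (2) and (3), and work over $\mathbb C$ (which computes the same ideal, as $\mathbb R^D\times PD(B)$ is Zariski dense in the relevant complex parameter space).

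First I would set up $w_G$. By Theorem \ref{thm:tsep} the generic rank of $\Sigma_{[1,i],[j,n]}$ on $\mathcal M_G$ equals the size of a minimal $t$-separator of $[1,i]$ from $[j,n]$, and the interval-closure hypotheses are exactly what ensures that the resulting North-East rank array is the array $R(w_G)$ of an \emph{honest} permutation $w_G\in S_n$, rather than of a partial permutation matrix (contrast Remark \ref{rmk:no smS ideals}). With $w_G$ chosen this way, the containment $\mathcal M_G\subseteq V_{\rm sym}(w_G)$, i.e.\ $J_{\rm sym}(w_G)\subseteq J_G$, is immediate from Theorem \ref{thm:tsep}: each essential minor of $J_{\rm sym}(w_G)$ vanishes on $\mathcal M_G$ because the witnessing $t$-separator exists.

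For the reverse inclusion I would compare parametrizations. Since $E_B$ is interval-closed, the graph it defines on $[n]$ is chordal with perfect elimination order $1,2,\dots,n$, so $\Omega$ has a Cholesky factorization $\Omega=(R')^{T}\operatorname{diag}(d)\,R'$ with $R'$ upper unitriangular supported on $E_B\cup\{\text{diagonal}\}$; conversely every such $R'$ (with any positive $d$) comes from some $\Omega\in PD(B)$, interval-closedness of $E_B$ being also what forces $(R')^{T}\operatorname{diag}(d)R'$ to stay supported on $E_B\cup\{\text{diagonal}\}$. Hence $\Sigma=V^{T}\operatorname{diag}(d)\,V$ with $V:=R'(I-\Lambda)^{-1}$ upper unitriangular, and, writing $\psi(U)=U^{T}U$ as in Proposition \ref{prop:param sym}, $\overline{\mathcal M_G}=\overline{\psi(\overline{\mathcal V})}$ where $\mathcal V=\{\operatorname{diag}(c)\,R'(I-\Lambda)^{-1}\}$ ranges over $c\in\mathbb C^{n}$, $R'$ as above, and $\Lambda\in\mathbb C^{D}$ (absorbing $\operatorname{diag}(d)$ into a free diagonal is harmless). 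By Proposition \ref{prop:param sym} it then suffices to prove $\overline{\mathcal V}=V_{\rm up}(w_G)$. The interval-closure hypotheses are designed so that $\overline{\mathcal V}$ is cut out by North-East rank conditions coming from the honest permutation $w_G$: the forbidden positions for $R'$ form a North-East-closed region (hence a union of North-East rectangles) by interval-closedness of $E_B$, while $(I-\Lambda)^{-1}$ is supported on the transitive closure of $E_D$, which is again interval-closed, with its excess entries determinantal functions of the basic ones. Granting $\overline{\mathcal V}=V_{\rm up}(w_G)$, feeding it back through Proposition \ref{prop:param sym} gives $\overline{\mathcal M_G}=V_{\rm sym}(w_G)$, and the theorem follows.

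The hard part will be establishing $\overline{\mathcal V}=V_{\rm up}(w_G)$ precisely: one must show that the variety built from the two matrix Schubert pieces attached to $E_B$ and $E_D$ is an honest matrix Schubert variety (no partial-permutation pathology, cf.\ Remark \ref{rmk:no smS ideals}) and that its North-East rank array is the one predicted by $t$-separation. I would organize this by decomposing each of $E_B$ and $E_D$ into its maximal triangles $\{(k,l):a\le k<l\le b\}$, which under the interval-closure hypotheses form a staircase, choosing a reduced word for the matrix Schubert variety of each block, and tracking how the blocks compose under matrix multiplication; the delicate point is that an overlapping $B$-block and $D$-block contribute words that do not simply concatenate but partially cancel — exactly the non-identifiability that makes $\ell(w_0w_G)$ strictly smaller than $|E_B|+|E_D|$ in general (already visible for $n=2$ with $1\bi2\in B$ and $1\to2\in D$, where $w_G$ is the identity). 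Verifying that this cancellation is governed by the Demazure product, and that the resulting $w_G$ is the permutation whose rank array matches the $t$-separation bounds, is the technical heart of the argument.
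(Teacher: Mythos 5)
Your overall architecture mirrors the paper's: both arguments compare the model parametrization $(I-\Lambda)^{-T}\Omega(I-\Lambda)^{-1}$ with the Chevalley parametrization of a symmetric matrix Schubert variety, factor the $\Omega$-part through an upper triangular family, reduce via $\psi(U)=U^TU$ (Proposition \ref{prop:param sym}) to an identification with some $V_{\rm up}(w_G)$, and then get the ``in particular'' clause from primality of $J_{\rm sym}(w_G)$ together with Theorem \ref{thm:tsep}. The preliminary steps you supply (the envelope-preserving Cholesky factorization of $\Omega\in PD(B)$ under condition (3), passage to $\mathbb{C}$, the one containment $J_{\rm sym}(w_G)\subseteq J_G$ from $t$-separation) are fine. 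The problem is that the statement carrying all the weight --- that $\overline{\mathcal V}=V_{\rm up}(w_G)$, and even that the generic North-East rank array of the model is the rank array $R(w_G)$ of an honest permutation rather than a partial-permutation array --- is not proved: you explicitly label it ``the technical heart'' and offer only a program (maximal triangles, reduced words, Demazure products, cancellation bookkeeping). As written, nothing in the proposal rules out the pathologies of Remark \ref{rmk:no smS ideals} or establishes the dimension/closure identification (note the parametrization is not generically finite-to-one when $B$ and $D$ overlap, as your own $n=2$ example shows), so this is a genuine gap rather than a routine verification.

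It is worth contrasting how the paper disposes of this step. Rather than defining $w_G$ through generic $t$-separation ranks and then verifying a Demazure-product computation, the paper runs the logic in the other direction: it asks which sets $PD(B)$ are (positive definite parts of) symmetric matrix Schubert varieties and which families $\{I-\Lambda:\Lambda\in\mathbb{R}^D\}$ are unipotent upper triangular matrix Schubert varieties. In both cases the answer is ``exactly the coordinate subspaces with North-East-closed zero patterns,'' i.e.\ partition-shaped zero patterns indexed by $132$-avoiding permutations, and these conditions on the zero patterns are precisely conditions (2) and (3) of Definition \ref{def:markovChain}; the permutation $w_G$ is then read off directly from the zero patterns, and the Chevalley parametrization of $V_{\rm sym}(w_G)$ specializes to the model parametrization without any separate rank-array matching. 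If you want to complete your version, you either need to carry out the Demazure-product argument you sketch (showing the closure of the image of the concatenated, possibly non-reduced, word is $V_{\rm up}(w_G)$ with the predicted rank array), or shortcut it as the paper does by exploiting that all the matrix Schubert varieties arising here are cut out by coordinate vanishing on a North-East partition shape.
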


\begin{proof}
We proceed to answer the two questions preceding the definition 
of a generalized Markov chain.
First, the linear spaces in the space of symmetric matrices
that are symmetric matrix Schubert varieties must have all rank
conditions determined by setting zero entries in the symmetric matrix.
These patterns of zeros will give rise to a symmetric matrix Schubert variety.
Clearly, we must have that if there is a zero in the $(i,j)$ position, then 
there must also be zeroes everywhere up and to the right from this position.  
This forces that the bidirected part $G$ to satisfy condition (3) in the
definition of a generalized Markov chain.

Second, we need to determine for which directed acyclic graphs $G = ([n], D)$ 
does the parametrization 
$$\xi:  \mathbb{K}^{D}  \rightarrow  \textrm{Up}_n({\mathbb{K}}), ~~ \Lambda \mapsto
I - \Lambda$$
parametrize an upper triangular matrix Schubert variety (technically, because of the
ones on the diagonal, we might call this a unipotent matrix Schubert variety).
To this end, we must ask, when could a pattern of zeros in $I - \Lambda$
arise as the associated set of rank conditions induced by a permutation $w$.  
Clearly, we must have that if there is a zero in the $(i,j)$ position, then 
there must also be zeroes everywhere up and to the right from this position.  Equivalently,
if $(i,j)$ is a nonzero position, then all positions $(k,l)$ where $i\leq k < l \leq j$
must also have nonzero entries.  The corresponding condition on the graph $G$
is that it is a generalized Markov chain. 
\end{proof}

\begin{rmk}
The symmetric and upper triangular matrix Schubert varieties that appear in the above proof are determined by the vanishing of a partition shape of coordinates in the North-East corner. 
Each such matrix Schubert variety is indexed by a $132$-avoiding permutation. In the case of ordinary matrix Schubert varieties, these are pullbacks of Ding's Schubert varieties \cite{ding}.
\end{rmk}

Note that Theorem \ref{thm:generalized} does not give a complete
characterization of the graphs such that $J_G$ is a symmetric Schubert determinantal ideal.  
Indeed, graphs with many edges will just give $J_G = \langle 0 \rangle$
although the graph is not a generalized Markov chain.  For example, 
if $G = ([m], B,D)$ has $B$ consisting of all edges, that $J_G$ is zero regardless
of what $D$ is.

\begin{ex}
Let $G$ be the graph on $[5]$ with only directed edges
$1 \to 2, 2 \to 3, 3 \to 4, 4 \to 5, 1 \to 3, 2 \to 4$.  
Then $G$ is a generalized Markov chain and 
$$
J_{G}  =  J_{\{1,2,3\} \ind 5 \mid  4}  + J_{1 \ind \{4,5\} \mid  \{2,3\}}.
$$ 
\end{ex}

\begin{ex}
Let $G$ be the graph on $[4]$ with directed edges
$1 \to 2, 1\to 3, 2 \to 3, 3 \to 4$, and the bidirected edge
$3 \bi 4$.  This graph is a generalized Markov chain and 
$$
J_G  =  \langle | \Sigma_{12, 34}  |  \rangle.
$$
Note, in particular, that $J_G$ is not a sum of conditional independence ideals.
\end{ex}


\section{On the combinatorics of the three stratifications}\label{sect:combDescription}

We end with some combinatorics of the three stratifications (determined by three Bruhat intervals) from Proposition \ref{prop:stratification}.
\begin{itemize}
\item In subsection~\ref{ssect:explicit}, we make explicit how to decompose sums of ideals from Proposition \ref{prop:stratification} using rank conditions on matrices. 
\item In subsection~\ref{ssect:enumeration}, we provide formulas for the number of strata in our stratifications of $\textrm{Mat}_n(\mathbb{K})$, $\textrm{Up}_n(\mathbb{K})$, and $\textrm{Sym}_n(\mathbb{K})$. In the case of $\textrm{Up}_n(\mathbb{K})$,  the numbers of strata are given by the \textbf{median Genocchi numbers}.
\end{itemize}

\subsection{A lowbrow translation}\label{ssect:explicit}

\newword{Rank arrays} have been used by many authors to study Schubert varieties and related varieties (see eg. \cite{Fulton}, \cite{eriksson}). We make use of them in this section. 
To set our conventions, define a rank array $R$ to be a $2n\times2n$ array of naturals
such that $R_{i,1} = i$ for $1\leq i\leq 2n$ and $R_{2n,j} = 2n+1-j$
for $1\leq j\leq 2n$.
We also make the conventions $R_{0,j} = R_{1,2n+1} = 0$.
A rank array $R$ is \newword{of type~C} if
\[R_{i,j}-R_{2n-i,2n+2-j} = i-j+1\] 
for all $1\leq i,j\leq 2n$.
To each rank array $R$ is associated the \newword{rank ideal}
\[I_{\rm full}(R):=\langle \text{minors of size } (1+R_{ij}) \textrm{ in } \widetilde{X}_{[1,i],[j,2n]} \mid i,j\in [2n]\rangle\]
and, if $R$ is of type~C, also the ideal
\[I_{\rm sym}(R):=\langle \text{minors of size } (1+R_{ij}) \textrm{ in } \widetilde{\Sigma}_{[1,i],[j,2n]} \mid i,j\in [2n]\rangle.\]
We adopt the natural conventions that an ideal of minors larger than the matrix 
containing them is the zero ideal, while an ideal of minors of size 0 or smaller
is the unit ideal.  
The definitions of rank ideals are of course concocted so that a permutation $w\in S_{2n}$
has an associated rank array $R(w)$ given by
\[R(w)_{ij} = \rank P(v)_{[1,i], [j,2n]} = \sum_{k = 1 }^{i} \sum_{\ell = j}^{n} P(w)_{k\ell}\ ,\]
the number of $1$s in $P(w)$ above and to the right of position $i,j$,
which makes $I_{\rm full}(w) = I_{\rm full}(R(w))$ and $I_{\rm sym}(w) = I_{\rm sym}(R(w))$
for all $w$ for which the left sides of these equations are appropriate.

We have not introduced separate rank array technology for the ideals $I_{\rm up}(w)$
as they are a subclass of the ideals $I_{\rm full}(w)$,
so the rank arrays for the latter can also be used to work with the former.

\begin{lem}\label{lem:square}
A permutation $w\in S_{2n}$ lies in $[1,w_{\square}]$ if and only if
$-n\leq (w_i-i)\leq n$ for all $1\leq i\leq 2n$.
\end{lem}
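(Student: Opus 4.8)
The plan is to characterize membership in $[1,w_{\square}]$ via comparison of permutation matrices under Bruhat order, expressed through the rank function. Recall that for $u,v\in S_{2n}$, one has $u\leq v$ in Bruhat order if and only if the number of $1$s of $P(u)$ weakly North-East of each position $(i,j)$ is at least the corresponding number for $P(v)$; in our conventions (where $P(v)$ encodes $w_0v$), this translates into the condition that $\rank P(w)_{[1,i],[j,2n]}\leq \rank P(w_{\square})_{[1,i],[j,2n]}$ for all $i,j$. So the first step is to write down explicitly the rank array $R(w_{\square})$ of the square permutation, using $P(w_{\square}) = \begin{bmatrix} J_n & 0_n \\ 0_n & J_n\end{bmatrix}$ from~\eqref{eqn:permutations}; a direct count gives a closed-form piecewise expression for $R(w_{\square})_{ij}$ in terms of $i$ and $j$ relative to $n$.

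Next I would compute, for an arbitrary $w\in S_{2n}$, a convenient reformulation of all the inequalities $R(w)_{ij}\leq R(w_{\square})_{ij}$. The key observation is that $w\leq w_{\square}$ need not be checked at every box: since $w_{\square}$ is a dominant permutation (its diagram is a Young diagram), its essential set is a single box, and Fulton's theory tells us the Bruhat condition $w\leq w_{\square}$ is equivalent to a rank inequality at that one essential box — or, after unwinding, to a family of inequalities indexed more simply. Concretely, the constraint should collapse to: for each $i$, the number of $1$s of $P(w)$ in the North-East region cut out at the relevant corner is controlled, and this in turn says exactly that $w(i)$ cannot be too far from $i$. I would then verify that the resulting bound is precisely $-n\le w_i - i\le n$ for all $i$, by translating the rank inequality at the essential box of $w_{\square}$ (located near position $(n,n+1)$) into a statement about individual values $w_i$.

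The cleanest route for the final translation is probably to argue directly rather than through the full essential-set machinery: $w\leq w_{\square}$ fails if and only if some value is displaced by more than $n$, i.e.\ there is an $i$ with $w_i - i > n$ or $w_i - i < -n$. If $w_i > i + n$, then among the first $i$ positions, at least one value exceeds $i+n$, forcing $\rank P(w)_{[1,i],[i+n+1,2n]}\ge 1$ while the corresponding rank for $w_{\square}$ is $0$ (since the North-East $n\times n$ corner of $P(w_{\square})$ restricted appropriately is zero); this violates $w\le w_{\square}$. The symmetric argument handles $w_i < i - n$ using a South-West region. Conversely, if $-n\le w_i - i\le n$ for all $i$, one checks that every rank inequality $R(w)_{ij}\le R(w_{\square})_{ij}$ holds, by a short case analysis on where $(i,j)$ sits relative to the band $|i-j|\le n$ — outside this band the right-hand side is saturated (equal to $i$ or to $2n+1-j$) so nothing is required, and inside it the displacement bound on each $w_k$ limits how many $1$s can land in the North-East rectangle.

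I expect the main obstacle to be bookkeeping in the converse direction: converting the global hypothesis ``$|w_i-i|\le n$ for all $i$'' into the rank inequality at a fixed box $(i,j)$ requires counting, for $k\le i$, how many have $w_k\ge j$, and comparing against $R(w_{\square})_{ij}$, with the answer depending on the relative positions of $i$, $j$, and $n$. The displacement bound says $w_k\ge j$ forces $k\ge j-n$, so the count is at most $i-\max(1,j-n)+1$, and one must check this never exceeds $R(w_{\square})_{ij}$; handling the boundary cases where $j\le n+1$ or $i\ge n$ without sign errors is the delicate part, but each individual case is a one-line inequality.
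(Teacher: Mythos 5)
Your overall strategy---translate $w\le w_\square$ into an entrywise comparison of rank arrays and then verify the comparison box by box using the displacement bounds---is viable, and it differs from the paper's route (the paper reduces the entrywise comparison to the $2n$ antidiagonal entries $R(w)_{i,n+1-i}$ and $R(w)_{n+i,2n+1-i}$, which must attain their maximal value $i$, and then argues that $R(w_\square)$ is the entrywise-minimal rank array with those entries saturated, so no case analysis over boxes is needed). However, your key translation step is stated backwards for the paper's conventions, and this propagates. Since $P(w)$ is the matrix of $w_0w$ and $R(w)_{ij}=\rank P(w)_{[1,i],[j,2n]}$, the criterion is: $w\le w_\square$ if and only if $R(w)\ge R(w_\square)$ entrywise, the opposite of the inequality you write. (Test $w=1$: here $R(1)_{ij}=\min(i,2n+1-j)$ is entrywise maximal, e.g.\ $R(1)_{1,n+1}=1>0=R(w_\square)_{1,n+1}$, so under your stated criterion the identity would fail to lie below $w_\square$.) Relatedly, your claim that $w_i>i+n$ forces $\rank P(w)_{[1,i],[i+n+1,2n]}\ge 1$ is true for the untwisted matrix of $w$ (with a $1$ at $(k,w_k)$) but false for the paper's $P(w)$: the relevant $1$ of $P(w)$ sits in column $2n+1-w_i\le n-i$, i.e.\ in the North-West, and boxes where $R(w_\square)_{ij}=0$ impose no condition at all in the correct ($\ge$) direction. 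So your argument is really being carried out in standard untwisted conventions; it is correct there, but you must either run the whole proof with untwisted matrices or flip the inequality and locate the binding boxes where $R(w_\square)$ is large, as the paper does. Your aside that $w\le w_\square$ can be checked at the single essential box of $w_\square$ is likewise in the wrong direction for this comparison; the legitimate reduction of this kind is Sj\"ostrand's result quoted in Remark~\ref{rmk:sjo}.

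There is also a smaller gap in your converse: the bound ``the count is at most $i-\max(1,j-n)+1$'' does not suffice at every box. For $i>n$ and $j\ge n+1$ you need instead the trivial bound $2n-j+1$ (the total number of values $\ge j$), and for $j\le n$ with $n<i<j+n$ your bound gives only $i>n$, whereas the target is $n$; there you should write the count as $i-\#\{k\le i: w_k\le j-1\}$ and use the displacement hypothesis applied to the positions of the small values $1,\dots,j-1$ (equivalently, to $w^{-1}$, which satisfies the same hypothesis) to get $\#\{k\le i: w_k\le j-1\}\ge i-n$. With these two observations the case analysis does close, so the defect is in the specific bound you state rather than in the plan; but as written, both the stated Bruhat criterion and the forward-direction rank claims would fail under the conventions of the paper.
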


\begin{proof}
The given inequalities on the $w_i$ are equivalent to the 
equalities on the rank array $R(w)$ 
that, for all $1\leq i\leq n$,
$R(w)_{i,n+1-i}=i$ and 
$R(w)_{n+i,2n+1-i}=i$; note that these are the maximal possible values
of these entries of~$R$.
Indeed, it is clear that 
$R(w)_{i,n+1-i}=i$ and $2n+1-w_i\geq n+1-i$
are equivalent in the presence of~$R(w)_{i-1,n+2-i}=i-1$;
the others are symmetric in reflection about the antidiagonal.

Recall that $v\geq w$ in Bruhat order if and only if $X_v \subseteq X_w$,
which itself holds if and only if $R(v)\leq R(w)$ entrywise.
So the content of the lemma is that, to check this containment of~$w$ when $v=w_\square$,
it is enough to check that $R(w)_{i,n+1-i}=i$ and $R(w)_{n+i,2n+1-i}=i$
for each $1\leq i\leq n$.  This is true because 
$R(w_\square)$ is the unique entrywise minimum rank array satisfying
these conditions.  The inequalities 
\[R(w)_{i,j} \leq R(w)_{i+1,j}, R(w)_{i,j-1}, R(w)_{i-1,j}+1, R(w)_{i,j-1}+1\]
hold of the rank array of any permutation, 
and if one uses these iteratively to produce lower bounds for $R(w)$
the bounds one recovers are $R(w_\square)$.
\end{proof}

\begin{rmk}\label{rmk:sjo}
Lemma \ref{lem:square} is a special case of a result of Sj\"ostrand \cite[Theorem 4]{sjo}, which identifies elements in a Bruhat interval $[1,\pi]$, where $\pi$ is a permutation defined by inclusions (see Theorem \ref{thm:inclusions}), with rook placements on an associated skew Ferrers board.
\end{rmk}


For any rank array $R$,
the ideal $I(R)$ is in fact generated by minors of~$X$.  Let
\[I(R)_{i,j} := \langle R_{ij} + 1 \mbox{ minors of } \widetilde X_{[1,i],[j,2n]}\rangle\]
be one of its summands.
It is easy to check that 
\begin{equation}\label{eq:minors of X}
I(R)_{i,j} =
\begin{cases}
\langle R_{ij} + 1 \mbox{ minors of } X_{[1,i],[j-n,n]} \rangle
  & i\leq n, j\geq n+1 \\
\langle R_{ij} - n + j \mbox{ minors of } X_{[n+2-j,i],[1,n]} \rangle
  & i,j\leq n \\
\langle R_{ij} + n + 1 - i\mbox{ minors of } X_{[1,n],[j-n,2n-i]} \rangle
  & i,j\geq n+1 \\
\langle R_{ij} - i + j \mbox{ minors of } X_{[n+2-j,n],[1,2n-i]} \rangle
  & i\geq n+1, j\leq n.
\end{cases}
\end{equation}

The above submatrices of~$X$ are only of positive size
when $n+1<i+j<3n+1$.  For the remaining pairs $(i,j)$ our conventions arrange
that $I(R)_{ij}$ is either zero or the unit ideal.  It is zero for $i+j\leq n+1$
when $R_{i,j}\geq i$, and for $i+j\geq3n+1$ when $R_{i,j}\geq 2n+1-j$, 
and the unit ideal in other cases.
Deeming the unit ideal uninteresting, we lose no other generality in assuming
that $R_{i,j}=i$ for $i+j\leq n+1$ and $R_{i,j}=2n+1-j$ for $i+j\geq 3n+1$.
Let us call these rank arrays \newword{hexagonal}.  The name reflects that the
positions of entries which can vary in a hexagonal rank array form a hexagon.

The next proposition illustrates the connection between the poset of hexagonal rank arrays and the collection of (intersections of) Kazhdan-Lusztig ideals.
As just explained, the ``hexagonal'' assumption of the theorem is not an
important restriction; its use in the proofs will be confined to a few instances.

\begin{prop}\label{prop:intDecRank}
Let $R$ and $S$ be hexagonal rank arrays.  Then 
\begin{enumerate}\renewcommand\theenumi{\alph{enumi}}
\item\label{item:IDRcanonicalise} If
\[R_{i,j} > \min\{R_{i+1,j}, R_{i,j-1}, R_{i-1,j}+1, R_{i,j+1}+1\},\]
and $S$ is identical to~$R$ except that $S_{i,j}$ is equal to the displayed minimum,
then $I_{\rm full}(R)=I_{\rm full}(S)$.
\item\label{item:IDRdecompose} Suppose that there are indices $i$ and~$j$ such that 
\[R_{i,j}=R_{i-1,j}=R_{i,j+1}=R_{i-1,j+1}+1=:r.\]
Let $S$ and $S'$ be arrays identical to $R$ except that
$S_{i-1,j} = r-1$ and $S'_{i,j+1} = r-1$.  Then $I_{\rm full}(R) = I_{\rm full}(S)\cap I_{\rm full}(S')$.
\item\label{item:IDRsum} $I_{\rm full}(R)+I_{\rm full}(S) = I_{\rm full}(\min(R,S))$.
\item\label{item:IDRcontainment} If there are no indices $i$ and~$j$ satisfying the hypotheses
of (\ref{item:IDRcanonicalise}) for $I_{\rm full}(R)$ or $I_{\rm full}(S)$, then
then $I_{\rm full}(R)\subseteq I_{\rm full}(S)$ if and only if $R\geq S$ entrywise.
\item\label{item:IDRprime} If there are no indices $i$ and~$j$ satisfying the hypotheses
of (\ref{item:IDRcanonicalise}) or~(\ref{item:IDRdecompose}) for $I_{\rm full}(R)$, 
then $I_{\rm full}(R)$ is prime.  
\item\label{item:IDRprimarydcp} The primary components of
$I_{\rm full}(R)$ are the minimal ideals $I_{\rm full}(T)$ where $T$ ranges over all arrays
obtained from~$R$ by repeatedly making the replacements specified
in (\ref{item:IDRcanonicalise}) and~(\ref{item:IDRdecompose})
until neither is possible, using either of the two choices in~(\ref{item:IDRdecompose}).
\item\label{item:IDRobtainable} Suppose $I_{\rm full}(R)$ is nonzero.
Then it is a sum of prime ideals of the form $I_{\rm full}(w)$,
as well as an intersection of such prime ideals.
\end{enumerate}

Moreover, suppose $R$ and $S$ are of type~C.  Then the above remain true
when $I_{\rm full}$ is replaced by $I_{\rm sym}$ throughout, the ``identical \ldots\ except for''
clauses in (\ref{item:IDRcanonicalise}) and~(\ref{item:IDRdecompose})
have $S$ and $S'$ differ from~$R$ in two symmetrically-placed entries
to retain the type~C property, and (\ref{item:IDRobtainable}) uses type~$C$ simple reflections.
\end{prop}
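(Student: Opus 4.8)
The plan is to prove each of the seven items, leaning heavily on the dictionary between hexagonal rank arrays and Kazhdan--Lusztig ideals built in Section~\ref{sect:flag}, and on the fact (Proposition~\ref{prop:stratification}) that the ideals $I_{\rm full}(w)$ form a poset isomorphic to a Bruhat interval under inclusion, with the sum of two being a reduced intersection of others. The underlying geometric picture is that $I_{\rm full}(R)$ cuts out a union of Kazhdan--Lusztig varieties inside $Full$, so every algebraic assertion should be translatable into a statement about which opposite Schubert varieties $X^v$ meet $X^\circ_{w_\square}$ and how they sit inside one another.

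First I would establish item~(\ref{item:IDRcanonicalise}): the four displayed quantities are exactly the rank of the relevant North-East submatrix forced by the ranks of its immediate ``shrinkings'' (one fewer row, one fewer column, or the translates that lose a $\times$), because the rank of a matrix can exceed that of such a submatrix by at most one, and an extra column or row contributes at most one to the rank. Hence the size-$(R_{ij}+1)$ minors of $\widetilde X_{[1,i],[j,2n]}$ already lie in the ideal generated by the smaller minors coming from the neighboring entries, so lowering $R_{ij}$ to the displayed minimum does not change the ideal. Item~(\ref{item:IDRsum}) is immediate from the definition: summing the two ideals just takes, entry by entry, the stronger rank condition, which is $\min(R,S)$. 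For~(\ref{item:IDRcontainment}), reduce to the case where both arrays are ``canonical'' (no further (\ref{item:IDRcanonicalise})-moves possible); then $R\ge S$ entrywise gives containment directly from the generators, and the converse follows because a canonical rank array is recovered as the pointwise-minimal rank array of a permutation, i.e. the rank arrays of the permutations $v\le w_\square$ appearing with that ideal, so if $I_{\rm full}(R)\subseteq I_{\rm full}(S)$ then every primary component of the left side contains one of the right, forcing $R\ge S$; here I would invoke Lemma~\ref{lem:square} to stay inside $[1,w_\square]$ and the Bruhat-order characterization $X^v\subseteq X^w \iff R(v)\le R(w)$.

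The heart of the argument is items~(\ref{item:IDRdecompose}), (\ref{item:IDRprime}), (\ref{item:IDRprimarydcp}), and~(\ref{item:IDRobtainable}), and I expect~(\ref{item:IDRdecompose}) to be the main obstacle. One inclusion in~(\ref{item:IDRdecompose}) is formal: $S\ge R$ and $S'\ge R$ entrywise, so $I_{\rm full}(R)\subseteq I_{\rm full}(S)\cap I_{\rm full}(S')$. For the reverse I would argue that at a configuration $R_{i,j}=R_{i-1,j}=R_{i,j+1}=R_{i-1,j+1}+1=r$, the rank-$r$ locus of $\widetilde X_{[1,i],[j,2n]}$ decomposes set-theoretically according to whether the rank of the $(i-1,j)$-submatrix or of the $(i,j+1)$-submatrix drops to $r-1$ — a local ``box'' relation which is precisely the scheme-theoretic statement that a generic matrix Schubert determinantal condition degenerates this way. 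The cleanest route is to appeal to the already-established fact that $I_{\rm full}(v_1)+I_{\rm full}(v_2)$ is a reduced intersection of ideals $I_{\rm full}(w_k)$ (Proposition~\ref{prop:stratification}(1) and the refinement in Proposition~\ref{prop:bruhat} identifying the $w_k$ as least upper bounds): write $R=\min(R(v_1),R(v_2))$ for a suitable pair $v_1,v_2\le w_\square$ so that $S$ and $S'$ are exactly $\min$ of $R(v_1)$ or $R(v_2)$ with one extra rank-lowering, and then read off the claimed intersection from the Bruhat-order description of the least upper bounds. With~(\ref{item:IDRdecompose}) in hand, (\ref{item:IDRprimarydcp}) follows by induction: repeatedly applying (\ref{item:IDRcanonicalise}) and~(\ref{item:IDRdecompose}) must terminate because each (\ref{item:IDRdecompose})-move strictly raises some entry (bounded above by the hexagonal constraints), the terminal arrays are by~(\ref{item:IDRprime}) prime, and by~(\ref{item:IDRsum}) and~(\ref{item:IDRdecompose}) $I_{\rm full}(R)$ is their intersection, which is therefore reduced; minimality is then exactly the minimal primes among them. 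Item~(\ref{item:IDRprime}): a canonical rank array with no available (\ref{item:IDRdecompose})-move is the rank array $R(w)$ of a genuine permutation $w\le w_\square$ — one checks that the absence of both move types forces the entrywise constraints characterizing permutation rank arrays — so $I_{\rm full}(R)=I_{\rm full}(w)$ is prime by Proposition~\ref{prop:stratification}(1). Item~(\ref{item:IDRobtainable}) is then the combination: the intersection expression is~(\ref{item:IDRprimarydcp}), and the sum expression comes from writing the canonicalization of $R$ as $\min$ of finitely many $R(w)$, using~(\ref{item:IDRsum}).

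Finally, for the type~$C$ addendum, I would observe that every step above has a symmetric counterpart under reflection in the antidiagonal: a type~C rank array is forced to satisfy $R_{i,j}-R_{2n-i,2n+2-j}=i-j+1$, so each (\ref{item:IDRcanonicalise})- or (\ref{item:IDRdecompose})-move must be performed simultaneously on the symmetric pair of entries to stay within type~C, and the resulting arrays are still of type~C. The primality, reducedness, and intersection statements then transfer verbatim because Corollary~\ref{cor:definingEqnsSym} and Proposition~\ref{prop:stratification}(2) give the same package — prime ideals $I_{\rm sym}(v)$ for $v\in C_n\cap[1,w_\square]$, with sums being reduced intersections — and Proposition~\ref{prop:bruhat}'s symmetric-case argument shows the permutations appearing in a decomposition can be taken in $C_n$. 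The substitution $\sigma_{ij}=x_{ij}$ together with the redundancy analysis in the proof of Proposition~\ref{prop:grobnerInvertible} (that the below-diagonal minors of $\widetilde\Sigma$ merely replicate the above-diagonal ones) ensures $I_{\rm sym}(R)$ is the image of $I_{\rm full}(R)$ under $\pi_\Sigma$, so every rank-array identity descends. The one point needing care is that item~(\ref{item:IDRobtainable}) must use type~$C$ simple reflections — i.e. the prime ideals $I_{\rm sym}(w)$ appearing are indexed by $w\in C_n$ — which is exactly what the $C_n$-restriction in the analogue of Proposition~\ref{prop:bruhat} provides.
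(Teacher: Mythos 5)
Your outline for parts (\ref{item:IDRcanonicalise}), (\ref{item:IDRsum}) and (\ref{item:IDRprime}) matches the paper (minor-expansion containments; entrywise minimum of rank bounds; second differences of a canonical array forming a permutation matrix in $[1,w_\square]$ via Lemma \ref{lem:square}). But the core of the proposition is missing. Everything hard in parts (\ref{item:IDRdecompose}), (\ref{item:IDRcontainment}) and (\ref{item:IDRobtainable}) rests on one combinatorial fact that you assert but never prove: that a hexagonal rank array (once no moves of type (\ref{item:IDRcanonicalise}) apply) is the \emph{entrywise minimum} of finitely many permutation rank arrays $R(w)$ with $w\le w_\square$ --- equivalently, that for each position $(i,j)$ some irreducible component's rank array agrees with $R$ at $(i,j)$. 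The paper proves this by introducing the ``dual'' moves that \emph{increment} entries (the analogues of (\ref{item:IDRcanonicalise}) and (\ref{item:IDRdecompose}) read upward) and showing, by an induction on how increments propagate, that the two branches $S$ and $S'$ of a dual decompose-move change entries only weakly South-West of $(i,j)$, respectively weakly North-East of $(i-1,j+1)$, hence in disjoint sets of positions; this disjointness is what yields the min-representation, and it is also what lets one canonicalise $R$ and $S$ in part (\ref{item:IDRcontainment}) ``without changing the value at a chosen position,'' which is how the paper gets the converse there. Your version of (\ref{item:IDRcontainment}) (``every primary component of the left side contains one of the right, forcing $R\ge S$'') and of (\ref{item:IDRobtainable}) (``writing the canonicalization of $R$ as a min of finitely many $R(w)$'') simply presuppose this fact. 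Worse, your ``cleanest route'' to (\ref{item:IDRdecompose}) asks to write $R=\min(R(v_1),R(v_2))$ for a \emph{pair} of permutations with $S,S'$ obtained by ``one extra rank-lowering''; a general hexagonal array is not a minimum of just two permutation rank arrays, and this construction is not well defined, so the argument is circular: it uses a special case of (\ref{item:IDRobtainable}) to prove (\ref{item:IDRdecompose}), while your (\ref{item:IDRobtainable}) is deduced from (\ref{item:IDRprimarydcp}), which in turn needs (\ref{item:IDRdecompose}). The alternative ``local box relation'' you mention is only a set-theoretic plausibility; the content of (\ref{item:IDRdecompose}) is the ideal-theoretic (reduced) equality, and that is exactly what needs the machinery above. (The paper instead derives (\ref{item:IDRdecompose}) \emph{from} (\ref{item:IDRobtainable}) and (\ref{item:IDRcontainment}), by showing no permutation rank array below $R$ can agree with $R$ in all four entries of the $2\times2$ box.)

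The type~C addendum has a second, smaller gap. It does not ``transfer verbatim'' by pushing forward along $\pi_\Sigma$: images of intersections of ideals under a surjection need not be intersections of the images, and primality does not descend formally either. The paper handles type~C by performing the (dual) moves in symmetric pairs and then re-proving the disjointness of the changed-entry sets for $S$ and $S'$, which requires an extra argument (the changed entries form a Ferrers shape, and the type~C relation $R_{2n-i,2n+2-j}=R_{i,j}-i+j-1$ rules out collision with the mirrored positions); your sketch omits this check. So the overall strategy is in the right spirit, but the proposal is incomplete at precisely the step that makes the proposition true.
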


\begin{ex}\label{ex: symmetricDecompose}
Consider the hexagonal type~C rank array
\[R = \begin{bmatrix}
1&1&1&1&1&1&1&1\\
2&2&2&2&1&1&1&1\\
3&3&3&2&2&1&1&1\\
4&4&3&2&2&1&1&1\\
5&5&4&3&3&2&2&1\\
6&6&5&4&3&3&2&1\\
7&7&6&5&4&3&2&1\\
8&7&6&5&4&3&2&1
\end{bmatrix}.\]
Pursuant to Remark~\ref{rmk:no smS ideals},
the ideal $I_{\rm sym}(R)$ is the analogue of the North-East rank ideal $J_{\rm sym}(w)$ where $w$ is the partial permutation with matrix
\[\begin{bmatrix}0&0&0&1\\0&0&0&0\\1&0&0&0\\0&0&0&0\end{bmatrix}.\]
It is generated by the $3\times 3$ minors of $\Sigma$ and the $2\times 2$ minors of
its submatrices $\Sigma_{[1,2],[1,4]}$ and $\Sigma_{[1,4],[2,4]}$.

By part~(\ref{item:IDRobtainable}), the ideal $I_{\rm sym}(R)$ is a sum of prime ideals of the form $I_{\rm sym}(w)$,
which can be recovered by the procedure in part~(\ref{item:IDRprimarydcp}).
By part (\ref{item:IDRprime}), $I_{\rm sym}(R)$ is not itself prime,
since the replacement in part~(\ref{item:IDRdecompose}) is possible,
for $(i,j) = (3,4)$ and $(6,5)$.  Concretely, take $(i,j) = (3,4)$.
Then the two resulting arrays, called $S$ and $S'$ in (\ref{item:IDRdecompose}), are respectively
\[
R_1 = \begin{bmatrix}
1&1&1&1&1&1&1&1\\
2&2&2&\underline1&1&1&1&1\\
3&3&3&2&2&1&1&1\\
4&4&3&2&2&1&1&1\\
5&5&4&3&3&2&2&1\\
6&6&5&4&3&\underline2&2&1\\
7&7&6&5&4&3&2&1\\
8&7&6&5&4&3&2&1
\end{bmatrix}
\mbox{ and }
R_2 = \begin{bmatrix}
1&1&1&1&1&1&1&1\\
2&2&2&2&1&1&1&1\\
3&3&3&2&\underline1&1&1&1\\
4&4&3&2&2&1&1&1\\
5&5&4&3&\underline2&2&2&1\\
6&6&5&4&3&3&2&1\\
7&7&6&5&4&3&2&1\\
8&7&6&5&4&3&2&1
\end{bmatrix},
\]
in both of which two (underlined) entries have been replaced following the prescription at the end of the proposition,
and we have $I_{\rm sym}(R) = I_{\rm sym}(R_1)\cap I_{\rm sym}(R_2)$.
Now $I_{\rm sym}(R_1)$ is prime by part~(\ref{item:IDRprime}), while $I_{\rm sym}(R_2)$ is subject to 
further replacements as in part~(\ref{item:IDRdecompose}), namely at $(i,j) = (4,4)$ or $(6,4)$.  
Taking the former, we produce
\[
R_3 = \begin{bmatrix}
1&1&1&1&1&1&1&1\\
2&2&2&2&1&1&1&1\\
3&3&3&\underline1&1&1&1&1\\
4&4&3&2&2&1&1&1\\
5&5&4&3&2&\underline1&2&1\\
6&6&5&4&3&3&2&1\\
7&7&6&5&4&3&2&1\\
8&7&6&5&4&3&2&1
\end{bmatrix}
\mbox{ and }
R_4 = \begin{bmatrix}
1&1&1&1&1&1&1&1\\
2&2&2&2&1&1&1&1\\
3&3&3&2&1&1&1&1\\
4&4&3&2&\underline1&1&1&1\\
5&5&4&3&2&2&2&1\\
6&6&5&4&3&3&2&1\\
7&7&6&5&4&3&2&1\\
8&7&6&5&4&3&2&1
\end{bmatrix},
\]
with $I_{\rm sym}(R_2) = I_{\rm sym}(R_3)\cap I_{\rm sym}(R_4)$.  
Now $I_{\rm sym}(R_4)$ is prime, 
while $I_{\rm sym}(R_3)$ is subject to the replacements of the sort in part~(\ref{item:IDRcanonicalise}),
in fact twice in succession, with $(i,j) = (2,4)$ and then $(3,3)$.  These yield
\[
R_5 = \begin{bmatrix}
1&1&1&1&1&1&1&1\\
2&2&2&\underline1&1&1&1&1\\
3&3&\underline2&1&1&1&1&1\\
4&4&3&2&2&1&1&1\\
5&5&4&3&2&1&\underline1&1\\
6&6&5&4&3&\underline2&2&1\\
7&7&6&5&4&3&2&1\\
8&7&6&5&4&3&2&1
\end{bmatrix}
\]
with $I_{\rm sym}(R_3) = I_{\rm sym}(R_5)$.  By part~(d), $I_{\rm sym}(R_1)\subsetneq I_{\rm sym}(R_5)$,
so the latter can be excluded from the primary decomposition, but there are no other comparabilities between the primes produced.  
We conclude that the primary decomposition of $I_{\rm sym}(R)$ is
\[I_{\rm sym}(R) = I_{\rm sym}(R_1)\cap I_{\rm sym}(R_4) = I_{\rm sym}(16472538)\cap I_{\rm sym}(15672348).\]
\end{ex}

\begin{proof}[Proof of Proposition~\ref{prop:intDecRank}]
We prove the parts out of order.  In particular the proof of part~(\ref{item:IDRdecompose}) 
will come late; don't be misled by the fact that we refer to the operation
on arrays it invokes before proving what effect this replacement has on ideals.
  
For both $I_{\rm full}$ and $I_{\rm sym}$,
part~(\ref{item:IDRcanonicalise}) follows from containments among ideals of minors, since
the $r$-minors of $\tilde X_{[1,i],[j,2n]}$ are included among
the $r$-minors of a larger matrix, and are generated by the
$(r-1)$-minors of a matrix one row or column smaller, by expansion
along that row or column.

Part~(\ref{item:IDRsum}) is also easy,
since the minors generating $I_{\rm full}(R)+I_{\rm full}(S)$ appear among those generating
$I_{\rm full}(\min(R,S))$ and the other generators of the former are larger minors,
which are redundant.  For $I_{\rm sym}$ the argument is the same.

We will show part~(\ref{item:IDRprime}) by
comparing to the description of the
Kazhdan-Lusztig varieties in Proposition~\ref{prop:stratification}.
Define a $2n\times2n$ array $w$ by
\[w_{i,j}=R_{i,j}-R_{i,j+1}-R_{i-1,j}+R_{i-1,j+1}.\]
If no replacement as in part~(\ref{item:IDRcanonicalise}) is possible in~$R$,
then $R_{i,j}-R_{i,j+1}\in\{0,1\}$, so that $w_{i,j}$, which is the
difference of two such differences, equals $-1$ or 0 or~1.
If further no replacement as in part~(\ref{item:IDRdecompose}) is possible,
the only state of affairs allowing $w_{i,j}=-1$ is ruled out, so $w_{i,j}\in\{0,1\}$.
Also
\[\sum_{i=1}^{2n}w_{i,j} = R_{i,j}-R_{i,j+1}-R_{i-1,j}+R_{i-1,j+1} = 1\]
so there is a unique 1 in every column of~$w$; by a symmetric argument
there is a unique 1 in every row of~$w$.  So $w$ is a permutation matrix.
Lastly, the hexagonality condition implies that no 1 in~$w$ has
$|w_i-i|>n$, so that $w\in[1,w_\square]$ by Lemma~\ref{lem:square}.
So $I(R) = I(w)$ is one of the prime ideals of Proposition~\ref{prop:stratification}.
As for $I_{\rm sym}$, if $R$ is of type~$C$ then from the
definition we get $w_{i,j} = w_{2n+1-i,2n+1-j}$, which 
implies that $w$ commutes with $w_0$ so that 
$I_{\rm sym}(R) = I_{\rm sym}(w)$ is again a prime ideal
appearing in Proposition~\ref{prop:stratification}.

To prove part~(\ref{item:IDRobtainable}), we will show that
every nonzero ideal $I_{\rm full}(R)$ or $I_{\rm sym}(R)$ is a sum of prime ideals
of the form $I_{\rm full}(w)$ or $I_{\rm sym}(w)$ accordingly.  This implies
that $I_{\rm full}(R)$ resp.\ $I_{\rm sym}(R)$ 
is the ideal of an intersection of Kazhdan-Lusztig varieties, by part~(\ref{item:IDRsum}).
Such intersections are unions of Kazhdan-Lusztig varieties as well.  
To identify them precisely,
the cells $X_v^\circ\cap X^{w_\square}_\circ$ that appear in an intersection
$\bigcap_i (X_{w_i}\cap X^{w_\square}_\circ)$ are just those where $v$
is greater than or equal to each $w_i$ in Bruhat order,
and less than or equal to $w_\square$.
Thus $I_{\rm full}(R)$ is the intersection of any collection of $I_{\rm full}(w)=I_{\rm full}(R(w))$
where $w$ ranges over a set of permutations including all the minimal such~$v$
and perhaps some greater ones.
The analogue is also true for cells 
$X_{G,v}^\circ\cap X^{G,w_\square}_\circ$
in the notation of Section~\ref{sect:typeB}, giving the type $C$ statement.

We begin with $I_{\rm full}(R)$.  We may assume to start with that $R$ allows
no replacements as in part~(\ref{item:IDRcanonicalise}), since making them leaves $R$ unchanged.
Our claim is that by making replacements
``dual'' to those in parts (\ref{item:IDRcanonicalise}) and~(\ref{item:IDRdecompose}),
we eventually resolve $R$ into a collection of rank arrays $R(w)$
where these substitutions are no longer possible.  The precise replacements
in question are these: if
\begin{equation}\label{eq:IDRdual1}
R_{i,j}<\max\{R_{i+1,j}-1,R_{i,j-1}-1,R_{i-1,j},R_{i,j+1}\}
\end{equation}
then change the array by incrementing the $(i,j)$ entry;
and if
\begin{equation}\label{eq:IDRdual2}
R_{i,j}=R_{i-1,j}=R_{i,j+1}=R_{i-1,j+1}+1=r
\end{equation}
then make two altered copies of the array, one in which the $(i,j)$ entry is incremented
and another in which the $(i-1,j+1)$ entry is.

Observe that these replacements are possible in exactly the same conditions as 
are those of parts (\ref{item:IDRcanonicalise}) and~(\ref{item:IDRdecompose}),
so that the process will indeed terminate at rank arrays of permutations by part~(\ref{item:IDRprime}).
Let the resulting arrays be $S_1,\ldots,S_m$.  Each $S_k$ is entrywise greater than or equal to~$R$
by construction. 

Now, by assumption the first replacement made to~$R$ is one of type~\eqref{eq:IDRdual2}.
Let $S$ and $S'$ be the rank arrays obtained after making all possible replacements 
of type~\eqref{eq:IDRdual1} on the results thereof, so that in $S$ the $(i,j)$ entry
has been incremented and in $S'$ the $(i-1,j+1)$ entry has.  Consider~$S$.
It is not difficult to show inductively,
using the fact that replacements \eqref{eq:IDRdual1} did not apply to~$R$,
that in the course of producing $S$,
after an entry at position $(i',j')$ is incremented, the only entries newly in the conditions \eqref{eq:IDRdual1}
and therefore subject to be incremented are those at positions $(i'-1,j')$ and $(i',j'+1)$.
Therefore, all entries in which $S$ differs from~$R$ are weakly South-West of~$(i,j)$.

Similarly, all entries in which $S'$ differs from~$R$ are weakly North-East of $(i-1,j+1)$.
So these sets of entries are disjoint, and in each position
at least one of $S$ and~$S'$ agrees with~$R$.  
By induction on the number of replacements, for every position $(i,j)$ in~$R$ there exists a~$k$
so that $(S_k)_{i,j}=R_{i,j}$.  It follows that $R$ is the entrywise minimum of all the $S_k$.
Therefore, by part~(\ref{item:IDRsum}), the ideal $I_{\rm full}(R)$ is a sum of Kazhdan-Lusztig prime ideals,
which is what we needed above.

In adjusting this argument for $I_{\rm sym}$, we must make the replacements
\eqref{eq:IDRdual1} and~\eqref{eq:IDRdual2} in symmetrical pairs to retain the type~$C$ property.
Then the only remaining thing to check is that that $S$ and~$S'$ still differ in
disjoint sets of entries.  Let us take the case that $(i,j)$ sits above the main diagonal.
The entries of~$S'$ which differ from~$R$ are still all restricted to the positions 
weakly North-East of $(i-1,j+1)$, or the mirror image, weakly South-West of $(2n+1-i,2n+1-j)$.
So we need to establish that the entries of $S$ which differ from~$R$ don't impinge
on the positions weakly South-West of $(2n+1-i,2n+1-j)$.  But these entries are all
$r$s in~$R$ which get changed to $(r+1)$s in~$S$, and form a Ferrers shape with
its corner in the South-West,
so neither of the positions $(2n+1-i,2n+1-j)$ or $(2n+1-i,2n+1-j)$ may be among them because 
$R_{2n-i,2n+2-j} = R_{2n+1-i,2n+2-j}-1 = R_{2n-i,2n+1-j}-1$, 
and thus neither may $(2n+1-i,2n+1-j)$ or positions further South-West.

For part~(\ref{item:IDRcontainment}), it is clear from the generators
that $I_{\rm full}(R)\subseteq I_{\rm full}(S)$ when $R\geq S$.  Inversely, if $R\not\geq S$,
it is enough to produce primes $I_{\rm full}(v)$ contained in~$I_{\rm full}(R)$ and $I_{\rm full}(w)$ containing $I_{\rm full}(S)$
for which $I_{\rm full}(v)\not\subseteq I_{\rm full}(w)$.  Choose $(i,j)$ so that $R_{i,j}<S_{i,j}$.
As argued above, there is a way of repeatedly applying replacements 
\eqref{eq:IDRdual1} and~\eqref{eq:IDRdual2} to~$R$ 
to yield the rank array $R(v)$ of a prime ideal without changing the value at position $(i,j)$.
In the ``dual'' fashion $R(w)$ can be produced from $S$ by applying the replacements
(\ref{item:IDRcanonicalise}) and~(\ref{item:IDRdecompose}) without changing the value at position $(i,j)$.
But then $R(v)_{i,j}<R(w)_{i,j}$ 
implies $v\not\geq w$ in Bruhat order, so $I_{\rm full}(v)\not\subseteq I_{\rm full}(w)$ as desired.
The argument for $I_{\rm sym}$ is exactly analogous.

We move on to part~(\ref{item:IDRdecompose}).  Using part~(\ref{item:IDRobtainable})
it is enough to show that a prime $I_{\rm full}(w)$ contains $I_{\rm full}(R)$ 
if and only if it contains either $I_{\rm full}(S)$ or $I_{\rm full}(S')$, or the same for $I_{\rm sym}$.
Using part~(\ref{item:IDRcontainment}), this translates to showing that
a rank array $R(w)$ is less than $R$ if and only if it's less than one of $S$ or~$S'$.
The ``only if'' direction follows because $S$ and $S'$ are less than $R$.
The ``if'' direction holds because no $R(w)$ can agree in all four entries
$(i,j)$, $(i,j+1)$, $(i-1,j)$ and $(i-1,j+1)$ with $R$ by part~(\ref{item:IDRprime}),
since a replacement of type~(\ref{item:IDRdecompose}) is possible otherwise,
so one of these entries must be lesser; in fact one of the $(i,j+1)$ or $(i-1,j)$
entries must be lesser, since a replacement of type~(\ref{item:IDRcanonicalise})
is possible if only one or both of the $(i,j)$ and $(i-1,j+1)$ entries are.

Finally, part~(\ref{item:IDRprimarydcp}) is an immediate consequence of
parts (\ref{item:IDRcanonicalise}), (\ref{item:IDRdecompose}), and~(\ref{item:IDRprime}).
\end{proof}

\subsection{Enumeration}\label{ssect:enumeration}
Let $x(n)$, $y(n)$, and $\sigma(n)$ denote the number of irreducible strata in the stratifications of $Mat_n(\mathbb{K})$, $Up_n(\mathbb{K})$, and $Sym_n(\mathbb{K})$ from Proposition \ref{prop:stratification}. Equivalently, $x(n)$ is the number of permutations in the Bruhat interval $[1,w_\square]\subseteq S_{2n}$, $y(n)$ is the number of permutations in the Bruhat interval $[w_{\rm up}, w_\square]\subseteq S_{2n}$, and $\sigma(n)$ is the number of permutations in $[1, w_\square]\cap C_n$. 
A noteworthy feature of these sets of permutations 
is that the Bruhat order conditions on them may all be reinterpreted as
\textbf{rook placement} problems on variously shaped boards.
See \cite[Ch.~2]{Stanley2012} for a basic overview of enumeration of rook placements.

\begin{rmk}
Interpreting certain Bruhat intervals in terms of rook placements is not a new idea. For example, Sj\"ostrand did this for certain intervals $[1,\pi]$ in \cite{sjo} (as we discussed in Remark \ref{rmk:sjo}). Furthermore, in light of results in \cite{sjo}, it is not surprising that rook placements play an important role in the proof of our next result. 
\end{rmk}

\begin{prop}\label{prop:Stirling}
The number $x(n)$ 
is given by
$$
x(n)  =  \sum_{k = 1}^{n+1}  ( (k-1)!\, S(n+1, k))^{2}
$$
where $S(n,k)$ denotes the Stirling number of the second kind.
The number $\sigma(n)$ 
is given by
$$
\sigma(n)  =  \sum_{k = 1}^{n+1}   (k-1)!\, S(n+1, k).
$$
\end{prop}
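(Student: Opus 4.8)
The plan is to recognize both $x(n)$ and $\sigma(n)$ as rook-placement counts, as hinted by the remark preceding the proposition, and then reduce to the classical rook polynomial of a staircase board. First I would apply Lemma~\ref{lem:square}: $w\in[1,w_\square]\subseteq S_{2n}$ precisely when $|w(i)-i|\le n$ for all $i$, so that $x(n)$ is the number of $2n\times 2n$ permutation matrices supported on the band board $\mathcal B_n=\{(i,j):|i-j|\le n\}$. For $\sigma(n)$ the extra requirement $w\in C_n$ means $w=w_0ww_0$, i.e.\ the permutation matrix is invariant under the $180^\circ$ rotation $\rho\colon(i,j)\mapsto(2n+1-i,2n+1-j)$; note $\rho$ preserves $\mathcal B_n$.

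Next I would split the $2n\times 2n$ grid into its four $n\times n$ quadrants NW, NE, SW, SE. The crucial observation is that the band condition \emph{decouples}: NW and SE lie entirely in $\mathcal B_n$, while the allowed cells of NE form the staircase $\{(i,n+v):1\le v\le i\}$ and those of SW the mirrored staircase $\{(n+p,u):p\le u\le n\}$, each with row-length multiset $\{1,2,\dots,n\}$. If a permutation matrix on $\mathcal B_n$ has $k$ ones in NE, counting ones along rows and columns forces $k$ in SW, $n-k$ in NW, $n-k$ in SE; moreover the NE and SW ones occupy disjoint rows and disjoint columns, so they are placed independently, and once they are fixed the NW (resp.\ SE) ones form an unconstrained bijection between a determined $(n-k)$-set of rows and a determined $(n-k)$-set of columns. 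Writing $r_k$ for the number of placements of $k$ non-attacking rooks on the staircase with row lengths $1,\dots,n$, this gives $x(n)=\sum_{k=0}^n\bigl(r_k\,(n-k)!\bigr)^2$. I would then quote the classical evaluation $r_k=S(n+1,\,n+1-k)$ (the staircase has the same nonempty rows as the triangular board whose rook numbers are Stirling numbers; see \cite[Ch.~2]{Stanley2012}) and reindex by $\ell=n+1-k$, so that $n-k=\ell-1$, to obtain $x(n)=\sum_{\ell=1}^{n+1}\bigl((\ell-1)!\,S(n+1,\ell)\bigr)^2$.

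For $\sigma(n)$ I would run the same quadrant analysis but now under $\rho$, which interchanges NW with SE and NE with SW. Hence a $\rho$-invariant matrix on $\mathcal B_n$ is determined by its ones in rows $[1,n]$ (the NW and NE blocks), the other two blocks being the $\rho$-images of these; and the requirement that each column be used exactly once amounts to saying that the set of columns occupied in rows $[1,n]$ is a transversal of the $n$ complementary pairs $\{c,2n+1-c\}$. Given a $k$-rook placement on the NE staircase, this transversal condition determines exactly which $n-k$ columns of $[1,n]$ are left for the NW block, after which the NW block is an arbitrary bijection onto those columns (with the corresponding $n-k$ free rows), and the induced SW, SE blocks are then automatically admissible in $\mathcal B_n$ since $\rho$ preserves $|i-j|$. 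So $\sigma(n)=\sum_{k=0}^n r_k\,(n-k)!$, and the same substitution $\ell=n+1-k$ gives $\sigma(n)=\sum_{\ell=1}^{n+1}(\ell-1)!\,S(n+1,\ell)$.

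The index manipulations and the citation of the staircase rook polynomial are routine. The step I expect to require the most care --- essentially the only delicate point --- is the last one in the symmetric case: verifying that the ``transversal of complementary pairs'' constraint on the columns of the top half is precisely what makes the NW block a free matching once the NE block is chosen, with the bottom half entirely forced, so that the map sending a band matrix in $C_n$ to its (NE staircase placement, NW matching) is a bijection and each element of $C_n\cap[1,w_\square]$ is counted exactly once.
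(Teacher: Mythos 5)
Your proposal is correct and is essentially the paper's own proof: both use Lemma~\ref{lem:square} to turn $x(n)$ and $\sigma(n)$ into banded rook-placement counts, decompose the $2n\times 2n$ matrix into quadrants with two staircase blocks (whose $k$-rook placements are counted by $S(n+1,n+1-k)$) and two free blocks (counted by $(n-k)!$), and then halve the data via the $180^\circ$ symmetry in the type~$C$ case; the only difference is a harmless swap of which quadrants carry the staircase constraint, coming from the paper's convention that $P(w)$ is the matrix of $w_0w$. Your transversal-of-complementary-pairs check in the symmetric case simply spells out the step the paper dispatches with ``$A$ determines $D$ and $B$ determines $C$.''
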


\begin{proof}
According to Lemma \ref{lem:square}, in order to find the number $x(n)$ 
we need to count the permutations $w \in S_{2n}$
such that 
\begin{equation}\label{eq:permineq}
| w(i) - i |  \leq n
\end{equation}
  For the symmetric case,
we need permutations in $S_{2n}$ satisfying the same condition
plus the extra symmetry condition $w(i) + w(2n+1 -i)  = 2n+1$.

To explain one solution to this counting problem, 
we consider the problem of constructing the 
permutation matrices that satisfy these conditions.
We represent these permutation matrices as block matrices 
$$
P(w) =
\begin{pmatrix}
A & B \\
C & D
\end{pmatrix}.
$$
The condition (\ref{eq:permineq}) on a permutation $w$,
recalling our conventions for permutation matrices from Section~\ref{sect:flag}, 
is equivalent to requiring that 
the nonzero entries of matrix $A$ fall on or below the antidiagonal,
and those of matrix $D$ fall on or above the antidiagonal.
Note that taking an allowable permutation $w$ and looking
at the induced matrix $A$ gives a rook placement of some  number $k$ of
 non-attacking rooks on a Ferrers board of staircase shape, i.e.\
with $1$ box in the first row, $2$ boxes in the second, $\ldots$, and $n$ boxes in the
$n$\/th row.  According to Corollary 2.4.2 of \cite{Stanley2012}, 
the number of such rook placements with $k$ rooks is $S(n+1, n+1 -k)$.
Note that if $A$ has exactly $k$ ones in it, $D$ must also have exactly
$k$ ones in it, and by symmetry also gives a nonattacking rook placement
in a tableau of the same shape.  Finally, once we have specified $A$ and
$D$ each with $k$ ones, we get unique $(n-k) \times (n-k)$ submatrices
of $B$ and $C$ where we can put a permutation matrix, to complete
the permutation matrix $P_{w}$.  Summing over $k$ yields the total
number of permutations
$$
\sum_{k = 0}^{n}  ((n-k)!\,S(n+1, n+1 -k))^{2},
$$
which is the desired formula after reindexing.

In the symmetric case choosing one matrix $A$ completely determines
the matrix $D$, and the matrix $B$ determines the matrix $C$.  This removes
the square from the formula.
\end{proof}

\begin{prop}\label{prop:Genocchi}
The number $y(n)$ 
is the median Genocchi number $H_{2n+1}$.
\end{prop}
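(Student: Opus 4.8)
The plan is to turn the enumeration $y(n)=\#[w_{\rm up},w_\square]$ into the permanent of an explicit $2n\times 2n$ zero--one matrix whose support is a band with one corner truncated along the antidiagonal, and then to recognize this permanent as the median Genocchi number $H_{2n+1}$. The first step continues the rook--placement approach already used for Proposition \ref{prop:Stirling}; the second is where the combinatorics of Genocchi numbers actually enters.

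\textbf{Step 1 (reduction to a rook count).} By Lemma \ref{lem:square}, $v\le w_\square$ is equivalent to $|v(i)-i|\le n$ for all $i$. For $v\ge w_{\rm up}$, recall that $v\ge w$ in Bruhat order if and only if $R(v)\le R(w)$ entrywise, and that, by the theory of essential sets (the same fact used in Section \ref{sect:typeA} to generate $I_{\rm full}(w)$ by essential minors), it suffices to impose $R(v)_{ij}\le R(w)_{ij}$ only at the essential positions of $w$. Since Section \ref{sect:typeA} computes that $I_{\rm full}(w_{\rm up})$ is generated by the $(n+1)$-minors of $\widetilde X_{[1,n+i],[i+1,2n]}$ for $1\le i\le n-1$, the essential positions of $w_{\rm up}$ are $(n+i,i+1)$, $1\le i\le n-1$, with rank value $n$. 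As $P(v)$ has exactly one $1$ in each column, the condition $R(v)_{n+i,i+1}\le n$ says precisely that the $1$ in column $i+1$ of $P(v)$ lies in a row $\le n+i$; letting the column index run, this is $v^{-1}(2n+1-j)\le n+j$ for $1\le j\le n-1$. Because $w_{\rm up}$ is an involution, the interval $[w_{\rm up},w_\square]$ is closed under $v\mapsto v^{-1}$, and combining the inequalities for $v$ with the same ones for $v^{-1}$ gives, after a short check, that $v\in[w_{\rm up},w_\square]$ if and only if the $1$s of the ordinary permutation matrix of $v$ (with a $1$ at $(i,v(i))$) all lie in the region $\{(i,j): |i-j|\le n,\ i+j\le 3n+1\}$. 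Equivalently, $y(n)$ counts the permutations $v$ of $[2n]$ with $\max(1,i-n)\le v(i)\le n+\min(i,2n+1-i)$ for every $i$, i.e. $y(n)=\mathrm{per}\,M$ for the $2n\times 2n$ matrix $M$ with $M_{ij}=1$ exactly when $|i-j|\le n$ and $i+j\le 3n+1$.

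\textbf{Step 2 (identification with $H_{2n+1}$).} It remains to prove $\mathrm{per}\,M=H_{2n+1}$, which is the heart of the matter. The board $M$ is a symmetric band about the main diagonal with one corner sliced off along the antidiagonal, and rook boards of this ``doubly staircase'' type are exactly the ones classically enumerated by median Genocchi numbers. I would make this precise by pairing antidiagonal cells: group column $j$ with column $2n+1-j$ for $1\le j\le n$, and observe that the two rooks occupying such a pair of columns are forced into the rows $\{1,\dots,n+j\}$ with exactly one of them confined to the narrower central band $\{n+1-j,\dots,n+j\}$. This realizes the placements counted by $y(n)$ as Dellac--type configurations on $2n$ rows and $n$ (double) columns, from which $\mathrm{per}\,M=H_{2n+1}$ follows from the known enumeration of such configurations; the small cases $y(1)=2=H_3$ and $y(2)=8=H_5$ fix the normalization. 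An alternative route is to introduce refined counts $y_{n,k}$ (admissible $v$ for which a suitable statistic, e.g.\ $\#\{i\le n: v(i)\le n\}$, equals $k$) and verify that they obey the Seidel recursion defining the Genocchi triangle, with $\sum_k y_{n,k}=H_{2n+1}$.

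\textbf{The main obstacle} is Step 2. The reduction in Step 1 is essentially bookkeeping, but $M$ is neither a Ferrers nor a skew Ferrers board, so no off--the--shelf product or hook formula computes $\mathrm{per}\,M$; the real work is to use the precise two--sided shape of $M$ (a diagonal band truncated along the antidiagonal) to reach a genuine median--Genocchi model — via the column--pairing/Dellac route or via the Seidel recursion — and to carry through the resulting bijection or induction.
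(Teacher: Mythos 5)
Your Step 1 is correct and is essentially the same reduction the paper makes at the start of its own argument: by Lemma \ref{lem:square} together with the essential-set criterion applied to $w_{\rm up}$, the interval $[w_{\rm up},w_\square]$ is counted by placements of $2n$ non-attacking rooks on a width-$(2n+1)$ band with one triangular corner truncated along the antidiagonal (your board is the paper's board rotated by $180^\circ$, which does not affect the count; the $n=2$ check gives $8$ either way). The problem is Step 2, which you yourself flag as the main obstacle: it is not a presentational gap but the entire content of the proposition, and neither of your two suggested routes is carried out. In particular, the pairing of columns $j$ and $2n+1-j$ does not produce Dellac configurations: in a Dellac configuration on a $2n\times n$ grid both dots of the $j$th column must lie in the row window $[j,\,n+j]$, whereas your paired columns impose the two different windows $[1,\,n+j]$ and $[n+1-j,\,n+j]$. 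So ``follows from the known enumeration of such configurations'' cannot be invoked as stated; one would still need a nontrivial bijection (or an argument that the two boards have the same number of full rook placements), and none is given. The Seidel-recursion alternative is likewise only named: you would have to choose a statistic on the rook placements, prove that the refined counts satisfy the Seidel triangle recurrence, and that verification is exactly where the work lies.

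For comparison, the paper does precisely this hard part: Lemma \ref{lem:Genocchi} converts the rook placements into ordered lists of disjoint chains in the poset $P(n)$ (the product of an $n$-chain and a $2$-chain), each chain containing both barred and unbarred elements, and the proof of Proposition \ref{prop:Genocchi} then establishes the Seidel triangle recurrence for these objects via an explicit, five-case bijection between $G(n+1)$ and triples $(g,i,j)$ with $g\in G(n)$, identifying the refined counts with the entries $s_{k,i}$ of the Seidel triangle. Some argument of this kind --- or a genuine, worked-out bijection with an established median-Genocchi family such as Dellac configurations or Dumont-type permutations --- must be supplied before your outline becomes a proof.
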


Let $P(n)$ be the poset on $\{1, \ldots, n, \bar 1, \ldots, \bar n\}$ whose covering
relations are those of the forms $\bar i \succdot i$, $i+1 \succdot i$, and $\overline{i+1} \succdot \bar i$.
That is, $P(n)$ is the product of chains of lengths $n$ and~2.  
Let $\widetilde P(n)$ be the poset on the larger set $\{1, \ldots, n+1, \bar 0, \ldots, \bar n\}$
with the order defined in the analogous way.

\begin{lem}\label{lem:Genocchi}
The prime components counted by $y(n)$ are in bijection with the set
\[G(n) = \left\{\begin{minipage}{24em}
ordered lists of disjoint chains in $P(n)$,\\ each chain containing
both barred and unbarred elements\end{minipage}\right\}.\]
\end{lem}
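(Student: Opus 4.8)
The plan is to replace the set indexing the prime components counted by $y(n)$ --- the Bruhat interval $[w_{\rm up},w_{\square}]\subseteq S_{2n}$ --- by an explicit ``rook placement'' model, and then to exhibit a bijection from that model to $G(n)$. Lemma~\ref{lem:square} already gives $v\le w_{\square}\iff|v(i)-i|\le n$ for all $i$. For the other inequality I would compute, using the ``sorted prefixes'' criterion for Bruhat order (equivalently the inequality $R(v)\le R(w_{\rm up})$ on rank arrays recalled in the proof of Lemma~\ref{lem:square}), that $v\ge w_{\rm up}$ is equivalent to $v^{-1}(n+t)\le 2n+1-t$ for all $t\in[n]$; i.e.\ each large value $n+1,\dots,2n$ occurs early enough. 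Writing $P(v)$ in $n\times n$ blocks as $\left(\begin{smallmatrix}A&B\\C&D\end{smallmatrix}\right)$, these two conditions translate exactly into: $A$ is a partial permutation matrix whose ones lie on or below the antidiagonal, $D$ is one whose ones lie on or above the antidiagonal, $C$ is one whose ones lie weakly to the right of the main diagonal, and $B$ is arbitrary, with $A,B,C,D$ together forming a permutation matrix. In particular $A$ and $D$ carry the same number $k$ of ones and $B$ and $C$ each carry $n-k$; the extreme cases $k=n$ (which forces $A=D=J_n$, so $v=w_{\square}$, the origin of $\textrm{Up}_n$) and $k=0$ (where $C=I_n$ is forced and $B$ is an arbitrary $n\times n$ permutation matrix, $w_{\rm up}$ occurring here) are instructive.

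The bijection will send such a $v$ to an ordered list of $n-k$ chains, read off the blocks: the $n-k$ ones of $B$, taken in the order of their rows, index the chains in order; attached to a given one of $B$, the unbarred part $U_i$ of the corresponding chain is recovered from the linked column of $C$ together with the ``above-diagonal'' ones of $A$, and the barred part $V_i$ dually from the linked row of $D$ together with columns of $B$. The crucial point is that the ``on or below the antidiagonal'' constraint on $A$ and the ``weakly upper triangular'' constraint on $C$ are exactly what forces $\max U_i\le\min V_i$ for each chain $(U_i,V_i)$, while $A,C$ (resp.\ $D,B$) being partial permutation matrices forces the $U_i$ (resp.\ $V_i$) to be pairwise disjoint. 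To prove bijectivity I would write down the inverse explicitly --- from an ordered list of disjoint chains, the order and the $V_i$ reconstruct $B$, the $U_i$ reconstruct $C$ and the inversion pattern of $A$, and then $A$ and $D$ are filled by the unique antidiagonal placements compatible with the rows and columns still free --- and check that the two maps are mutually inverse with images in the appropriate sets.

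The main obstacle is the bookkeeping in this last step. Away from the two extremes --- $k=n$ (no chains) and $k=0$ (the $n$ chains are the rungs $\{i,\bar i\}$ of the ladder $P(n)$, permuted by $B$) --- a general chain can contain several unbarred and several barred elements, so its data is not simply the rows and columns of $A$; the ``extra'' elements of the chains are distributed by the combinatorics of $A$, $C$ and the linking done by $B$, and one must verify both that the triangularity constraints make the placements inside $A,C,D$ uniquely recoverable from the chain data and, conversely, that every legal $(A,B,C,D)$ is obtained. I expect to do this either by induction on $n$ (deleting the element $1$ together with the chain containing it, or deleting the last row and column of the block form) or by a direct argument organized around the first and last step of each chain. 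A cleaner-looking equivalent, in the spirit of Remark~\ref{rmk:sjo} and \cite{sjo}, would be to reinterpret the rook-placement model above as placements on a skew board and recognize $G(n)$ as indexing precisely those placements.
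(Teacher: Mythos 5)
Your first half is correct and is essentially the paper's set-up, carried out in more detail than the paper itself: the paper also replaces the interval $[w_{\rm up},w_\square]$ by a rook-placement model and then bijects with $G(n)$. Your criterion $v\ge w_{\rm up}\iff v^{-1}(n+t)\le 2n+1-t$ for $t\in[n]$ is right, and the resulting block description of $P(v)=\left(\begin{smallmatrix}A&B\\C&D\end{smallmatrix}\right)$ ($A$ on or below its antidiagonal, $D$ on or above, $C$ weakly on one side of its diagonal, $B$ unconstrained) is the same board as the one the paper draws, up to a reflection of conventions. So far, so good.

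The gap is in the second half, which is the actual content of the lemma: the bijection with $G(n)$ is never pinned down. As written, your rule is ambiguous --- the ones of $B$ are asked simultaneously to index and order the chains and to contribute barred elements (``the barred part $V_i$ \dots\ from the linked row of $D$ together with columns of $B$''), and it is not specified how a chain with several unbarred and several barred elements is read off from $A$, $C$, $D$; you yourself defer well-definedness, unique recoverability and surjectivity to ``bookkeeping'' to be settled later by induction or a case analysis, and the Sj\"ostrand reformulation you mention identifies the board but not the chain structure. The paper closes exactly this gap with one concrete device: label the $2n$ rows and $2n$ columns of the board by the elements of $\widetilde P(n)=\{1,\dots,n+1,\bar0,\dots,\bar n\}$, and for every rook \emph{outside} the unconstrained quadrant merge its row label with its column label; because each such rook joins a smaller label to a larger one, and each label meets at most one such row and one such column, the finest partition so obtained is automatically a disjoint union of chains in $\widetilde P(n)$. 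Discarding the two chains through $n+1$ and $\bar0$ leaves an element $g$ of disjoint chains of $P(n)$ with both barred and unbarred entries, and the $|g|$ rooks in the free quadrant, read as a permutation on the unused rows and columns, give the ordering; the inverse map is then immediate. Until you supply a rule of this precision (or actually carry out the induction you allude to), your argument establishes the board model but not the bijection, so the proof is incomplete at its key step.
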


\begin{proof}
According to Proposition~\ref{prop:stratification} (3), $y(n)$
is the number of permutations in the Bruhat interval $[w_{\rm up},w_\square]$.
As before, the matrices of these permutations can be interpreted
as the solutions to a rook placement problem on a certain board.
The disallowed cells are those $(i,j)$
where $|i-(2n+1-j)|>n$, enforcing the upper bound of the interval, and
those where $i+(2n+1-j)\leq n$, enforcing the lower bound.
Figure~\ref{fig:Genocchi rook} depicts an example.

\begin{figure}
\includegraphics{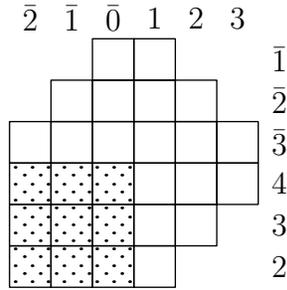}
\caption{A board for the rook placement problem for $y(3)$.}\label{fig:Genocchi rook}
\end{figure}

%

Given a rook placement on such a board, we bijectively construct an
element of~$G(n)$.  Label the rows and columns of the board with elements of $\widetilde P(n)$:
the rows get labels
$n, \ldots, 1,\linebreak[0] \bar 0, \ldots, \overline{n-1}$ right to left,
and the columns $\bar 1, \ldots, \bar n,\linebreak[0] n+1, \ldots, 2$ top to bottom,
again as depicted in Figure~\ref{fig:Genocchi rook}.  
Now construct the finest partition of $\widetilde P(n)$ such that,
for every rook not in the lower-left (speckled) quadrant
of the board, its row and column indices lie in the same part.  
Because the column label exceeds the row label for each of these rooks,
this gives a partition of $\widetilde P(n)$ into chains.  
Discarding the chains containing $n+1$ and $\bar 0$ leaves a set $g$
of disjoint chains on $P(n)$, all containing both barred and unbarred elements.
The number of rooks outside the lower-left quadrant comprising these chains
is $2n-|g|$; that is, $|g|$ rooks remain in the lower-left quadrant,
placed unrestrictedly in the unoccupied rows and columns.  
Reading these rooks as a permutation determines an order
on the chains of~$g$, and $g$ together with this order
gives an element of~$G(n)$.
\end{proof}

\begin{proof}[Proof of Proposition~\ref{prop:Genocchi}]
Using Lemma~\ref{lem:Genocchi}, what is left to show
is that the numbers $|G(n)|$ are the
median Genocchi numbers.
We will prove that they satisfy the Seidel triangle recurrence:
see for instance \cite[\S2]{EhrSte}.
Define the numbers $s_{k,i}$ for $k\geq 0$, $1\leq i\leq (k+3)/2$ by
$s_{0,1}=0$, and otherwise
\begin{align*}
s_{2k-1,i} &= s_{2k-2,i}+s_{2k-1,i-1},\\
s_{2k,i} &= s_{2k-1,i}+s_{2k,i+1},
\end{align*}
where summands $s$ with out-of-range indices should be interpreted as~0.
Then $s_{2n,1}$ is the $n$th median Genocchi number.
The first $s_{k,i}$ are tabulated below.
\begin{center}
\begin{tabular}{l|rrrrrrr}
$i$ \textbackslash\ $k$ & 0 & 1 & 2 & 3 & 4 & 5 & 6 \\\hline
1 & \textbf{1} & 1 & \textbf{2} & 2 & \textbf{8} & 8 & \textbf{56} \\
2 & & 1 & 1 & 3 & 6 & 14 & 48 \\
3 & & & & 3 & 3 & 17 & 34 \\
4 & & & & & & 17 & 17
\end{tabular}
\end{center}

In fact, we will show that the entry $s_{2n,i}$ of the Seidel triangle
is the number of elements of $G(n)$ such that the unbarred
elements $1, \ldots, i-1$ are each contained in a chain.  For instance, of the 8
elements of $G(2)$, 6 of them have a chain containing 1, and 3 of them
have both a chain containing 1 and a chain containing 2 (possibly the
same chain).  Equivalently, $s_{2n-1,i}$ will count the elements
of~$G(n)$ such that $1,\ldots,i-1$ are each contained in a chain,
but (if $i\leq n$) $i$ is not.

We exhibit a bijection between $G(n+1)$ and
\[\raisebox{-0.375\baselineskip}{\Big\{}
(g, i, j) : \begin{minipage}[t]{24em}
$g\in G(n), 1 \leq i \leq n+2, 1 \leq j \leq i$,  and in $g$\\
the elements $1, \ldots, j-1$ are each contained in a chain\end{minipage}
\raisebox{-0.375\baselineskip}{\Big\}},\]
such that if $g'$ is put in bijection with $(g, i, j)$,
then in $g'$ the elements $1, \ldots, i-1$ 
are contained in a chain, but $i$ is not.
(The condition on~$i$ is vacuous if $i = n+2$.)
This corresponds to the recurrence among the numbers $s_{2k,i}$:
indeed, $g'$ is one of the elements counted by $s_{2n+1,i}$, 
while $g$ is one of those counted by $s_{2n,j}$, so the bijection
realizes $s_{2n+1,i}$ as the partial sum of the $s_{2n,j}$ for $j\leq i$.

Given $g'$,  we construct $(g, i, j)$.  First, $i$ is determined by $g'$
as above: it's the least unbarred index not contained in a chain of
$g'$, or $n+2$ otherwise.

Let $h$ be obtained from $g'$ by deleting $i$ and $\bar i$ 
(or $n+1$ and $\overline{n+1}$ if $i = n+2$), and then decrementing
all labels greater than $i$, barred and unbarred, so that $h$ is
an ordered list of chains on~$P(n)$.
We break the construction of~$g$ and~$j$ into several cases for the purpose
of sketching the reverse bijection hereafter.

\begin{description}
\item[Case 1] If $i < n+2$ and $\bar i$ is contained in no chain in $g'$,
then we let $g = h$ and $j = i$.
\end{description}

Otherwise there is a chain $C$ in $g'$ which contains $\bar i$ 
(if $i < n+2$)  or $\overline{n+1}$ (if $i = n+2$).

\begin{description}
\item[Case 2] If $C$ has more than one barred element, then we let $g = h$
and $j$ be the greatest unbarred element of $C$.
\item[Case 3] If $C$ has only one barred element but is the last chain in the
ordered list $g'$,  then we let $g$ be $h$ with the chain that is the
remnant of $C$ deleted, and $j$  be the least (unbarred) element of $C$.
\end{description}

Otherwise, $C$ has only one barred element, and is followed in $g'$ by
another chain $D$.

\begin{description}
\item[Case 4] If the least element of $C$ is greater than the least element
of $D$, let $g$ be $h$ with the remnant of $C$ deleted, and $j$ be the
greatest unbarred element of $D$.
\item[Case 5] If the least element of $C$ is less than the least element of $D$
(call this $d$), then let $g$  be $h$ with all the elements of $C$ less
than $d$ inserted into the image of $D$ and the remainder of $C$ deleted,
and let $j$  be the greatest element of $C$ less than $d$.
\end{description}

We describe the inverse of this bijection more summarily. 
Let $(g,i,j)$ be in the image of the bijection.
Let $h'$ be the list of chains of $P(n+1)$
obtained from~$g$ by incrementing the labels in~$g$ 
that are greater than or equal to~$i$, whether barred or not.
Then $g'$ will equal either $h'$ or a small modification thereof.
If $j = i$ and $i<n+2$, we are in Case~1 above,
and $g'=h'$.  Otherwise, if
$j$ is not in a chain of $g$, including if $j=n+2$, we are in Case~3,
and $g'$ is obtained from $h'$ by appending the chain 
containing $\bar i$ and all unused unbarred labels strictly less than~$i$.
If $j$ is in a chain $D$ of
$g$ but is not the greatest unbarred element thereof, we are in Case~5,
and $g'$ is obtained from $h'$ by removing the unbarred elements
less than or equal to~$j$ from~$D$, and inserting before~$D$ a new chain
containing $\bar i$, these removed elements, 
and any unused unbarred labels strictly less than~$i$.
If $j$ is the greatest unbarred element of a chain $D$ of~$g$, we are in
Case~2 if every element $1, \ldots, i-1$ is in some chain of~$g$,
in which case $g'$ is obtained from $h'$ by inserting $\bar i$ into~$D$.
Otherwise we are in Case~4, and $g'$ is obtained from $h'$ by
inserting before~$D$ the chain 
containing $\bar i$ and all unused unbarred labels strictly less than~$i$.
\end{proof}




\subsection*{Acknowledgements}
We thank Allen Knutson for pointing out the connection 
between symmetric matrices and the symplectic Grassmannian,
and an anonymous referee for several valuable suggestions. In particular, Proposition \ref{prop:definedbyinclusions} and all connections to Gasharov and Reiner's work \cite{GasharovReiner} appears because of the referee's comments.  
Seth Sullivant was partially supported by the David and Lucille Packard 
Foundation and the US National Science Foundation (DMS 0954865). 

\bibliographystyle{amsalpha}      
\bibliography{SchubertsAndGaussiansBib}

\def\cprime{$'$} \def\cprime{$'$} \def\cprime{$'$} \def\cprime{$'$}
  \def\cprime{$'$} \def\cprime{$'$}
\providecommand{\bysame}{\leavevmode\hbox to3em{\hrulefill}\thinspace}
\providecommand{\MR}{\relax\ifhmode\unskip\space\fi MR }
\providecommand{\MRhref}[2]{%
  \href{http://www.ams.org/mathscinet-getitem?mr=#1}{#2}
}
\providecommand{\href}[2]{#2}
\begin{thebibliography}{STD10}

\bibitem[BK05]{BrionKumar}
Michel Brion and Shrawan Kumar, \emph{Frobenius splitting methods in geometry
  and representation theory}, Progress in Mathematics, vol. 231, Birkh\"auser
  Boston, Inc., Boston, MA, 2005. \MR{2107324 (2005k:14104)}

\bibitem[Bri05]{BrionNotes}
Michel Brion, \emph{Lectures on the geometry of flag varieties}, Topics in
  cohomological studies of algebraic varieties, Trends Math., Birkh\"auser,
  Basel, 2005, pp.~33--85. \MR{2143072 (2006f:14058)}

\bibitem[BW93]{BeckerWeispfenning}
Thomas Becker and Volker Weispfenning, \emph{Gr\"obner bases}, Graduate Texts
  in Mathematics, vol. 141, Springer-Verlag, New York, 1993, A computational
  approach to commutative algebra, In cooperation with Heinz Kredel.
  \MR{1213453}

\bibitem[Din01]{ding}
Kequan Ding, \emph{Rook placements and classification of partition varieties
  {$B\backslash M_\lambda$}}, Commun. Contemp. Math. \textbf{3} (2001), no.~4,
  495--500. \MR{1869101}

\bibitem[DSS09]{Drton2009}
Mathias Drton, Bernd Sturmfels, and Seth Sullivant, \emph{Lectures on algebraic
  statistics}, Oberwolfach Seminars, vol.~39, Birkh\"auser Verlag, Basel, 2009.
  \MR{2723140 (2012d:62004)}

\bibitem[EL96]{eriksson}
Kimmo Eriksson and Svante Linusson, \emph{Combinatorics of {F}ulton's essential
  set}, Duke Math. J. \textbf{85} (1996), no.~1, 61--76. \MR{1412437}

\bibitem[ES00]{EhrSte}
Richard Ehrenborg and Einar Steingr{\'{\i}}msson, \emph{Yet another triangle
  for the {G}enocchi numbers}, European J. Combin. \textbf{21} (2000), no.~5,
  593--600. \MR{1771988 (2001h:05008)}

\bibitem[Ful92]{Fulton}
William Fulton, \emph{Flags, {S}chubert polynomials, degeneracy loci, and
  determinantal formulas}, Duke Math. J. \textbf{65} (1992), no.~3, 381--420.
  \MR{1154177 (93e:14007)}

\bibitem[Ful97]{YoungTableaux}
\bysame, \emph{Young tableaux}, London Mathematical Society Student Texts,
  vol.~35, Cambridge University Press, Cambridge, 1997, With applications to
  representation theory and geometry. \MR{1464693 (99f:05119)}

\bibitem[FZ99]{fominZelevinsky}
Sergey Fomin and Andrei Zelevinsky, \emph{Double {B}ruhat cells and total
  positivity}, J. Amer. Math. Soc. \textbf{12} (1999), no.~2, 335--380.
  \MR{1652878}

\bibitem[GR02]{GasharovReiner}
V.~Gasharov and V.~Reiner, \emph{Cohomology of smooth {S}chubert varieties in
  partial flag manifolds}, J. London Math. Soc. (2) \textbf{66} (2002), no.~3,
  550--562. \MR{1934291}

\bibitem[KLS13]{KLS-positroid}
Allen Knutson, Thomas Lam, and David~E. Speyer, \emph{Positroid varieties:
  juggling and geometry}, Compos. Math. \textbf{149} (2013), no.~10,
  1710--1752. \MR{3123307}

\bibitem[KLS14]{KLS}
\bysame, \emph{Projections of {R}ichardson varieties}, J. Reine Angew. Math.
  \textbf{687} (2014), 133--157. \MR{3176610}

\bibitem[KM05]{Knutson2005}
Allen Knutson and Ezra Miller, \emph{Gr\"obner geometry of {S}chubert
  polynomials}, Ann. of Math. (2) \textbf{161} (2005), no.~3, 1245--1318.
  \MR{2180402 (2006i:05177)}

\bibitem[Knu09]{Knutson2009}
Allen Knutson, \emph{Frobenius splitting, point-counting, and degeneration},
  2009.

\bibitem[LR08]{StdMonTheory}
Venkatramani Lakshmibai and Komaranapuram~N. Raghavan, \emph{Standard monomial
  theory}, Encyclopaedia of Mathematical Sciences, vol. 137, Springer-Verlag,
  Berlin, 2008, Invariant theoretic approach, Invariant Theory and Algebraic
  Transformation Groups, 8. \MR{2388163 (2008m:14095)}

\bibitem[Lus94]{lusztig}
G.~Lusztig, \emph{Total positivity in reductive groups}, Lie theory and
  geometry, Progr. Math., vol. 123, Birkh\"auser Boston, Boston, MA, 1994,
  pp.~531--568. \MR{1327548}

\bibitem[Sj{\"o}07]{sjo}
Jonas Sj{\"o}strand, \emph{Bruhat intervals as rooks on skew {F}errers boards},
  J. Combin. Theory Ser. A \textbf{114} (2007), no.~7, 1182--1198. \MR{2353118}

\bibitem[Sta12]{Stanley2012}
Richard~P. Stanley, \emph{Enumerative combinatorics. {V}olume 1}, second ed.,
  Cambridge Studies in Advanced Mathematics, vol.~49, Cambridge University
  Press, Cambridge, 2012. \MR{2868112}

\bibitem[STD10]{Sullivant2010}
Seth Sullivant, Kelli Talaska, and Jan Draisma, \emph{Trek separation for
  {G}aussian graphical models}, Ann. Statist. \textbf{38} (2010), no.~3,
  1665--1685. \MR{2662356 (2011f:62076)}

\bibitem[Sul09]{Sullivant2009}
Seth Sullivant, \emph{Gaussian conditional independence relations have no
  finite complete characterization}, J. Pure Appl. Algebra \textbf{213} (2009),
  no.~8, 1502--1506. \MR{2517987 (2010k:13045)}

\bibitem[UW13]{Woo}
Henning Ulfarsson and Alexander Woo, \emph{Which {S}chubert varieties are local
  complete intersections?}, Proc. Lond. Math. Soc. (3) \textbf{107} (2013),
  no.~5, 1004--1052. \MR{3126390}

\bibitem[WY08]{WooYongSingularities}
Alexander Woo and Alexander Yong, \emph{Governing singularities of {S}chubert
  varieties}, J. Algebra \textbf{320} (2008), no.~2, 495--520. \MR{2422304
  (2009j:14063)}

\bibitem[WY12]{WooYongGrobner}
\bysame, \emph{A {G}r\"obner basis for {K}azhdan-{L}usztig ideals}, Amer. J.
  Math. \textbf{134} (2012), no.~4, 1089--1137. \MR{2956258}

\end{thebibliography}


%
%

\end{document}